

\documentclass[11pt]{amsart}
\usepackage{amssymb,amsmath,eucal,amscd,amsthm,hyperref}
\usepackage{paralist}


\newtheorem{theorem}{Theorem}[section]

\newtheorem{lemma}[theorem]{Lemma}

\newtheorem{corollary}[theorem]{Corollary}
\newtheorem{conjecture}[theorem]{Conjecture}
\newtheorem{question}[theorem]{Question}

\theoremstyle{definition}
\newtheorem{definition}[theorem]{Definition}

\theoremstyle{remark}
\newtheorem{remark}[theorem]{Remark}

 \def\sl{\operatorname{SL}}  
\def\ll{\langle} \def\rr{\rangle}
 \def\a{\alpha}   \def\bp{\begin{pmatrix}}
 \def\ep{\end{pmatrix}}
\def\bn{\begin{enumerate}}
\def\bnc{\begin{compactitem}} \def\enc{\end{compactitem}}

   \def\en{\end{enumerate}}
\def\ba{\begin{array}} \def\ea{\end{array}}  \def\Si{\Sigma}  \def\a{\alpha}

\def\ker{\operatorname{Ker}}\def\be{\begin{equation}} \def\ee{\end{equation}}

\def\co{\colon} 
\def\PP{\mathcal{P}}
\def\Sol{\mathrm{Sol}}
\def\Nil{\mathrm{Nil}}
\newcommand{\C}{\mathbb{C}}

\newcommand{\F}{\mathbb{F}}
\newcommand{\N}{\mathbb{N}}

\newcommand{\R}{\mathbb{R}}
\newcommand{\Z}{\mathbb{Z}}


\title[Decision problems for $3$-manifolds]{Decision problems for $3$-manifolds and their fundamental groups}
\author{Matthias Aschenbrenner}
\address{University of California, Los Angeles, California, USA}
\email{matthias@math.ucla.edu}

\author{Stefan Friedl}
\address{Fakult\"at f\"ur Mathematik\\ Universit\"at Regensburg\\   Germany}
\email{sfriedl@gmail.com}

\author{Henry Wilton}
\address{DPMMS\\ CMS\\ Wilberforce Road\\ Cambridge\\ CB3 0WB\\ UK}
\email{h.wilton@maths.cam.ac.uk}

\begin{document}

\maketitle

\begin{abstract}
We survey the status of some decision problems for $3$-mani\-folds and their fundamental groups.
This includes the classical decision problems for finitely presented groups (Word Problem, Conjugacy Problem, Isomorphism Problem), and also the Homeomorphism Problem for $3$-manifolds and the Membership Problem for
$3$-manifold groups.
\end{abstract}

\section*{Introduction}

\noindent
The classical group-theoretic decision problems were formulated by Max Dehn in his work on the topology of surfaces \cite{dehn_unendliche_1911} about a century ago.  He considered the following questions about finite presentations $\langle A\mid R\rangle$ for a group $\pi$:
\bnc
\item[1.] the {\it Word Problem,}\/ which asks for an algorithm to determine whether  a word on the generators $A$ represents the identity element of~$\pi$;
\item[2.] the {\it Conjugacy Problem,}\/ which asks for an algorithm to determine whether  two words on the generators $A$ represent conjugate elements of~$\pi$;
\item[3.]  the {\it Isomorphism Problem,}\/ which asks for an algorithm to determine whether  two given finite presentations represent isomorphic groups.
\enc
Viewing $\pi$ as the fundamental group of a topological space (represented as a simplicial complex, say), these questions can be thought of as asking for algorithms to determine whether a given loop is null-homotopic, whether a given pair of loops is freely homotopic, and whether two aspherical spaces are homotopy-equivalent, respectively.  We   add some further questions that arise naturally:
\bnc
\item[4.] the {\it Homeomorphism Problem,}\/ which asks for an algorithm to determine whether two given triangulated manifolds are homeomorphic;  
\item[5.] the {\it Membership Problem,}\/ where the goal is to determine whether a given element of a group lies in a specified subgroup.
\enc
Since the  1950s, it has been known that problems (1.)--(5.) have negative answers in the generality in which they  have been formulated above. A finitely presented group with undecidable  Conjugacy Problem was first constructed in 1954 by
P.~S.~Novikov~\cite{No54}, and soon thereafter he \cite{No55} and W.~Boone \cite{boone} independently found 
a finitely presented group with undecidable Word Problem. 
(See  \cite{stillwell} for an exposition.)
Hence Problem~(5.), being a generalization of (1.),
is also undecidable. 
Similarly, Problem~(3.) is undecidable, since Adyan~\cite{adyan57a, adyan57b} and Rabin~\cite{rabin} showed that
there is no algorithm for the more restrictive problem of determining whether a given finite presentation describes the trivial group.
Problem (4.) is undecidable even if we restrict to smooth manifolds
of the same given dimension~$\geq 4$; this was shown by Mar\-kov~\cite{markov58}, as a corollary of the unsolvability of~(3.).

In contrast to this, in this paper we show that all these problems can now be solved for compact $3$-manifolds and their fundamental groups, with the caveats that in (3.)~we restrict ourselves to closed, orientable $3$-manifolds, and in (4.)~we restrict ourselves to orientable, irreducible $3$-manifolds.  Contrary to common perception (see, e.g., \cite[Section~7.1]{poonen})
 the Homeomorphism Problem for $3$-manifolds is still open for reducible $3$-manifolds. We  discuss the status of the Homeomorphism Problem in detail in Section~\ref{section:homeomorphismproblem}.

The solutions to the first four problems, with the aforementioned caveats, have been known to the experts for a while, and as is to be expected, they all rely on the Geometrization Theorem.
An algorithm for solving the  Membership Problem was recently given in \cite{FW14},
and we will provide a summary of the main ideas in this paper. The solution to the Membership Problem requires not only the Geometrization Theorem
 but also the Tameness Theorem of Agol~\cite{Ag04} and Calegari--Gabai~\cite{CG06},  the 
  Virtually Compact Special Theorem
 due to Agol~\cite{Ag13} and Wise~\cite{Wi09,Wi12a,Wi12b}, as well as a result of  Kapovich--Miasnikov--Weidmann~\cite{kapovich_foldings_2005}.

 Once a decision problem has been shown to be solvable, a natural next question concerns its complexity, and its
  implementability.
The complexity of decision problems around $3$-manifolds is a fascinating topic which
is not touched upon in the present paper, except for a few references to the literature here and there.
Practical implementations of algorithms for $3$-manifolds are discussed, e.g., in \cite{Mat03}.

The paper is organized as follows. In Section~\ref{section:decisionproblems} we   recall the precise statement of the (uniform) decision problems.
In Section~\ref{section:basicdefinitions} we   collect 
some basic definitions in group theory and $3$-manifold topology.
Then, we    recall in Section~\ref{section:3manifolds} some of the key results in the study of $3$-manifolds which   appear time and again in the solutions to the decision problems. Finally in Section~\ref{section:solutions} we  show that the five aforementioned problems are indeed solvable for (most) $3$-manifolds and their fundamental groups. We conclude with a list of open problems in Section~\ref{sec:open problems}.

A final remark: the paper is written for group theorists and low-dimen\-sional topologists alike. We hope that we succeeded in striking a balance between giving an exposition at an appropriate level of precision and conveying the main ideas behind the arguments. However, in order to make the paper more readable we refrained from giving some arguments in full detail.

\subsection*{Convention.} All $3$-manifolds are assumed to be compact
 and connected. All surfaces in a $3$-manifold are  compact and properly embedded. 

\subsection*{Acknowledgment.}
The first author was partially supported by NSF grant DMS-0969642. The second author is grateful to the organizers of the conference `Interactions between low dimensional topology and mapping class groups' at the Max-Planck Institute for Mathematics in Bonn for giving him an opportunity to speak. The second author was supported by the CRC `higher invariants' at the University of Regensburg. The third author was partially supported by an EPSRC Career Acceleration Fellowship.
We wish to thank Benjamin Burton, David Futer and Greg Kuperberg for helpful comments. We are also very grateful for the detailed feedback provided by the referees.

\section{Decision Problems}\label{section:decisionproblems}

\noindent
To formulate group-theoretic decisions problems such as those from the introduction rigorously (and thus, to be able to prove their  undecidability in general),
one needs to make precise the informal notion of ``algorithm.''

\subsection{Turing machines}
Several mathematically rigorous definitions of algorithm were proposed in the 1930s, one of them being the notion of a {\it Turing machine,} defined by Turing in~\cite{turing}.
They were subsequently shown to all be equivalent  in a natural sense. This is usually seen as evidence for the {\it Church-Turing Thesis,}\/ 
which asserts that {\it every}\/ effective computation (be it on an abacus or a modern-day computer) can be carried out by a Turing machine. Hence
in practice, to show  the (mechanical) {\it decidability}\/ of a mathematical question, one usually just sketches an algorithm {\it informally}\/ and then  appeals to the Church-Turing Thesis
to ascertain that it could  in principle be translated into an actual Turing machine if so desired.
In contrast, to prove {\it undecidability}\/ of a given problem, one needs to resort to the precise definition of one's preferred model of computation. Since in this paper, we are mainly concerned with
decidability results, below we only give a rough description of the structure and workings of a Turing machine; for a detailed and mathematically rigorous discussion see, e.g.,~\cite{HU}.

A {\it Turing machine}\/ is a hypothetical device consisting of a finite-length program as well as
\begin{enumerate}
\item an infinite memory, usually visualized as a {\it tape}\/ divided into infinitely many {\it cells,}\/ sequentially ordered, where each cell can store exactly one symbol, either $0$ or $1$, or be blank;
\item a {\it head}\/ that can read, write and delete
 symbols on the tape and move the tape left and right one (and only one) cell at a time; and 
\item a {\it state register}\/ that stores the current state of the Turing machine, one of finitely many. 
\end{enumerate}
The Turing machine is provided with some {\it input}\/ on the tape, with all but finitely many of the cells of the tape being initially blank.
It then proceeds to modify this input according to its predetermined program, until it reaches a distinguished {\it terminal state.}\/ If the machine 
never reaches this state, the computation will go on forever; but if it does (we say: if the Turing machine {\it terminates}\/), then the non-blank
cells left on the tape are called the {\it output}\/ of the Turing machine.

Turing machines can be viewed as accepting other objects  as input and computing other objects as output, provided an encoding of these objects into finitely many finite strings of $0$s and $1$s has been chosen; these strings can then be written onto the tape of the Turing machine, separated by blanks, to provide the input for the machine. For example, 
Turing machines can work with words in the usual Latin alphabet, since its letters   can be encoded by the binary representations of their ASCII codes. 
In fact, we may just as well use any countable, potentially infinite alphabet.
More relevant for this paper, a finite presentation $\langle A\,|\,R\rangle$ of a group can
be fed into a Turing machine by listing first an encoding of the elements of the generating set $A$   in binary representation, followed by  the words in $R$, again suitably encoded into natural numbers via binary representation, and by separating these binary strings  by blanks. Similarly one can also treat other finite mathematical objects, like finite simplicial complexes.
Indeed,  the (finite) data describing a Turing machine itself can be encoded (by a natural number) and thus be fed as an input to a Turing machine. A {\it universal Turing machine}\/ is one that takes as input such an encoding of a Turing machine $T$, together with some input  $s$, and then simulates the computation of $T$ with input $s$. In essence (except for the lack of infinitely large memory and the possibility of computational errors due to hardware failure), modern computers are implementations of universal Turing machines; cf.~\cite{universalTM}.

A {\it decision problem}\/ is a subset of the set of possible inputs for a Turing machine. One says that a Turing machine {\it solves}\/ a decision problem $S$ if it terminates for each possible input, {\it and}\/ if it outputs ``YES'' if the input belonged to $S$, and ``NO'' otherwise.
A decision problem is {\it solvable}\/ if there exists a Turing machine solving it.
Since there are only countably many Turing machines, but uncountable many decision problems, it is clear that there are many unsolvable decision problems.
Using the concept of universal Turing machine, Turing~\cite{turing} first  exhibited a particular unsolvable decision problem, the {\it Halting Problem}\/: there is no Turing machine which takes as input an encoding of a Turing machine $T$
  and decides whether $T$ halts upon input of the empty tape (with all cells blank).
See \cite{poonen} for a survey of many naturally occurring decision problems in mathematics which turned out to be undecidable.

A set $S$ of possible outputs of a Turing machine is called {\it recursively enumerable}\/ if 
there exists a Turing machine, possibly non-terminating, that prints the elements of $S$, separated by blanks, onto its tape. Here we allow the possibility of repetitions, but it is easy to see that this notion of ``recursively enumerable'' does not change if we insist that the Turing machine writes each element of $S$ exactly once. 

\subsection{Group-theoretical decision problems} 
 We refer to \cite{miller_iii_decision_1992} for an authoritative survey of decision problems in group theory.
These come in two flavors. The Word Problem, the Conjugacy Problem and the Subgroup Membership Problem are all \emph{local}, in the sense that they concern the relations of collections of elements of a fixed given finitely presented group.  In contrast, the Isomorphism Problem is an example of  a \emph{global} decision problem, in the sense that it concerns relations between groups themselves.

\subsubsection{Local decision problems}
We start with the local problems that we will consider.
Fix a countable infinite alphabet $\mathcal{A}$ and a subset $A\subseteq\mathcal{A}$; below, $A$ usually is finite.
All  presentations of groups considered below are assumed to have the form $\ll A\mid R\rr$ where $A\subseteq\mathcal{A}$. By
a {\it word}\/ on $A$ we mean an element of 
the free group~$F(A)$ generated by~$A$; that is,
a formal expression of the form $a_1^{\epsilon_1} a_2^{\epsilon_2}\cdots a_n^{\epsilon_n}$
where $a_1,\dots,a_n\in A$ and $\epsilon_1,\dots,\epsilon_n\in\{\pm 1\}$.
If $A$ is a generating set for a group $\pi$, then there is a surjective group morphism $F(A)\to \pi$ with $a\mapsto a$ for each $a\in A$;
given an element $g$ of $\pi$, every word $w\in F(A)$ which maps to $g$ under this morphism is said to {\it represent}\/ the element $g$ of $\pi$.

\medskip
\noindent
We now give  more  precise formulations of Dehn's decidability questions stated in the beginning of the introduction.
Throughout, we assume that a fixed encoding of finite presentations $\ll A\mid R\rr$ of groups and of finite collections of words from $F(\mathcal{A})$ as inputs for Turing machines has been chosen once and for all.

\begin{definition}
Let $\pi$ be a group  with finite generating set $A$.
\begin{enumerate}
\item  The \emph{Word Problem} in $\pi$ asks for an algorithm
  that takes as input a  word $w$ on $A$ and determines whether  $w$ represents the identity element of $\pi$. Note that whether the Word Problem for $\pi$ is solvable does not depend on the choice of the (finite) generating set~$A$ for $\pi$.
     \item    The \emph{Conjugacy Problem} in $\pi$ asks for an algorithm that takes as input two  words $w_1,w_2\in F(A)$ and determines whether   $w_1$ and $w_2$ represent conjugate elements of $\pi$.  Again, this notion is invariant under changing the (finite) generating set $A$.
\item   The \emph{Membership Problem} in $\pi$ asks for an algorithm that takes as input words $v,w_1,\ldots, w_k$ on $A$ and determines whether  $v$ represents an element of the subgroup $\langle g_1,\ldots,g_k\rangle$ of $\pi$ generated by the elements $g_1,\dots,g_k$ represented by $w_1,\dots,w_k$, respectively.
\end{enumerate}
\end{definition}

\noindent
These `local' decision problems also have \emph{uniform} analogues:

\begin{definition}
Let $\mathcal{G}$ be a class of finitely presentable groups. 
\begin{enumerate}
\item  The \emph{Word Problem} in $\mathcal{G}$ asks for an algorithm
  that takes as input a pair  consisting of a presentation $\ll A\mid R\rr$ of a group $\pi$ in $\mathcal G$ and a  word $w\in F(A)$ and determines whether   $w$ represents the identity element of $\pi$.  
     \item    The \emph{Conjugacy Problem} in $\mathcal{G}$ asks for an algorithm that takes as input 
     a presentation $\ll A\mid R\rr$ of a group $\pi$ in $\mathcal G$ and
     two  words $w_1,w_2\in F(A)$ and determines whether   $w_1$ and $w_2$ represent conjugate elements of $\pi$. 
\item   The \emph{Membership Problem} in $\mathcal G$ asks for an algorithm that takes as input 
a presentation $\ll A\mid R\rr$ of a group $\pi$ in $\mathcal G$ and words $v,w_1,\ldots, w_k\in F(A)$, and determines whether  $v$ represents an element of the subgroup $\langle g_1,\ldots,g_k\rangle$ of $\pi$ generated by the elements $g_1,\dots,g_k$ of $\pi$ represented by $w_1,\dots,w_k$, respectively.
\end{enumerate}
We say that the Word Problem is {\it uniformly solvable}\/ in $\mathcal G$
if there is an algorithm which solves the Word Problem in $\mathcal G$; similarly for the other two decision problems.
\end{definition}

\noindent
Note that in the definition above, we do not assume about $\mathcal{G}$ that there is an algorithm to decide whether a given finite presentation describes a group in $\mathcal G$.
Although the class of $3$-manifold groups is not algorithmically recognizable (see Lemma~\ref{lem:cannot detect 3-mfd gps} below), for $3$-manifold groups, all the algorithms we consider in this paper turn out to be uniform, in the sense that there is a single algorithm that solves the problem in question for any $3$-manifold group, with its presentation  provided as additional input for the algorithm.

\subsubsection{Global decision problems}
The main global group-theoretic decision problem that we   consider is the
Isomorphism Problem:
Consider a class $\mathcal{G}$ of finitely presentable groups, closed under isomorphism.  The \emph{Isomorphism Problem} in $\mathcal G$ asks for an algorithm that takes as input finite subsets $A_1,A_2\subseteq\mathcal{A}$, $R_1\subseteq F(A_1)$ and $R_2\subseteq F(A_2)$ such that the groups $\pi_i:=\langle A_i\mid R_i\rangle$ are in $\mathcal{G}$, and determines whether   $\pi_1$ and $\pi_2$ are isomorphic.
The next lemma shows that if we have determined  that $\pi_1$ and $\pi_2$ are indeed isomorphic, then an isomorphism between them can be found algorithmically. Note that a group morphism $F(A)\to F(A')$ is uniquely specified by the images of the generators $a\in A$ of $F(A)$, and thus can be encoded as the output of a Turing machine.

\begin{lemma}\label{lem:findiso}
There exists an algorithm which, given finite presentations $\ll A\mid R\rr$ and $\ll A'\mid R'\rr$ of
isomorphic groups $\pi$, $\pi'$ as input, constructs a group morphism $F(A)\to F(A')$ which induces an isomorphism $\pi\to\pi'$.
\end{lemma}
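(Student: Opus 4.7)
The plan is to exploit semi-decidability: although we cannot in general decide whether a word represents the identity in a finitely presented group, we \emph{can} recursively enumerate the set of such words. Indeed, for a finite presentation $\ll A \mid R \rr$, the kernel of $F(A) \twoheadrightarrow \pi$ is the normal closure of $R$, and one Turing machine can list all products of the form $g_1 r_1^{\epsilon_1} g_1^{-1} \cdots g_n r_n^{\epsilon_n} g_n^{-1}$ with $g_i \in F(A)$, $r_i \in R$, $\epsilon_i \in \{\pm 1\}$, thereby printing out exactly the words in $F(A)$ that represent the identity in~$\pi$. The same applies to $\pi'$.

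Given this, I would have the algorithm systematically enumerate all pairs $(\varphi,\psi)$, where $\varphi \co F(A) \to F(A')$ and $\psi \co F(A') \to F(A)$ are group morphisms specified by choosing an image in $F(A')$ (resp.\ $F(A)$) for each generator in $A$ (resp.\ $A'$); there are only countably many such pairs, so they can be put into a list. For each pair $(\varphi,\psi)$, I would then launch, in dovetailed fashion, searches for certificates that
\begin{itemize}
\item $\varphi(r)$ lies in the normal closure of $R'$ in $F(A')$, for every $r \in R$;
\item $\psi(r')$ lies in the normal closure of $R$ in $F(A)$, for every $r' \in R'$;
\item $\psi(\varphi(a))\,a^{-1}$ lies in the normal closure of $R$ in $F(A)$, for every $a \in A$;
\item $\varphi(\psi(a'))\,(a')^{-1}$ lies in the normal closure of $R'$ in $F(A')$, for every $a' \in A'$.
\end{itemize}
Each of these finitely many conditions is semi-decidable by the preceding paragraph. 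As soon as all certificates for some pair $(\varphi,\psi)$ have been found, the algorithm halts and outputs~$\varphi$. The first two conditions guarantee that $\varphi$ and $\psi$ descend to well-defined morphisms $\overline{\varphi} \co \pi \to \pi'$ and $\overline{\psi} \co \pi' \to \pi$; the last two force $\overline{\psi}\circ\overline{\varphi} = \id_\pi$ and $\overline{\varphi}\circ\overline{\psi} = \id_{\pi'}$ on generators, hence $\overline{\varphi}$ is the desired isomorphism.

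It remains only to check termination. By hypothesis $\pi \cong \pi'$, so fix any isomorphism $\overline{\varphi}_0 \co \pi \to \pi'$ and its inverse $\overline{\psi}_0$. Choose lifts $\varphi_0, \psi_0$ by picking for each generator any word in the target free group representing the image of that generator; then $(\varphi_0, \psi_0)$ satisfies all four conditions, and the search for the corresponding certificates will eventually succeed. Since the outer enumeration over pairs is exhaustive, the algorithm eventually encounters a verifiable pair and terminates.

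The only subtle point is the bookkeeping required to make the parallel searches genuinely dovetail with the enumeration of pairs, so that no finite verification is starved forever; this is a standard construction and not a real obstacle. The substantive idea is the well-known move of replacing undecidable tests by recursive enumerations whenever one merely needs to wait for a positive answer to appear.
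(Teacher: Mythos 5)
Your proposal is correct and follows essentially the same route as the paper's own proof: both enumerate pairs of free-group morphisms together with the normal closures of $R$ and $R'$, and verify exactly the same four membership conditions (the paper's conditions (1) and (2)) by dovetailed search, with termination guaranteed by the existence of an isomorphism. Your write-up is merely more explicit about the semi-decidability and bookkeeping details.
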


\begin{proof}
We denote by $N$ the subgroup of $F(A)$ normally generated by $R$; thus $\pi\cong F(A)/N$.
Similarly we define $N'$.
Note that group morphisms $\varphi\co F(A) \to F(A')$ and $\varphi'\co F(A')\to F(A)$ gives rise to 
group morphisms $\pi\to \pi'$ and $\pi'\to\pi$ which are mutually inverse if and only if the following two conditions hold:
\bn
\item  $\varphi(g)\in N'$ for all $g\in R$  and $\varphi'(g')\in N$  for all $g'\in R'$, and
\item $\varphi'(\varphi(a))a^{-1}\in N$ for all $a\in A$ and  $\varphi(\varphi'(a'))(a')^{-1}\in N'$ for all $a'\in A'$.
\en
We now simultaneously enumerate all words in $N$ and $N'$ as well as all group morphisms $\varphi\colon F(A)\to F(A')$ and $\varphi'\colon F(A')\to F(A)$, and check whether conditions (1) and (2) are satisfied. By assumption there exists an isomorphism $\pi\to \pi'$, and
so after finitely many steps we will find a pair
 $\varphi$,~$\varphi'$ as above, satisfying (1) and (2).
 \end{proof}

\noindent
A similar argument as in the previous proof shows that we may let $\ll A\mid R\rr$  and  $\ll A'\mid R'\rr$  range over 
a recursively enumerable set of finite presentations:

\begin{lemma}\label{lem:findiso, rec enum}
Let $\mathcal{R}$, $\mathcal{R'}$  be recursively enumerable sets of finite presentations for groups, and suppose some group represented by an element of $\mathcal R$ is isomorphic to some group represented by an element of $\mathcal R'$. Then there
exists an algorithm which finds presentations $\ll A\mid R\rr$ in~$\mathcal R$ and 
$\ll A'\mid R'\rr$ in~$\mathcal R'$  for groups $\pi$ and $\pi'$, respectively, and also an isomorphism $\pi\to\pi'$.
\end{lemma}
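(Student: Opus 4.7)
The plan is to adapt the enumeration argument from Lemma~\ref{lem:findiso}, interleaving the enumerations of $\mathcal R$ and $\mathcal R'$ with the search already described there, by means of a standard dovetailing procedure. Since $\mathcal R$ and $\mathcal R'$ are recursively enumerable, we fix Turing machines $T$ and $T'$ which print out presentations $\ll A_1\mid R_1\rr,\ll A_2\mid R_2\rr,\dots$ in $\mathcal R$ and $\ll A'_1\mid R'_1\rr,\ll A'_2\mid R'_2\rr,\dots$ in $\mathcal R'$, respectively. Writing $\pi_i$ for the group presented by $\ll A_i\mid R_i\rr$ and similarly for $\pi'_j$, the hypothesis guarantees that there exist indices $i,j$ and an isomorphism $\pi_i\to\pi'_j$.

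The algorithm will run in stages. At stage $n$, I would first run $T$ and $T'$ for $n$ steps to obtain (partial) lists of presentations $\ll A_i\mid R_i\rr$ and $\ll A'_j\mid R'_j\rr$ produced so far. Then, for each pair $(i,j)$ of presentations generated by stage $n$, I would enumerate for $n$ steps the words in the normal closures $N_i\trianglelefteq F(A_i)$ of $R_i$ and $N'_j\trianglelefteq F(A'_j)$ of $R'_j$, together with all candidate group morphisms $\varphi\co F(A_i)\to F(A'_j)$ and $\varphi'\co F(A'_j)\to F(A_i)$ (each specified by their finite list of values on generators). For each such quadruple $(\varphi,\varphi',\text{partial list in }N_i,\text{partial list in }N'_j)$, I would check conditions (1) and (2) in the proof of Lemma~\ref{lem:findiso}, namely that $\varphi$ and $\varphi'$ descend to mutually inverse morphisms $\pi_i\leftrightarrows\pi'_j$. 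If the check succeeds, the algorithm halts and outputs $\ll A_i\mid R_i\rr$, $\ll A'_j\mid R'_j\rr$, and $\varphi$; otherwise it proceeds to stage $n+1$.

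Termination follows from the hypothesis: there exist specific indices $i_0,j_0$ and specific morphisms $\varphi_0,\varphi'_0$ realizing an isomorphism $\pi_{i_0}\to\pi'_{j_0}$ and its inverse, together with finitely many witnessing words in $N_{i_0}$ and $N'_{j_0}$ certifying conditions~(1) and~(2). All of these finite data will appear by some sufficiently large stage $n$, at which point the verification succeeds.

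The only mildly subtle point is that the $N_i$ and $N'_j$ are not given explicitly but must themselves be enumerated (words in the normal closure of a finite set in a free group can be listed recursively by forming all possible products of conjugates of relators and their inverses); once this is set up, the nested dovetailing is routine. No genuine obstacle arises because all the ``for-each'' quantifications over presentations, morphisms, and witness words are over recursively enumerable sets, and dovetailing converts any finite conjunction of such searches into a single terminating search whenever a witness exists.
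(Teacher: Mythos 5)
Your proposal is correct and follows essentially the same route as the paper: the paper's proof simply says to ``simultaneously enumerate'' all presentations in $\mathcal R$ and $\mathcal R'$, all elements of the relevant normal closures, and all candidate morphisms, checking conditions (1) and (2) from Lemma~\ref{lem:findiso}, which is exactly your dovetailing scheme spelled out in more detail. The extra care you take with the staged enumeration and the termination argument is a faithful elaboration of what the paper leaves implicit.
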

\begin{proof}
We simultaneously enumerate all presentations $\ll A\mid R\rr$  in $\mathcal R$ and $\ll A'\mid R'\rr$ in $\mathcal{R}'$,
all elements of the normal closure $N$ of $R$ in $F(A)$ and all elements of the normal closure $N'$ of $R'$ in $F(A')$, 
as well as all group morphisms $\varphi\colon F(A)\to F(A')$ and $\varphi'\colon F(A')\to F(A)$, and check whether conditions (1) and (2) in the proof of Lemma~\ref{lem:findiso} are satisfied.  
\end{proof}

\noindent
The following lemma is used in Section~\ref{sec:membership problem}.
 
\begin{lemma}\label{lem:finiteindexkernelcokernel}
There exists an algorithm which upon input of finite
presentations of groups $\Gamma$ and $\pi$, finitely many elements of $\pi$ generating
a finite index subgroup $\pi_0$ of $\pi$, and a group morphism $\varphi\colon\Gamma\to\pi$, produces a finite presentation for $\Gamma_0:=\varphi^{-1}(\pi_0)$ and a set of coset representatives for $\Gamma_0$ in~$\Gamma$.  \end{lemma}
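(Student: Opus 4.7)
The plan is to reduce the lemma to two classical effective constructions in combinatorial group theory: the Todd--Coxeter coset enumeration procedure and the Reidemeister--Schreier presentation algorithm. Both are known to be effective and to produce the stated output in finite time whenever the relevant subgroup has finite index in an ambient finitely presented group; no decidability assumption on the ambient group's Word Problem is needed.

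First, I would apply Todd--Coxeter to the given presentation of $\pi$, with the finite set of generators of $\pi_0$ as input. Since $[\pi:\pi_0]<\infty$ by hypothesis, this procedure is guaranteed to terminate and to output the index $n:=[\pi:\pi_0]$, a (Schreier) transversal for $\pi_0$ in $\pi$, and, most importantly, the permutation representation $\rho\colon\pi\to\mathrm{Sym}(n)$ arising from the left action of $\pi$ on $\pi/\pi_0$, specified by listing the image in $\mathrm{Sym}(n)$ of each generator of $\pi$. Under $\rho$, the subgroup $\pi_0$ is precisely the stabilizer of the point corresponding to the trivial coset.

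Second, I would form the composition $\psi:=\rho\circ\varphi\colon\Gamma\to\mathrm{Sym}(n)$, which is effectively computable from $\varphi$ (given by the images of the generators of $\Gamma$) and from the output of the first step. Then $\Gamma_0=\varphi^{-1}(\pi_0)$ coincides with the $\psi$-stabilizer of the trivial coset, so $\Gamma_0$ has finite index in $\Gamma$ equal to the size of the $\psi(\Gamma)$-orbit of this point. To enumerate coset representatives for $\Gamma_0$ in~$\Gamma$, I perform a breadth-first search in the Schreier graph: starting with the empty word as representative of the trivial coset, and for each representative $t$ already found and each generator $a\in A$, use $\psi$ to decide whether $ta$ lies in a previously encountered coset or a new one (adding $ta$ as the representative of the new coset in the latter case). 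This terminates in at most $n$ steps and yields a Schreier transversal $T$ for~$\Gamma_0$ in~$\Gamma$.

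Finally, with the finite transversal $T$ and the finite presentation $\ll A\mid R\rr$ of $\Gamma$ in hand, I would invoke the Reidemeister--Schreier method to produce an explicit finite presentation of $\Gamma_0$ (whose Schreier generators are expressed as words of bounded length in $T$ and $A$, and whose relations are obtained by rewriting the conjugates $trt^{-1}$ with $t\in T$, $r\in R$). The main subtleties here are the termination of Todd--Coxeter on finite-index subgroups and the correctness of the Reidemeister--Schreier rewriting, both of which are classical. The specific content of the present lemma is simply that the information encoded in the finite generating set of $\pi_0$ and in the morphism $\varphi$ can be packaged into a concrete permutation representation of $\Gamma$ on $\{1,\dots,n\}$, which can then be fed into these standard algorithms.
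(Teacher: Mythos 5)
Your argument is correct and takes essentially the same route as the paper: realize $\pi_0$ as the preimage of a subgroup (point stabilizer) under a computable homomorphism from $\pi$ to a finite group, pull this back along $\varphi$ to obtain coset representatives for $\Gamma_0$ in $\Gamma$, and finish with Reidemeister--Schreier. The only difference is that you produce the finite quotient constructively via Todd--Coxeter coset enumeration, whereas the paper finds a suitable epimorphism $\pi\to G$ onto a finite group by naive search; your version is, if anything, the more concrete of the two.
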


\begin{proof}
By going through all epimorphisms from $\pi$ to finite groups we first find a surjective group morphism $\psi\colon \pi\to G$ to a finite group $G$ and a subgroup $G_0$ of $G$ such that $\pi_0=\psi^{-1}(G_0)$. Replacing $\pi$, $\pi_0$, $\varphi$ by $G$, $G_0$, $\psi\circ\varphi$, respectively, we can thus assume that $\pi$ is finite.
We can compute~$\varphi(\Gamma)$ and $\varphi(\Gamma_0)=\pi_0\cap\varphi(\Gamma)$, and find coset representatives for $\varphi(\Gamma_0)\leq \varphi(\Gamma)$. 
We then compute preimages of these coset representatives under the map~$\varphi$, which are then coset representatives for $\Gamma_0\leq\Gamma$.
Using these coset representatives, the Reidemeister--Schreier process~\cite[Chapter~II, Proposition~4.1]{lyndon-schupp} allows us to find a finite presentation for $\Gamma_0$.  
\end{proof}

\noindent
The  Homeomorphism Problem is a topological analogue of the Isomorphism Problem. For this, consider a class $\mathcal{M}$ of compact, triangulable manifolds.  The \emph{Homeomorphism Problem} in $\mathcal{M}$ asks for an algorithm that takes as input triangulations for two manifolds $M_1,M_2\in\mathcal{M}$ and determines whether or not $M_1$ and $M_2$ are homeomorphic.

\section{Basic Definitions of Group Theory and $3$-Manifold Topology}
\label{section:basicdefinitions}

\noindent
As mentioned in the introduction,
the intended audience for this paper consists of both group theorists and $3$-manifold topologists. In order not to clutter the paper with definitions which are obvious to many, but perhaps not all,  readers,  in this section we summarize some of the basic definitions in $3$-manifold topology and group theory which  come up later in the paper. 

\subsection{Special classes of $3$-manifolds}
We start out with the definition of special classes of $3$-manifolds, which form the ``building blocks'' of arbitrary $3$-manifolds (see Section~\ref{section:3manifolds} below).

First of all, a \emph{spherical  $3$-manifold}  is  the quotient of $S^3$ by a finite subgroup~$\Gamma$ of $SO(4)$ acting freely by rotations on $S^3$; the fundamental group of such a spherical $3$-manifold is   isomorphic to $\Gamma$.
Examples of spherical $3$-manifolds are given by the Poincar\'e homology sphere and by lens spaces. Recall that given coprime integers $p,q\geq 1$
the corresponding \emph{lens space $L(p,q)$ } is 
defined as 
\[ L(p,q):=S^3\,\big/\,(\Z/p\Z)= \big\{ (x,y)\in \C^2:|x|^2+|y|^2=1 \big\}\,\big/\,(\Z/p\Z),\]
where $k+p\Z\in \Z/p\Z$ ($k\in\Z$) acts on $S^3$ by 
$$(x,y)\mapsto (xe^{2\pi ik/p},ye^{2\pi ikq/p}).$$
The fundamental group of $L(p,q)$ is isomorphic to $\Z/p\Z$.

A \emph{Seifert fibered manifold} 
is a $3$-manifold $N$ together with a decomposition into disjoint simple closed curves (called \emph{Seifert fibers}) such that each Seifert fiber has a tubular neighborhood that forms a standard fibered torus.
The \emph{standard fibered torus} corresponding to a pair of coprime integers $(a,b)$ with $a\geq 1$ is the surface bundle of the automorphism of a disk given by rotation by an angle of $2\pi b/a$, equipped with the natural fibering by circles.
Every  spherical $3$-manifold  is a Seifert fibered space (see, e.g., \cite{Sc83b}). Further examples 
of Seifert fibered spaces are given by \emph{$\Nil$-manifolds,} which are by definition  $3$-manifolds that are finitely covered by a torus bundle over~$S^1$ whose monodromy action on the first homology of the torus is represented by an upper triangular matrix which is not diagonal.

A Seifert fibered space is called \emph{small} if the base orbifold is a sphere with at most three cone points.

Later on we will also consider \emph{$\Sol$-manifolds}; these are $3$-manifolds  which are finitely covered by a torus bundle over $S^1$ such that the monodromy has real eigenvalues $\lambda$,~$\lambda^{-1}$ with $\lambda>1$.
$\Sol$-manifolds are not Seifert fibered.

The other building blocks for $3$-manifolds are \emph{hyperbolic $3$-manifolds}; 
these are the $3$-manifolds whose interior admits a complete metric of constant negative curvature $-1$.

Thurston showed that, up to a certain equivalence,  there exist precisely eight $3$-dimensional geometries that model compact $3$-manifolds. These geometries are: the  $3$-sphere with the standard spherical metric, Euclidean $3$-space, 
hyperbolic $3$-space, $S^2\times \R$,  $\Bbb{H}^2\times \R$,  the universal cover $\widetilde{\sl(2,\R)}$ of $\sl(2,\R)$, 
 and two further geometries called $\Nil$ and $\Sol$.  We refer to \cite{Sc83a} for details. 
 A $3$-manifold is called \emph{geometric} if it is an $X$-manifold for some geometry~$X$. By \cite{Sc83a} a $3$-manifold is geometric if and only if it is either a $\Sol$-manifold, or hyperbolic or Seifert fibered.

\subsection{Combining $3$-manifolds}
Given oriented $3$-manifolds $M$ and $N$ we denote the connected sum of $M$ and $N$ by $M\# N$. Let
$N$ be a $3$-manifold.
We say that  $N$ is \emph{prime}  if $N$ cannot be written as a non-trivial connected sum of two manifolds,
i.e., if $N \cong N_1\# N_2$, then $N_1 \cong S^3$ or $N_2 \cong S^3$. (Here,~``$\cong$'' denotes ``homeomorphic,'' and below we often identify homeomorphic $3$-manifolds.)
Moreover,~$N$ is called \emph{irreducible}   if every embedded $S^2$ bounds a $3$-ball. Every irreducible $3$-manifold is prime.
Also, if $N$ is an orientable prime $3$-manifold with no spherical boundary components, then by \cite[Lemma~3.13]{Hel76} either~$N$ is irreducible or $N=S^1\times S^2$. 

Recall that in this paper,
all surfaces in $3$-manifolds are assumed to be compact and
properly embedded. A connected surface $\Si$ in a $3$-manifold $N$ is called \emph{incompressible} if the inclusion induced group morphism $\pi_1\Si\to \pi_1N$ is injective. 
A  \emph{Haken manifold}  is an  orientable and irreducible $3$-manifold which 
 contains an  incompressible, orientable surface $\Sigma$ not homeomorphic to $D^2$ or $S^2$.
 
Finally, given a property $\PP$ of manifolds we say that a $3$-manifold  is \emph{virtually $\PP$} if it admits a (not necessarily regular) finite cover which has the property~$\PP$.
\medskip

\subsection{Definitions from group theory} Let $\PP$ be a property of groups and~$\pi$ be a group.
As for manifolds,  we say that $\pi$ is \emph{virtually $\PP$} if $\pi$ admits a (not necessarily normal) subgroup of finite index that satisfies~$\PP$.
Furthermore, we say that $\pi$ is \emph{residually $\PP$} if given any $g\in\pi$ with $g\neq 1$ there exists a  morphism $\a\colon \pi\to G$ onto a group $G$ that satisfies $\PP$  such that $\a(g)\neq 1$.   A case of particular importance is when $\PP$ is the class of finite groups, in which case $\pi$ is said to be \emph{residually finite}.

We say that a subset $S$ of $\pi$ is \emph{separable} if  for any $g\in \pi\setminus S$, there exists a morphism $\a\colon\pi\to G$ to a finite group with $\a(g)\not\in \a(S)$. We say that $\pi$ is \emph{locally extended residually finite \textup{(}LERF\textup{)}} (or \emph{subgroup separable}) if any finitely generated subgroup of $\pi$ is separable in this sense.  Likewise, we say that $\pi$ is \emph{conjugacy separable} if every conjugacy class in $\pi$ is separable.
 
Finally, let $\Gamma$ be a subgroup of $\pi$. We say that  $\Gamma$  is a \emph{retract} (of $\pi$)
if there exists a \emph{retraction} of $\pi$ onto $\Gamma$,  that is, a group morphism $\pi\to \Gamma$ which is the identity on~$\Gamma$. We say that $\Gamma$ is a \emph{virtual retract} if there exists a finite index subgroup of $\pi$ which contains $\Gamma$ as a retract.

\section{A Quick Trip through $3$-Manifold Topology} 
\label{section:3manifolds}

\noindent
In this section we   recall some of the key results in $3$-manifold topology.
Along the way we    draw some first conclusions for the algorithmic study of $3$-manifolds.

\subsection{Moise's theorem}\label{section:moise}

The first theorem of this section is a consequence of~\cite{Mo52,Mo77}. We refer to \cite{AFW12} for precise references.
The theorem  says in particular that in the classification of $3$-manifolds it does not matter whether we work in the category
of topological, triangulable or smooth manifolds.

\begin{theorem}[Moise]\label{thm: Moise} \label{thm:moise}
Let $N$ be a  topological $3$-manifold. Then 
\bnc
\item $N$ admits a finite triangulation, i.e., $N$ is homeomorphic to a finite simplicial complex; any two triangulations are related by a finite sequence of subdivisions and isotopies; and
\item $N$ admits a smooth structure; any two smooth structures give rise to diffeomorphic manifolds.
\enc
\end{theorem}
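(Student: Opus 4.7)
The plan is to treat the two parts of the theorem separately and to reduce both to results about piecewise-linear (PL) manifolds. For part (i), I would proceed in two sub-steps: first show that $N$ admits \emph{some} triangulation (making it a PL manifold), and second show that any two triangulations have a common subdivision up to isotopy (the Hauptvermutung in dimension three). For part (ii), I would invoke classical smoothing theory to pass from a PL structure to a smooth structure, and to establish uniqueness of the latter.

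For the existence of a triangulation, my approach would follow Moise's original strategy. Cover $N$ by finitely many coordinate charts homeomorphic to open subsets of $\R^3$, and attempt to modify their transition maps so as to obtain a PL atlas. The obstruction to doing this directly is that topological embeddings into $\R^3$ need not be PL. This is where the main technical tool comes in: Bing's \emph{side approximation theorem}, which asserts that any topological $2$-sphere in a PL $3$-manifold can be $\epsilon$-approximated by a PL $2$-sphere that is ``tame from one side.'' Using this, together with induction on the nerve of the cover and careful cut-and-paste arguments, one produces compatible PL structures on overlapping charts and assembles them into a global triangulation. Since $N$ is compact, finiteness of the triangulation comes for free. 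This existence result will be the main obstacle; the technical heart of Moise's work is precisely the surface-approximation machinery.

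For the uniqueness part of (i), I would again rely on Bing-type approximation results. Given two triangulations $T_1$ and $T_2$ of $N$, the simplices of $T_2$ sit inside $N$ as topological subcomplexes with respect to $T_1$. Using the same approximation techniques, each simplex of $T_2$ can be isotoped to be in general PL position with respect to $T_1$. After finitely many such isotopies one obtains a common subdivision. The combinatorial clean-up, replacing ``general position'' intersections by an honest common refinement, is then standard PL topology.

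For part (ii), I would appeal to the Whitehead--Munkres smoothing theorem: every PL manifold of dimension at most seven admits a smoothing, and any two smoothings of a PL manifold of dimension at most six are concordant, hence diffeomorphic. Applying this to the PL manifold obtained in part (i) gives a smooth structure, and any two smooth structures on $N$ come from PL structures that are equivalent by the Hauptvermutung, hence yield diffeomorphic smooth manifolds. Throughout, I would cite \cite{Mo52,Mo77} (and the references in \cite{AFW12}) rather than reproducing the intricate geometric arguments, since in a survey the goal is only to record the statement in a usable form.
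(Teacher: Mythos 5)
Your outline is essentially correct, but note that the paper offers no proof at all of this statement: it is recorded as a black box, with the sentence preceding it simply attributing the result to \cite{Mo52,Mo77} and deferring to \cite{AFW12} for precise references. So the honest comparison is between your sketch of the classical argument and the paper's decision to cite it. Your sketch captures the right structure: existence and Hauptvermutung for triangulations via approximation of topological surfaces in PL $3$-manifolds by PL ones, followed by classical smoothing theory (Whitehead--Munkres--Hirsch) to pass from PL to DIFF, where the relevant obstruction groups vanish in dimension $3$. One historical correction: the side approximation theorem is Bing's tool, underlying \emph{Bing's} later proof of the triangulation theorem, whereas Moise's original proof in \cite{Mo52} proceeds differently (via his theory of ``affine structures'' built up in the earlier papers of that series); since the theorem is attributed to Moise and cites \cite{Mo52,Mo77}, you are really sketching Bing's route under Moise's name. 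This is harmless for a survey, and your final remark --- that one should cite rather than reproduce the geometric arguments --- is exactly what the paper does. The only substantive point worth flagging is that your uniqueness argument for smooth structures quietly uses Whitehead's theorem that a smooth manifold carries an essentially unique compatible PL structure; without that, equivalence of the underlying PL structures does not immediately give a diffeomorphism, so it should be cited alongside the smoothing existence result.
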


\noindent
We list various consequences of this theorem:

\begin{corollary}\label{cor: $3$-manifolds are re}
The set of finite simplicial complexes which are homeomorphic to  closed $3$-manifolds is recursively enumerable. Furthermore, the set of finite simplicial complexes which are homeomorphic to  closed orientable $3$-manifolds is recursively enumerable.
\end{corollary}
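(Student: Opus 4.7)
I will exhibit an algorithm that, on input a finite simplicial complex $K$, decides whether $K$ is homeomorphic to a closed $3$-manifold, together with a supplementary test for orientability in that case. From any such decider, the required recursive enumerations follow by running through all finite simplicial complexes (encoded, say, by vertex sets together with face lists) and printing those that are accepted.

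\textbf{First step: reduction to a condition on vertex links.} By Theorem~\ref{thm:moise}, every topological triangulation of a $3$-manifold is automatically combinatorial, so the link $\operatorname{lk}_K(v)$ of every vertex $v$ is a PL $2$-sphere. Conversely, a pure $3$-dimensional simplicial complex all of whose vertex links are $2$-spheres is a closed PL $3$-manifold. Hence $K$ is homeomorphic to a closed $3$-manifold if and only if $K$ is pure $3$-dimensional and $\operatorname{lk}_K(v)\cong S^2$ for every vertex $v$ of $K$. This reduces the problem to finitely many finite combinatorial checks.

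\textbf{Second step: the combinatorial checks.} Pure $3$-dimensionality is a direct inspection of the face list. For each vertex $v$, form the link $L:=\operatorname{lk}_K(v)$, a finite $2$-complex, and verify that it is a closed combinatorial surface: every $1$-simplex of $L$ is a face of exactly two $2$-simplices of $L$, and the link of every vertex in $L$ is a single cycle. Once $L$ is known to be a closed surface, one checks that $L\cong S^2$ by computing the number of connected components and the Euler characteristic: among closed surfaces, $S^2$ is characterised as the connected surface with $\chi=2$. If every link passes this test, $K$ is a closed $3$-manifold. To decide orientability, orient each $3$-simplex of $K$ arbitrarily and propagate along the dual graph, flipping orientations across shared $2$-faces whenever necessary; $K$ is orientable iff this propagation is globally consistent, which is a finite linear-algebra calculation over $\mathbb{F}_2$ (equivalently, iff $H_3(K;\mathbb{Z})\neq 0$, also algorithmically computable from the chain complex of $K$).

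\textbf{Main obstacle.} There is essentially no obstacle beyond invoking Moise's theorem. The conceptual content is that Theorem~\ref{thm:moise} rules out pathological ``wild'' triangulations of topological $3$-manifolds whose vertex links might be non-standard homotopy $2$-spheres, so that the purely PL local criterion above correctly captures being homeomorphic to a topological $3$-manifold. (In dimension $2$ this is in any case automatic, since a homology $2$-sphere is a $2$-sphere.) Granting Theorem~\ref{thm:moise}, everything else is finite combinatorics.
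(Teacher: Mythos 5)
Your proof is correct and follows essentially the same route as the paper: enumerate finite simplicial complexes, use Moise's theorem to justify that being homeomorphic to a closed $3$-manifold is detected by every vertex link being a $2$-sphere, verify this combinatorially, and test orientability via $H_3$ (the paper leaves the link-checking details to a reference, which you have simply spelled out). No gaps.
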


\begin{proof}
The first statement  follows immediately from Moise's Theorem and the fact that there is an algorithm to determine whether a given finite simplicial complex 
represents a $3$-manifold.  The algorithm in question simply checks that the link of each vertex is a $2$-sphere.  The details are left to the reader.  (See the proof of \cite[Lemma~5.4]{GMW12}.)
The second statement is proved in exactly the same way, except that now we also compute the third homology group to check for orientability.
\end{proof}

\noindent
It is clear that, given a triangulation of a  $3$-manifold, we may write down a finite presentation for its fundamental group, coming from the $2$-skeleton. Hence the previous corollary implies that there is a recursively enumerable set of finite presentations of $3$-manifold groups which contains a presentation of each $3$-manifold group. 
The next result shows that we can also pass from groups to (closed) $3$-manifolds.

\begin{lemma}\label{lem: Groups to closed $3$-manifolds}
There is an algorithm that takes as input a finite presentation for a group $\pi$ and outputs a closed $3$-manifold
 $N$ with $\pi_1N\cong\pi$, if one exists.
\end{lemma}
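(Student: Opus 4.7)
The plan is to combine Corollary~\ref{cor: $3$-manifolds are re} with Lemma~\ref{lem:findiso, rec enum}. From the Corollary we already know that the finite simplicial complexes representing closed $3$-manifolds form a recursively enumerable set; for each such triangulation we can algorithmically read off a finite presentation of $\pi_1 N$ from the $2$-skeleton (take one generator for each edge of a chosen spanning tree complement and one relator for each $2$-simplex). Applying this construction to the enumeration gives a recursively enumerable set $\mathcal R'$ of finite presentations such that every closed $3$-manifold group is represented by some presentation in $\mathcal R'$, and each presentation in $\mathcal R'$ comes equipped with an explicit closed $3$-manifold triangulation that realizes it.

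Given the input presentation $\ll A\mid R\rr$ of $\pi$, set $\mathcal R:=\{\ll A\mid R\rr\}$, which is trivially recursively enumerable. If $\pi$ is the fundamental group of some closed $3$-manifold, then by construction $\mathcal R$ and $\mathcal R'$ contain isomorphic groups, so Lemma~\ref{lem:findiso, rec enum} produces (after finitely many steps) a presentation $\ll A'\mid R'\rr\in\mathcal R'$ and an isomorphism $\pi\to\pi_1N$, where $N$ is the closed $3$-manifold whose triangulation was used to generate $\ll A'\mid R'\rr$. The algorithm then outputs this $N$.

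The only thing to note is that if $\pi$ happens not to be a $3$-manifold group, the procedure runs forever; this is unavoidable in view of Lemma~\ref{lem:cannot detect 3-mfd gps}, and it is consistent with the statement of the lemma, which only requires an output when a closed $3$-manifold with fundamental group $\pi$ exists. There is no real obstacle here: the proof is a routine semi-decision procedure built out of the enumerability of closed $3$-manifolds and the (naive) enumerability of group isomorphisms provided by Lemma~\ref{lem:findiso, rec enum}.
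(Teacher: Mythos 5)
Your proof is correct and follows essentially the same route as the paper: enumerate triangulations of closed $3$-manifolds via Corollary~\ref{cor: $3$-manifolds are re}, read off presentations of their fundamental groups, and run the search of Lemma~\ref{lem:findiso, rec enum} against the input presentation, accepting non-termination when no such manifold exists. No issues.
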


\begin{proof}
By Corollary~\ref{cor: $3$-manifolds are re} and the remark following it, the set of triangulations of closed $3$-manifolds  is
recursively enumerable, and from a triangulation for a closed $3$-manifold $N$, a finite presentation for $\pi_1N$ can be computed. Hence the claim follows from the argument used in the proof of Lemma~\ref{lem:findiso, rec enum}.
 (The algorithm will not terminate if it is fed a finite presentation of a group that is not a fundamental group of a closed $3$-manifold.)
\end{proof}

\noindent
In contrast to the previous lemma, we have:

\begin{lemma}\label{lem:cannot detect 3-mfd gps}
There is \emph{no} algorithm that takes as input a finite presentation for a group $\pi$ and decides whether $\pi$ is the fundamental group of a closed $3$-manifold.
\end{lemma}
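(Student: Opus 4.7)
The plan is to derive the nonexistence of such an algorithm from the \emph{Adyan--Rabin theorem}: every Markov property of finitely presented groups is undecidable. Recall that a property $\PP$ is Markov if (i)~it is an isomorphism invariant, (ii)~some finitely presented group has $\PP$, and (iii)~some finitely presented group $G_-$ embeds in no finitely presented group with $\PP$. I take $\PP(\pi)$ to be ``$\pi$ is isomorphic to the fundamental group of a closed $3$-manifold.'' Clauses (i)~and (ii)~are immediate, the latter because the trivial group equals $\pi_1(S^3)$. It remains to exhibit an obstruction group, and my proposal is $G_-:=\Z^4$.

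To verify that $\Z^4$ embeds in no closed $3$-manifold group $\pi=\pi_1(N)$, first pass to the orientation double cover to reduce to the case that $N$ is orientable: an embedded $\Z^4$ would meet the orientation subgroup in a finite-index subgroup, still isomorphic to $\Z^4$. Next, apply the Kneser--Milnor prime decomposition to write $\pi$ as a free product whose factors are either infinite cyclic (contributed by $S^1\times S^2$-summands) or fundamental groups of prime, irreducible, closed, orientable $3$-manifolds. By Kurosh's subgroup theorem, every abelian subgroup of a free product is either infinite cyclic or conjugate into a factor, so it suffices to bound abelian ranks in the prime irreducible pieces. For this I would invoke the Geometrization Theorem together with the standard case analysis of the eight $3$-dimensional geometries ($\Sph^3$, $\Eucl^3$, $\Sph^2\times\R$, $\Hyp^2\times\R$, $\widetilde{\sl(2,\R)}$, $\Nil$, $\Sol$, $\Hyp^3$): the fundamental group of each geometric closed $3$-manifold has maximal abelian rank at most $3$, and in the non-geometric irreducible case the JSJ splitting forces any higher-rank abelian subgroup to arise either from a geometric vertex group or from an edge torus of rank $2$.

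With $\PP$ thus confirmed to be Markov, the Adyan--Rabin theorem delivers the desired undecidability. The main obstacle is the rank bound on abelian subgroups of closed $3$-manifold groups; while this is now a classical consequence of Geometrization (to be developed in Section~\ref{section:3manifolds}), in writing the proof I would be careful to articulate how it applies in both the geometric and the non-geometric JSJ pieces, and to dispense with the non-orientable case via the covering trick indicated above. Given the strength of the tools involved, the entire argument is effectively a short appeal to Adyan--Rabin once the $\Z^4$-obstruction is in place.
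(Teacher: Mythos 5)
Your proof is correct, and it is in fact the second of the two routes the paper itself indicates: after giving its main argument, the paper remarks that one can also prove the lemma by observing that being a closed $3$-manifold group is a Markov property, naming the Baumslag--Solitar groups or $\Z^4$ as possible obstruction groups --- exactly your $G_-$. The paper's primary proof is different and shorter: it is a reduction to the undecidability of the triviality problem. The trivial group is $\pi_1(S^3)$, and within the class of $3$-manifold groups triviality \emph{is} decidable (Corollary~\ref{cor:decide triviality}, via the uniform solution of the Word Problem, which rests on residual finiteness); hence an algorithm recognizing closed $3$-manifold groups, run first, followed by that triviality test, would decide triviality of arbitrary finite presentations, contradicting Adyan--Rabin. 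Both arguments ultimately invoke Adyan--Rabin, but they lean on different supporting facts: the paper's reduction needs the solvability of the Word Problem for $3$-manifold groups (Geometrization plus Hempel), whereas yours needs only that $\Z^4$ embeds in no closed $3$-manifold group. For that last point your case analysis over the eight geometries and the JSJ graph of groups is more work than necessary: after the reductions to an orientable, prime, irreducible summand via the double cover, Kneser--Milnor and Kurosh (all of which you carry out correctly), one can simply note that such a manifold with infinite fundamental group is aspherical, so its group has cohomological dimension $3$ and cannot contain $\Z^4$, which has cohomological dimension $4$; this disposes of all irreducible pieces at once and does not even require Geometrization.
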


\noindent
  This is an immediate consequence of the fact that although there is no algorithm to decide whether a given finite presentation describes the trivial group \cite{adyan57a, adyan57b, rabin}, there is such an algorithm for the class of $3$-manifold groups (see Corollary~\ref{cor:decide triviality} below).
One can also show Lemma~\ref{lem:cannot detect 3-mfd gps} by observing that being a $3$-manifold group is a `Markov property' in the sense of \cite{rabin},
using the existence of finitely presentable groups which cannot appear as subgroups of $3$-manifold groups (for example, the well-known Baumslag--Solitar groups~\cite{BS62}, or $\mathbb{Z}^4$).


\medskip
\noindent
 In fact, the Word Problem is the only obstacle in the way of algorithmically recognizing closed geometric $3$-manifolds:  

\begin{theorem}[Groves--Manning--Wilton, \cite{GMW12}]
Let $\mathcal G$ be any class of finitely presentable groups 
with uniformly solvable Word Problem.
Then there is an algorithm which takes as input a finite presentation of a group from $\mathcal G$ and decides
whether the group presented by it is the fundamental group of a closed geometric $3$-manifold.
\end{theorem}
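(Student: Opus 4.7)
The plan is to run two semi-decision procedures in parallel: a \emph{YES-verifier} that halts when $\pi$ is certified as the fundamental group of a closed geometric $3$-manifold, and a \emph{NO-verifier} that halts when $\pi$ is certified not to be. Since every input $\pi\in\mathcal{G}$ falls into exactly one case, one of the two procedures must terminate, and we output its verdict.

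For the YES side, I would invoke Corollary~\ref{cor: $3$-manifolds are re} to enumerate triangulations $T_1,T_2,\dots$ of closed $3$-manifolds and read off a finite presentation $P_i$ of $\pi_1 N_i$ from the $2$-skeleton of each. Recognition algorithms available post-Geometrization allow one to filter to the geometric $N_i$, though this is not strictly necessary: it suffices that every closed geometric $3$-manifold eventually appears. The crucial point is that the Word Problem is uniformly solvable in $\mathcal{G}$ by hypothesis, and also in the class of closed geometric $3$-manifold groups. Hence, enumerating candidate morphisms $\varphi\co F(A)\to F(A_i)$ and $\psi\co F(A_i)\to F(A)$ as in Lemma~\ref{lem:findiso}, one can \emph{decide}---rather than merely semi-decide---whether conditions (1) and (2) of its proof hold, using solvable Word Problem on each side. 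When a valid pair is found, output YES and halt.

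For the NO side, I would work geometry by geometry, exploiting the rigid algebraic structure of each of the eight families: finite (spherical); virtually $\Z^3$ of Bieberbach type (Euclidean); virtually two-step nilpotent with specified invariants ($\mathrm{Nil}$); virtually $\Z^2\rtimes\Z$ with Anosov monodromy ($\mathrm{Sol}$); virtually $\Z$ ($\Sph^2\times\R$); central $\Z$-extensions of Fuchsian or Euclidean orbifold groups ($\Hyp^2\times\R$ and $\widetilde{\sl(2,\R)}$); and torsion-free cocompact Kleinian ($\Hyp^3$). For each geometry, I would design a semi-algorithm, run in parallel, that halts iff $\pi$ is \emph{not} of the corresponding form. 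For the seven non-hyperbolic geometries, the solvable Word Problem gives semi-algorithmic access to the centre, torsion, abelianization, and finite-index subgroup structure of $\pi$---enough to certify incompatibility with each rigid template.

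The main obstacle will be the hyperbolic case: certifying that $\pi$ is not the fundamental group of any closed hyperbolic $3$-manifold. Here I would combine Papasoglu's algorithm (which recognizes word-hyperbolicity from a solvable Word Problem) with the algorithmic JSJ theory for hyperbolic groups (Bowditch, Dahmani--Groves) to reduce to the one-ended case, and then recognize closed hyperbolic $3$-manifold groups among these via the characterization of their Gromov boundary as $\Sph^2$ (Bestvina--Mess, Bowditch) together with Mostow rigidity. Stitching the eight semi-algorithms into a clean NO-verifier is delicate, because most natural properties of $\pi$ (for instance, finiteness) are \emph{not} outright decidable from the Word Problem alone; each recognition step must be engineered to be semi-decidable by construction, so that failure to be a closed geometric $3$-manifold group manifests as a finitely detectable obstruction.
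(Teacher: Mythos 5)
The survey does not actually prove this theorem: it is quoted from \cite{GMW12}, with the key hyperbolic case going back to Manning \cite{manning_algorithmic_2002}, so there is no in-paper argument to compare yours against. Your high-level architecture---two parallel semi-decision procedures, with the YES-verifier obtained by enumerating closed geometric $3$-manifolds and certifying an isomorphism via the word problem as in Lemma~\ref{lem:findiso}---is indeed the architecture of the cited proof, and your observation that a solvable Word Problem on both sides makes conditions (1) and (2) of that lemma decidable for each candidate pair of morphisms is correct. However, one claim on the YES side is simply wrong: the filter to \emph{geometric} manifolds is strictly necessary. If you enumerate all closed $3$-manifolds, then for $\pi$ the fundamental group of, say, a closed graph manifold with non-trivial JSJ decomposition---which by Theorem~\ref{thm:primepi} is not isomorphic to the fundamental group of any closed geometric $3$-manifold---your unfiltered YES-verifier halts and incorrectly answers YES.

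The substantive gap is the NO-verifier, which is where essentially all of the content of the theorem lives, and your sketch concedes in its final sentence exactly the point that needs to be proved. Almost none of the negations of your eight ``rigid templates'' is semi-decidable from the Word Problem alone. Concretely: to rule out the spherical geometry you must certify that $\pi$ is infinite, and no standard certificate of infiniteness (an element of infinite order, a free or $\Z^2$ subgroup, subgroups of unbounded index) is semi-decidable from a word-problem oracle, since each is a $\Pi_1$-type condition; a finitely presented infinite simple group with solvable Word Problem, such as Thompson's group $V$, illustrates that some genuinely different certificate is required. The same difficulty afflicts ``not virtually $\Z$,'' ``not virtually $\Z^3$,'' ``not virtually nilpotent,'' and so on. The hyperbolic case fails for a further reason: Papasoglu's algorithm semi-decides hyperbolicity, not its negation; and for a one-ended, torsion-free hyperbolic $\pi$ whose boundary has the \v{C}ech cohomology of the $2$-sphere (equivalently, a hyperbolic $\mathrm{PD}(3)$ group), the step from ``boundary is $S^2$'' to ``$\pi$ is a closed hyperbolic $3$-manifold group'' is precisely the Cannon--Wall conjecture, which is open. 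Your NO-verifier would therefore fail to terminate on any potential counterexample, and its correctness cannot be established unconditionally by this route. The cited proofs proceed by quite different means (for instance, Manning searches directly for a discrete faithful cocompact representation into $\mathrm{PSL}_2(\C)$ in parallel with a bespoke, finitely verifiable obstruction to hyperbolicity), and supplying such certificates geometry by geometry is the actual substance of \cite{GMW12}.
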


\noindent
In a forthcoming paper, the corresponding theorem is proved in the remaining, non-geometric, case \cite{}.  So for example, one can determine algorithmically whether a residually finite finitely presented group is a $3$-manifold group (cf.~Lemma~\ref{lem:resfinite} below).
 
\medskip
\noindent
Finally we observe that given the fundamental group $\pi$ of a  $3$-manifold, we may find a {\it closed}\/ $3$-manifold whose fundamental group retracts onto $\pi$.

\begin{lemma}
There is an algorithm that takes as input a finite presentation for the fundamental group $\pi$ of a  $3$-manifold and outputs a triangulation for a closed $3$-manifold $N$ together with an inclusion $\pi\hookrightarrow\pi_1N$ and a retraction $\pi_1N\to\pi$.
\end{lemma}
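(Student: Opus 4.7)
The plan is to adapt the proof of Lemma~\ref{lem: Groups to closed $3$-manifolds} and then close up any $3$-manifold with boundary via the doubling construction. The key topological observation is this: if $M$ is a compact $3$-manifold with $\partial M\neq\varnothing$, then the double $DM:=M\cup_{\partial M}M$ is a closed $3$-manifold, and the inclusion $\iota\co M\hookrightarrow DM$ of one copy has a left inverse given by the fold map $\rho\co DM\to M$. In particular $\iota_*\co\pi_1M\to\pi_1DM$ is injective and $\rho_*\co\pi_1DM\to\pi_1M$ is a retraction.

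First I would refine Corollary~\ref{cor: $3$-manifolds are re} to show that the set of finite triangulations of \emph{all} compact $3$-manifolds (with or without boundary) is recursively enumerable. The argument is essentially the same as before, once one observes that it is algorithmically decidable whether the link of each vertex of a finite $3$-complex is $S^2$ or $D^2$, for instance by checking that it is a connected $2$-manifold with the correct Euler characteristic and number of boundary components. From each such triangulation $T$ one reads off a finite presentation of the fundamental group via the $2$-skeleton, yielding a recursively enumerable set $\mathcal R$ of finite presentations containing a presentation of every compact $3$-manifold group.

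Given the input presentation $\ll B\mid S\rr$ of $\pi$, I would apply Lemma~\ref{lem:findiso, rec enum} to $\mathcal R$ and the singleton $\{\ll B\mid S\rr\}$. By hypothesis $\pi$ is a $3$-manifold group, so the algorithm terminates with a triangulation $T$ of a compact $3$-manifold $M$, the induced presentation $\ll A\mid R\rr$ of $\pi_1M$, and an explicit isomorphism $\psi\co\pi_1M\to\pi$ together with its inverse. If $\partial M=\varnothing$, I return $N:=M$ with inclusion $\psi^{-1}\co\pi\hookrightarrow\pi_1N$ and retraction $\psi\co\pi_1N\to\pi$. Otherwise I build a triangulation of $N:=DM$ by taking two copies of $T$ and identifying their boundary simplices, compute a presentation for $\pi_1N$ together with the induced maps $\iota_*$ and $\rho_*$ (via van Kampen applied to the decomposition $N=M\cup_{\partial M}M$), and return $N$ with inclusion $\iota_*\circ\psi^{-1}\co\pi\hookrightarrow\pi_1N$ and retraction $\psi\circ\rho_*\co\pi_1N\to\pi$.

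The main technical point is the extension of Corollary~\ref{cor: $3$-manifolds are re} to manifolds with boundary; the remaining work is the routine bookkeeping task of computing $\iota_*$ and $\rho_*$ from the triangulation of the double. Neither step should present a serious obstacle.
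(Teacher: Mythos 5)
Your proposal is correct and rests on the same topological fact as the paper's proof --- the double $DM$ of a bounded $3$-manifold $M$ is closed and the fold map retracts it onto $M$ --- but the algorithmic content is genuinely different. The paper uses the double \emph{only as an existence certificate}: its algorithm blindly enumerates triangulations of closed $3$-manifolds $N$ (Corollary~\ref{cor: $3$-manifolds are re}) together with candidate morphisms $i\colon\pi\to\pi_1N$ and $\rho\colon\pi_1N\to\pi$, semi-deciding the conditions that $i$, $\rho$ are well defined and $\rho\circ i=\operatorname{id}_\pi$ as in Lemma~\ref{lem:findiso}; termination is guaranteed because \emph{some} $N$ (namely a double) admits such maps, but the $N$ found need not be a double and no bounded manifold is ever produced. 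You instead construct everything explicitly: you first locate a triangulated compact $M$ with $\pi_1M\cong\pi$ and then build $N=DM$ and the induced maps by hand. Your route buys more information (an explicit bounded manifold realizing $\pi$ and an $N$ that really is its double), at the cost of two extra pieces of work the paper avoids: (i) extending Corollary~\ref{cor: $3$-manifolds are re} to triangulations of compact $3$-manifolds with boundary --- your link criterion ($S^2$ or $D^2$ at each vertex, detected via the classification of surfaces) is fine for this; and (ii) the simplicial gluing in the doubling step, which is slightly less ``routine'' than you suggest: identifying two copies of a triangulation along their common boundary subcomplex need not yield a simplicial complex (two tetrahedra may end up sharing more than one face), so one should pass to a barycentric subdivision of $T$ before doubling. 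With that standard fix, your argument goes through.
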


\begin{proof}
Let $M$ be a  $3$-manifold, and let $N$ be the double of $M$. Then $N$ is closed;
note that $N$ is orientable if and only if $M$ is orientable.
The `folding map' $N\to M$ is a retraction onto $M$, hence induces a left inverse $\pi_1N\to \pi_1M$ to the morphism $\pi_1M\to \pi_1N$ induced by inclusion.

The algorithm enumerates all closed $3$-manifolds $N$ and all group morphisms $i\co \pi\to\pi_1N$ and  $\rho\co \pi_1N\to\pi$ satisfying $\rho\circ i=\operatorname{id}_\pi$. By the above discussion such  $N$, $i$ and $\rho$ exist. An algorithm as in the proof of Lemma~\ref{lem:findiso}  will eventually find them.
\end{proof}

\noindent
A similar argument as in the proof of the previous lemma, letting $N$ run through all closed orientable $3$-manifolds instead of all closed $3$-manifolds,
shows:

\begin{lemma}\label{lem: Groups to retracts}
There is an algorithm that takes as input a finite presentation for the fundamental group $\pi$ of an orientable  $3$-manifold and outputs a triangulation for a closed orientable $3$-manifold $N$ together with an inclusion $\pi\hookrightarrow\pi_1N$ and a retraction $\pi_1N\to\pi$.
\end{lemma}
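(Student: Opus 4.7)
The plan is to repeat the proof of the preceding lemma verbatim, with the single modification that the enumeration of closed $3$-manifolds is replaced by an enumeration of closed \emph{orientable} $3$-manifolds. Such an enumeration (as triangulations) is available thanks to the second assertion of Corollary~\ref{cor: $3$-manifolds are re}, and from each triangulation one reads off a finite presentation of the fundamental group via the $2$-skeleton.

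First I would verify that a suitable closed orientable $N$ exists. By hypothesis, $\pi \cong \pi_1 M$ for some orientable $3$-manifold $M$; let $N := M \cup_{\partial M} M$ be the double of $M$ along its boundary. Since $M$ is orientable, so is $N$, and $N$ is closed. The folding map $N \to M$ is a retraction, so one of the two copies of $M$ induces an injection $i \co \pi \hookrightarrow \pi_1 N$ and a retraction $\rho \co \pi_1 N \to \pi$ with $\rho \circ i = \id_\pi$.

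The algorithm then simultaneously enumerates (i) triangulations of closed orientable $3$-manifolds $N$ together with the resulting finite presentations $\ll A_N \mid R_N\rr$ of $\pi_1 N$, (ii) group morphisms $F(A) \to F(A_N)$ and $F(A_N) \to F(A)$ between the underlying free groups, and (iii) words in the normal closures of $R$ in $F(A)$ and of $R_N$ in $F(A_N)$, exactly as in the proof of Lemma~\ref{lem:findiso}. For each candidate triple, it checks whether the enumerated words witness that the two morphisms descend to maps $i \co \pi \to \pi_1 N$ and $\rho \co \pi_1 N \to \pi$ satisfying $\rho \circ i = \id_\pi$. By the doubling construction above such data exist, and every positive verification can be witnessed by finitely many words in the appropriate normal closures, so the algorithm will terminate after finitely many steps and output the desired $N$, $i$, and~$\rho$. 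There is no real obstacle here beyond bookkeeping; the only thing worth observing is that orientability is preserved by doubling precisely because we started with an orientable $M$, which is why the argument cannot be used to strengthen the previous lemma.
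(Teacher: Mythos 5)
Your proposal is correct and matches the paper exactly: the paper proves this lemma by remarking that the argument for the preceding (non-orientable) lemma goes through verbatim once one enumerates closed \emph{orientable} $3$-manifolds instead, the witness being the double of $M$, which is orientable and closed precisely because $M$ is orientable. Nothing to add.
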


\subsection{The Prime Decomposition Theorem}

The following theorem allows one to reduce many questions about $3$-manifolds to the case of prime $3$-manifolds.
The existence of a prime decomposition is due to Kneser~\cite{Kn29}, and its uniqueness is due to Milnor~\cite[Theorem~1]{Mil62}.

\begin{theorem}[Prime Decomposition Theorem] \label{thm:prime} 
Let $N$ be an oriented $3$-manifold with no spherical boundary components. Then 
\bnc
\item there exists a decomposition $N\cong N_1\# \cdots \# N_r$ where the $3$-mani\-folds $N_1,\dots,N_r$ are oriented prime $3$-manifolds;
\item if $N\cong N_1\# \cdots \# N_r$ and $N\cong N_1'\# \cdots \# N_s'$ where the $3$-manifolds $N_i$ and $N_i'$  are oriented and prime, then $r=s$ and \textup{(}after reordering\textup{)} there exist orientation-preserving diffeo\-mor\-phisms $N_i\to N_i'$.
\enc
\end{theorem}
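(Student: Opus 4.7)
The strategy is Kneser's classical argument. Fix a triangulation of $N$ (available by Theorem~\ref{thm:moise}). A connected sum decomposition of $N$ corresponds to a system $\mathcal{S}=\{S_1,\dots,S_k\}$ of pairwise disjoint, embedded $2$-spheres in the interior of $N$, no two of which are parallel and none of which bounds a $3$-ball, together with the operation of cutting along $\mathcal{S}$ and capping off the resulting $2$-sphere boundary components with $3$-balls. The key point is \emph{Kneser's finiteness lemma}: there is a constant $c(N)$, depending only on the number of tetrahedra in a fixed triangulation of $N$, bounding the cardinality of any such system $\mathcal{S}$. One proves this by putting the spheres in normal form with respect to the triangulation (so each sphere meets each tetrahedron in a controlled collection of triangles and quadrilaterals), and then observing that too many disjoint normal spheres would force two of them to cobound a product region $S^2\times[0,1]$, i.e.\ to be parallel. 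Taking a maximal such system $\mathcal{S}$, one cuts and caps to obtain $N\cong N_1\#\cdots\#N_r$; maximality of $\mathcal{S}$ forces each $N_i$ to be prime, since an essential $2$-sphere in some $N_i$ would give an additional sphere that could be added to $\mathcal{S}$ in $N$.

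\textbf{Plan for uniqueness.} Suppose $N\cong N_1\#\cdots\#N_r\cong N_1'\#\cdots\#N_s'$, realized by sphere systems $\mathcal{S}$ and $\mathcal{S}'$ respectively. Put $\mathcal{S}$ and $\mathcal{S}'$ in general position, so $\mathcal{S}\cap\mathcal{S}'$ is a disjoint union of circles. Pick a circle $\gamma$ of intersection that is innermost on some component $S'\in\mathcal{S}'$; then $\gamma$ bounds a disk $D\subseteq S'$ with interior disjoint from $\mathcal{S}$. Using $D$, surger some sphere of $\mathcal{S}$ along $\gamma$ to produce two new spheres, yielding a new sphere system $\mathcal{S}_1$ realizing the same decomposition as $\mathcal{S}$ (after discarding any $S^2$'s that bound balls or become parallel), but with strictly fewer intersections with $\mathcal{S}'$. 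Iterating, after finitely many steps one arranges $\mathcal{S}$ and $\mathcal{S}'$ to be disjoint. A similar disk-swapping/isotopy argument, together with irreducibility of the prime pieces, then shows that $\mathcal{S}$ and $\mathcal{S}'$ may be further isotoped so that, up to permutation, they determine the same decomposition; in particular $r=s$ and there are orientation-preserving diffeomorphisms $N_i\to N_i'$.

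\textbf{Main obstacle.} The genuinely subtle point is the uniqueness statement, specifically the bookkeeping associated to $S^2\times S^1$ summands. Such a factor admits an orientation-reversing self-diffeomorphism, so a priori one worries that an $S^2\times S^1$ summand could be flipped when the two sphere systems are reconciled, producing only an orientation-reversing diffeomorphism on that factor. One must verify that the surgery-and-isotopy procedure above can always be carried out in an orientation-coherent way: the two disks used to resolve an intersection circle inherit compatible coorientations from the oriented ambient manifold, and a non-separating sphere in $S^2\times S^1$ together with its parallel copy obtained by surgery bound a product region whose orientations match. This is what ensures that the final matching $N_i\cong N_i'$ is orientation-preserving rather than merely diffeomorphic. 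The existence side, by contrast, is essentially only as hard as establishing the Kneser bound and verifying that normal-form reduction preserves essentiality of spheres.
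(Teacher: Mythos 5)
The paper does not actually prove this theorem: it is stated as a classical result, with existence attributed to Kneser \cite{Kn29} and uniqueness to Milnor \cite[Theorem~1]{Mil62}, so there is no in-paper argument to compare against. Your outline is the standard Kneser--Milnor proof (normal-surface finiteness for existence, innermost-disk surgery on intersecting sphere systems for uniqueness, as in \cite{Mil62} and \cite[Chapter~3]{Hel76}), and as a plan for the existence half it is essentially complete and correct.

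Two points in the uniqueness half deserve correction. First, your ``main obstacle'' is misdiagnosed. Since $S^2\times S^1$ admits an orientation-reversing self-diffeomorphism, any diffeomorphism between two such summands can be composed with one to become orientation-preserving; that factor is precisely where orientation is \emph{not} an issue. The factors for which the orientation-preserving clause has content are the chiral primes (e.g., lens spaces $L(p,q)$ with $L(p,q)\not\cong L(p,-q)$), and the bookkeeping there is handled simply because every cut-and-cap and every isotopy takes place inside the oriented manifold $N$, so each piece carries an induced orientation throughout. The genuine difficulty caused by $S^2\times S^1$ summands is different: they correspond to \emph{non-separating} spheres, so cutting along a maximal system does not cleanly partition $N$ into the $N_i$, and one must separately show that the number of such handle summands is an invariant (Milnor does this by a distinct counting argument rather than by reconciling sphere systems). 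Second, the final step of your plan --- that after making $\mathcal S$ and $\mathcal S'$ disjoint a further ``disk-swapping/isotopy argument'' shows they determine the same decomposition --- is where nearly all the work lies and is asserted rather than argued: two disjoint maximal systems need not be isotopic, and the actual proof shows instead that each irreducible prime summand of one decomposition must coincide with a summand of the other. Relatedly, when you surger a sphere of $\mathcal S$ along an innermost disk of $\mathcal S'$ you must check that at least one of the two resulting spheres is essential and can replace the old one in a system still realizing a prime decomposition; this uses irreducibility of the pieces and is exactly where care with non-separating spheres is needed. None of this is unfixable --- it is all in the cited sources --- but as written the uniqueness half is a statement of intent rather than a proof.
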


\noindent
The following fact says that at least for closed $3$-manifolds, the prime decomposition can be computed;
this is due 
to Jaco--Rubinstein \cite{jaco_efficient_2003} and Jaco--Tollefson \cite[Algorithm 7.1]{jaco_algorithms_1995}.

\begin{theorem}\label{thm:findprimedecomposition}
There is an algorithm that takes as input a finite triangulation for a closed, orientable $3$-manifold $N$ and outputs a finite list of triangulations for closed oriented $3$-manifolds $N_1,\ldots,N_r$ with the property that $N_1\#\cdots\# N_r$ is the prime decomposition of $N$.
\end{theorem}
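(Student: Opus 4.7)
The plan is to invoke Haken's theory of normal surfaces, in the algorithmic form developed by Jaco--Tollefson and Jaco--Rubinstein, to locate essential $2$-spheres, and then recurse until no essential spheres remain. Termination will be guaranteed by the classical Kneser--Haken finiteness bound on the number of pairwise disjoint, non-parallel essential spheres in a triangulated $3$-manifold, which gives an \emph{a priori} bound on the number of prime summands in terms of the input triangulation.

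First I would recall the setup of normal surface theory. Fix a triangulation $T$ of $N$ with $t$ tetrahedra; a normal surface meets each tetrahedron in a disjoint union of the seven standard elementary disk types, and so is encoded by a non-negative integer vector in $\Z^{7t}$ satisfying the matching equations. The resulting solution set forms the lattice points in a rational polyhedral cone, whose finitely many \emph{fundamental} (Hilbert basis) vectors can be computed by linear algebra over $\Q$. A theorem of Haken, refined in the Jaco--Tollefson work cited in the theorem, guarantees that whenever $N$ contains a $2$-sphere not bounding a $3$-ball, some fundamental normal surface is such a sphere.

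Next I would use $3$-sphere recognition as a black box. Given an embedded $2$-sphere $S\subset N$, cut $N$ open along $S$ and cap the resulting boundary spheres off with $3$-balls, producing one or two closed $3$-manifolds whose triangulations can be computed from those of $N$ and~$S$. Then $S$ bounds a ball if and only if one of these capped-off pieces is homeomorphic to $S^3$, which is decidable by the Rubinstein--Thompson $3$-sphere recognition algorithm. Enumerating fundamental normal surfaces and applying this test to each normal $2$-sphere therefore produces either an essential $2$-sphere in $N$ or a certificate that $N$ is irreducible; the exceptional prime $N\cong S^1\times S^2$ can be detected separately from the input (for instance by inspecting $H_1(N;\Z)$ together with $\pi_1 N$ via the presentation from the $2$-skeleton).

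The main algorithm now iterates: if an essential sphere $S$ is found, cut along $S$, cap off with $3$-balls to produce triangulated pieces (introducing an extra $S^1\times S^2$ summand when $S$ is non-separating), and recurse on each piece; otherwise output the current triangulation as a prime summand. The Kneser--Haken bound ensures the recursion depth is controlled by a computable function of $t$, so the procedure terminates, and the list of outputs is a prime decomposition of $N$ by construction, with uniqueness furnished by Theorem~\ref{thm:prime}. I expect the principal obstacle to be the $3$-sphere recognition subroutine, which is the deepest ingredient and rests on Rubinstein's almost-normal surfaces and Thompson's thin position argument; fortunately this is by now a well-documented black box that we can cite. The remaining bookkeeping --- splitting a triangulation along a normal surface, re-triangulating the capped-off pieces, and handling the $S^1\times S^2$ exception --- is routine and can be carried out effectively.
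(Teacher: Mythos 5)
Your proposal is correct and follows essentially the same route as the paper, which proves this theorem only by citation to Jaco--Tollefson and Jaco--Rubinstein: their algorithms are exactly the normal-surface strategy you describe (find a fundamental/vertex normal sphere, test essentiality via $3$-sphere recognition, split and cap off, and terminate via Kneser--Haken finiteness). The only point worth phrasing carefully is that termination is bounded by the number of prime summands of the \emph{original} manifold (a topological invariant controlled by the Kneser bound for the input triangulation), not by the tetrahedron counts of the re-triangulated pieces, which may grow under cutting; but this is the standard argument and not a gap.
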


\begin{remark}
The problem of determining whether an orientable $3$-manifold is irreducible (or prime) is decidable in space polynomial in the size of the triangulation. 
This is implicit in the work of Jaco--Rubinstein \cite{jaco_efficient_2003} and it is also proved explicitly by Ivanov~\cite[Theorem~1]{Iv08}.
\end{remark}

\noindent
The Prime Decomposition Theorem implies that the fundamental group of any orientable  $3$-manifold with no spherical boundary component can be written as the free product of fundamental groups of prime $3$-manifolds.
The following theorem can be viewed as a converse.

\begin{theorem}[Kneser Conjecture]\label{thm:kneserconj}
Let $N$ be a  compact, orientable $3$-mani\-fold with incompressible boundary and $\Gamma_1,\Gamma_2\leq\pi_1 N$ with $\pi_1N\cong \Gamma_1*\Gamma_2$. Then there exist compact, orientable $3$-manifolds $N_1$ and $N_2$
with $\pi_1N_i\cong \Gamma_i$ for $i=1,2$ and $N\cong N_1\# N_2$.
\end{theorem}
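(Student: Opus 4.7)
The plan is to realize the splitting $\pi_1 N \cong \Gamma_1 * \Gamma_2$ geometrically by a system of essential $2$-spheres in $N$, using Bass--Serre theory together with the Dehn lemma and loop theorem of Papakyriakopoulos. First, view the free product decomposition as a (non-trivial, vertex-fixed-point-free) simplicial action of $\pi_1 N$ on a tree $T$ with two vertex orbits whose stabilizers are conjugate to $\Gamma_1$ and $\Gamma_2$, and with trivial edge stabilizers. Build a $\pi_1 N$-equivariant continuous map $\widetilde f \colon \widetilde N \to T$ from the universal cover, make it transverse to the midpoint $m$ of a chosen edge, and set $\widetilde\Sigma := \widetilde f^{-1}(m)$; this is a $\pi_1 N$-invariant system of two-sided properly embedded surfaces in $\widetilde N$ that descends to a two-sided properly embedded surface $\Sigma\subset N$.

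Next, use the incompressibility of $\partial N$ to arrange that $\Sigma$ is disjoint from $\partial N$: the image of each boundary component's fundamental group in $\pi_1 N$ must fix a vertex of $T$ (since edge stabilizers of a free product action are trivial), so after a homotopy one can push $\partial N$ entirely into vertex-preimages. Then simplify $\Sigma$ by compression: whenever a component $S$ of $\Sigma$ admits a compressing disk in $N$, compress along it, updating $\widetilde f$ by an equivariant homotopy. After finitely many compressions, every component of $\Sigma$ is incompressible. Any such component lifts to a surface in $\widetilde N$ whose fundamental group sits inside an edge stabilizer of $T$, which is trivial; hence every component of $\Sigma$ must be a $2$-sphere. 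Finally discard components bounding balls in $N$, leaving a non-empty system of essential $2$-spheres.

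Cut $N$ along these essential $2$-spheres and cap off with balls to obtain a connected sum decomposition $N \cong M_1\#\cdots\#M_k$, which in turn induces a free product decomposition
\[
\pi_1 N \;\cong\; \pi_1 M_1 * \cdots * \pi_1 M_k * F,
\]
with $F$ a free group contributed by the non-separating spheres. Refining each $\Gamma_i$ into its Grushko factors and invoking Grushko's theorem on the uniqueness of free product decompositions into freely indecomposable groups, the factors on the two sides match up (after conjugation and rearrangement). Regrouping the $M_i$'s accordingly gives $N \cong N_1 \# N_2$ with $\pi_1 N_i \cong \Gamma_i$.

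The step I expect to be the main obstacle is the simplification via compression: one must execute loop-theorem compressions on $\Sigma$ in a way that retains the Bass--Serre interpretation, so that the final conclusion ``every incompressible component has trivial $\pi_1$'' still holds after the surgeries. This is the technical heart of Stallings's original argument, and the reason the proof truly requires $3$-manifold-specific tools (Dehn's lemma and the loop theorem); in contrast, the passage from ``essential spheres realizing \emph{some} free splitting'' to ``essential spheres realizing the \emph{given} splitting $\Gamma_1*\Gamma_2$'' is purely group-theoretic via Grushko's theorem.
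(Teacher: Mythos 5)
The paper itself contains no proof of this theorem --- it only cites Stallings for the closed case and Heil for the bounded case --- and your sketch is essentially the argument from those sources: realize the splitting by an action on a tree (equivalently, by a map to a wedge of aspherical spaces), take a transverse preimage of an edge midpoint, push $\partial N$ into vertex stars using incompressibility (correctly identified: each boundary subgroup is freely indecomposable and not infinite cyclic, hence elliptic), compress via the Loop Theorem until every component is a sphere, and cut. You are also right that the equivariant bookkeeping during compression is the technical heart in the $3$-manifold-specific sense.

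There is, however, a genuine gap in your endgame. You assert that the passage from ``essential spheres realizing \emph{some} free splitting'' to the \emph{given} splitting $\Gamma_1*\Gamma_2$ is ``purely group-theoretic via Grushko's theorem.'' It is not. Grushko--Kurosh uniqueness matches up freely indecomposable, non-infinite-cyclic factors, but to regroup the summands you would need each complementary piece $M_l$ of the sphere system to have freely indecomposable fundamental group; for an irreducible piece with incompressible boundary that statement is itself (a case of) the Kneser Conjecture, so the argument is circular, and nothing abstract prevents a single $\pi_1M_l$ from being a free product with one Kurosh factor conjugate into $\Gamma_1$ and another into $\Gamma_2$, in which case $M_l$ can be assigned to neither side. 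The non-circular regrouping is geometric, and you already have the required structure: since $\Sigma$ is the preimage of the orbit of the edge midpoint, each complementary piece maps into the star of a single vertex of $T$, so its fundamental group is carried into a conjugate of $\Gamma_1$ or of $\Gamma_2$; one must then eliminate redundant spheres (tubing together or discarding spheres whose two sides can be homotoped into the same vertex star, and killing the loops of the dual graph, which otherwise contribute $S^1\times S^2$ summands and a spurious free factor) until a single separating sphere realizing the given splitting remains. That reduction, not Grushko's theorem, is what actually finishes Stallings' proof, and it deserves to be flagged as a second technical obstacle alongside the compression step.
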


\noindent
The Kneser Conjecture was first proved by Stallings \cite{St59} in the closed case, and by
Heil \cite[p.~244]{Hei72} in the bounded case.

\subsection{The Geometrization Theorem}

The central  result in $3$-manifold topology is the Geometrization Theorem, which had been conjectured by
Thurston \cite{Th82} and proved by Perelman.

\begin{theorem}[Geometrization Theorem]\label{thm:geom}   
Let $N$ be an orientable, irreducible $3$-manifold with empty or toroidal boundary.  Then there exist disjointly embedded  incompressible tori $T_1,\dots,T_k$ such that each component of~$N$ cut along $T_1\cup \dots \cup T_k$ is  hyperbolic  or Seifert fibered. Any such collection of tori with a minimal number of components is unique up to isotopy.
\end{theorem}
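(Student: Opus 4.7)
The plan is to split the statement into two independent geometric and combinatorial pieces: (i) produce a canonical family of incompressible tori cutting $N$ into algebraically simpler pieces, and (ii) recognize each piece geometrically. For step (i) I would invoke the JSJ decomposition theorem of Jaco--Shalen and Johannson: for any orientable, irreducible $3$-manifold $N$ with empty or toroidal boundary there is a minimal (up to isotopy) collection of disjointly embedded, pairwise non-parallel, incompressible tori $T_1,\dots,T_k$ such that every component $N_j$ of $N\smallsetminus (T_1\cup\cdots\cup T_k)$ is either Seifert fibered or \emph{atoroidal}, in the sense that every incompressible torus in $N_j$ is boundary-parallel. The construction proceeds by taking a maximal disjoint union of characteristic Seifert-fibered submanifolds, and uniqueness is packaged in the Jaco--Shalen--Johannson enclosing property.

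For step (ii) I would feed the atoroidal pieces into the Geometrization Conjecture proper. Here the plan divides naturally: if $N_j$ has non-empty toroidal boundary, then $N_j$ is Haken, and Thurston's hyperbolization theorem for Haken manifolds yields a complete hyperbolic structure of finite volume on its interior. If $N_j$ is closed, one invokes Perelman's work on Ricci flow with surgery (following Hamilton's program): the flow on $N_j$, either after finite time or with appropriate thick/thin decomposition in the long-time behavior, exhibits a hyperbolic structure on any atoroidal closed irreducible piece (using that closed small Seifert fibered spaces have already been separated out by the JSJ step, and that atoroidal closed irreducibles are not Seifert fibered in the remaining cases handled by elliptization). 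The elliptization portion (finite $\pi_1$) does not arise here because such a closed $N_j$ would be prime with finite fundamental group, hence Seifert fibered, and would have been absorbed into step (i).

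Finally, for the uniqueness clause, I would argue that any collection $T_1,\dots,T_k$ of disjoint incompressible tori whose complement consists of hyperbolic or Seifert fibered pieces refines, up to isotopy, to the JSJ family; conversely the JSJ tori are themselves forced, since each of them is essential and non-parallel in any such decomposition. Minimality of $k$ then pins down the collection up to isotopy via the uniqueness part of the JSJ theorem, after observing that a hyperbolic piece of finite volume contains no essential non-peripheral torus (so no tori can be removed from a JSJ family without losing the geometric property on the adjacent pieces).

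The main obstacle is unquestionably step (ii) in the closed atoroidal case: this is Perelman's theorem, whose proof via Ricci flow with surgery, the no-local-collapsing theorem, and the analysis of $\kappa$-solutions is far beyond the scope of a short proof sketch. I would treat it as a black box, citing Perelman's papers (and the expository accounts of Morgan--Tian, Kleiner--Lott, and Cao--Zhu), while giving full detail only for the JSJ construction and the uniqueness argument, where the reasoning is combinatorial-topological and self-contained.
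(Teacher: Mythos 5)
Your outline matches the paper's own treatment exactly: the paper attributes the torus decomposition and the uniqueness of a minimal collection to Jaco--Shalen and Johannson, and then cites Perelman (with Thurston's Haken hyperbolization implicit, and Morgan--Tian for details) for the fact that the atoroidal pieces are hyperbolic or (small) Seifert fibered. The paper, being a survey, gives no more detail than you do, so treating Perelman's work as a black box is entirely consistent with its approach.
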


\noindent
Jaco--Shalen \cite{JS79} and Johannson \cite{Jo79} independently showed that $N$ splits along tori into Seifert fibered pieces and `atoroidal' pieces; they furthermore showed uniqueness for a minimal collection of such tori.
These ato\-roi\-dal pieces were then shown by  Perelman \cite{Pe02,Pe03a,Pe03b} to be either small Seifert spaces  or hyperbolic. 
The full details of Perelman's proof can be found in \cite{MT07,MT14}.

The decomposition of $N$ along a minimal collection of tori as in the Geometrization Theorem is called the JSJ (Jaco--Shalen-Johannson) decomposition of $N$. The tori in question are called the \emph{JSJ tori} of $N$ and the components obtained by cutting along the JSJ tori are referred to as the \emph{JSJ components} of $N$.
If all of the JSJ components are Seifert fibered, then one calls $N$  a \emph{graph manifold}.

\begin{remark}
The Geometrization Conjecture has also been formulated for non-orien\-table $3$-manifolds; we refer to \cite[Conjecture~4.1]{Bon02} for details.
To the best of our knowledge this has not been fully proved yet.
\end{remark}

\noindent
Jaco--Tollefson \cite{jaco_algorithms_1995} and also Jaco--Letscher--Rubinstein \cite{JLR02} showed how to  compute JSJ decompositions:

\begin{theorem}\label{thm:findjsjdecomposition}
There is an algorithm that takes as input a finite triangulation for a closed, orientable, irreducible $3$-manifold $N$ and outputs the JSJ decomposition of $N$. 
\end{theorem}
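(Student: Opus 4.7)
The plan is to combine Haken's theory of normal surfaces with an algorithmic recognition of Seifert fibered spaces with toral boundary. Given the triangulation of $N$, every properly embedded incompressible surface can be isotoped to be normal, and the Kneser--Haken finiteness theorem gives an explicit a priori bound, in terms of the number of tetrahedra, on the number of pairwise disjoint, pairwise non-parallel incompressible tori. Hence the entire JSJ torus system can be sought among certain normal tori coming from fundamental solutions of the matching equations.

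First I would enumerate the finitely many fundamental normal surfaces via the matching equations, and then locate among them the normal tori; by the work of Jaco--Oertel, every incompressible torus is isotopic to one carried by a vertex solution (after suitable subdivision if necessary). For each candidate torus $T$, I would run the classical normal-surface algorithm to decide incompressibility (by searching for a normal compressing disk). Next, I would iterate over the finitely many maximal collections $\mathcal{T}=\{T_1,\ldots,T_k\}$ of pairwise disjoint, pairwise non-isotopic incompressible tori; testing whether two disjoint incompressible tori in an irreducible $3$-manifold are isotopic is algorithmic via Haken--Waldhausen. For each such $\mathcal{T}$, I would triangulate the components $N_1,\ldots,N_m$ of $N$ cut along $T_1\cup\cdots\cup T_k$, and run on each $N_j$ an algorithm deciding whether $N_j$ is Seifert fibered (this is provided by Jaco--Tollefson \cite{jaco_algorithms_1995}). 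Appealing to the Geometrization Theorem~\ref{thm:geom}, any irreducible compact piece with incompressible toral boundary which fails to be Seifert fibered must be hyperbolic, hence atoroidal; thus the Seifert-fibration test suffices to recognise admissible collections. Finally, I would output a collection $\mathcal{T}$ of \emph{minimum} cardinality for which every complementary component is Seifert fibered or atoroidal; by Theorem~\ref{thm:geom}, this is the JSJ decomposition and is unique up to isotopy.

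The main obstacle is algorithmically recognising a Seifert fibration on a bounded, irreducible $3$-manifold — the heart of the technical contribution of \cite{jaco_algorithms_1995} — since horizontal and vertical normal surfaces must be analysed carefully in a manifold that does not yet come equipped with a fibration. A secondary subtlety is the minimality of $\mathcal{T}$: the naive maximal collection of incompressible tori can contain tori sitting entirely inside a Seifert piece and parallel to combinations of fibers, which must be absorbed to obtain the truly canonical collection; enforcing minimality (by testing every sub-collection of $\mathcal{T}$) handles this, but only because the a priori Kneser--Haken bound keeps the search finite.
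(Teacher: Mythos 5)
The paper does not actually prove this theorem; it simply cites Jaco--Tollefson \cite{jaco_algorithms_1995} and Jaco--Letscher--Rubinstein \cite{JLR02}, so what you are really doing is reconstructing their argument. Your high-level plan (normal surfaces, Kneser--Haken finiteness, Seifert-fibered recognition on the complementary pieces, minimality plus the uniqueness clause of Theorem~\ref{thm:geom}) is the right skeleton, and you correctly identify SFS recognition with boundary as the technical heart. However, there is a concrete logical error in your admissibility test. You assert that ``any irreducible compact piece with incompressible toral boundary which fails to be Seifert fibered must be hyperbolic, hence atoroidal.'' Geometrization says no such thing: a non-Seifert-fibered piece may itself contain essential tori (it decomposes further). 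This matters exactly where you use it, namely when testing sub-collections of a maximal collection $\mathcal{T}$ to enforce minimality. If you glue two hyperbolic JSJ pieces along a torus, the resulting piece is not Seifert fibered, so your test would declare it ``hyperbolic, hence atoroidal'' and discard that torus, outputting a collection strictly smaller than the JSJ system. The fix is to test atoroidality directly (e.g., by searching the piece for a non-boundary-parallel incompressible normal torus, or by only accepting non-SFS pieces that are single components of the maximal collection), but as written the algorithm returns wrong answers.

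A second, subtler gap is the passage from individual normal tori to collections. You enumerate candidate tori among fundamental/vertex solutions and then form ``maximal collections of pairwise disjoint, pairwise non-isotopic incompressible tori,'' but you give no procedure for deciding when two candidate tori can be isotoped to be disjoint, nor any argument that the JSJ tori can be \emph{simultaneously} realized as disjoint surfaces drawn from your finite list (normalizing one torus can destroy disjointness from another, and two vertex-solution tori need not be compatible as normal surfaces). Moreover, maximality within your finite list does not by itself imply maximality among all incompressible tori in $N$, which is what you need for the complementary pieces to be atoroidal. These are precisely the points where Jaco--Tollefson have to work hard (least-weight representatives, Haken sums, and the behaviour of normal isotopies), and they cannot be waved away; without them the correctness of the search over collections is not established.
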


\noindent
A proof of the following theorem is provided by  \cite[Algorithm~8.1]{jaco_algorithms_1995}.

\begin{theorem}\label{thm:isitsfs}
There is an algorithm that takes as input a finite triangulation for a closed, orientable, irreducible $3$-manifold $N$ and determines whether $N$ is Seifert fibered or not.
\end{theorem}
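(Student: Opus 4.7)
The strategy is to combine the JSJ decomposition algorithm of Theorem~\ref{thm:findjsjdecomposition} with the Geometrization Theorem~\ref{thm:geom}. First, apply Theorem~\ref{thm:findjsjdecomposition} to the given triangulation to compute the JSJ decomposition of~$N$. A closed, orientable, irreducible Seifert fibered $3$-manifold is already geometric, modelled on one of the six Seifert geometries ($S^3$, $\Eucl^3$, $S^2\times\R$, $\Nil$, $\Hyp^2\times\R$, or $\widetilde{\sl(2,\R)}$), so under the convention of Theorem~\ref{thm:geom} its JSJ decomposition contains no tori at all. Consequently, if the algorithm returns at least one JSJ torus, we conclude that $N$ is not Seifert fibered and output \textbf{NO}.

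In the remaining case the JSJ decomposition is empty, and by Geometrization $N$ is either hyperbolic or Seifert fibered, with the two cases mutually exclusive. To decide which, run two semi-algorithms in parallel. The first enumerates closed, orientable Seifert fibered $3$-manifolds by their normalised Seifert invariants, produces a triangulation of each, and tests whether the result is homeomorphic to~$N$; since normalised Seifert invariants are a complete homeomorphism invariant within this class, the Homeomorphism Problem there is solvable, and this search succeeds precisely when $N$ is Seifert fibered. The second semi-algorithm attempts to certify that $N$ is hyperbolic, for instance by exhibiting and rigorously verifying a solution to Thurston's gluing equations on a suitable subdivision of the given triangulation. By Geometrization exactly one of these two procedures terminates, and its verdict answers the question.

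The main obstacle is this second step: both parallel subroutines are non-trivial, and a rigorous certification of hyperbolicity requires some care. A more direct and self-contained path, and the one actually followed by Jaco and Tollefson in \cite[Algorithm~8.1]{jaco_algorithms_1995}, proceeds entirely via normal surface theory: one enumerates the finitely many fundamental normal surfaces in the given triangulation and searches for one that exhibits a Seifert fibered structure on~$N$, such as a horizontal incompressible surface or a system of vertical tori cutting $N$ into fibred solid tori. Since closed, orientable, irreducible Seifert fibered $3$-manifolds are characterised by the existence of such surfaces, and since fundamental surfaces suffice to detect them, this directly decides the question without invoking any external hyperbolicity-certification step.
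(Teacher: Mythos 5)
Your final paragraph coincides exactly with what the paper does: its entire proof of this theorem is the citation of Jaco--Tollefson \cite[Algorithm~8.1]{jaco_algorithms_1995}, whose normal-surface-theoretic content you describe essentially correctly. The first route you sketch --- compute the JSJ decomposition via Theorem~\ref{thm:findjsjdecomposition}, note that a nonempty minimal torus collection rules out a Seifert fibration, and in the empty case run in parallel a search for a Seifert fibered model and a hyperbolicity certification --- is a genuinely different and workable argument; its hyperbolic half is precisely Manning's algorithm \cite{manning_algorithmic_2002}, which the paper cites elsewhere for related purposes. One caveat on that route: your justification of the Seifert-side semi-algorithm is slightly circular, since solvability of the Homeomorphism Problem \emph{within} the class of Seifert fibered spaces (via normalised invariants) does not by itself let you compare a candidate Seifert fibered manifold with the input $N$, which is not yet known to lie in that class. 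What you actually need there is only a \emph{semi}-decision procedure for homeomorphism of triangulated $3$-manifolds, and Moise's theorem (Theorem~\ref{thm:moise}) supplies one: enumerate common subdivisions (equivalently, sequences of Pachner moves) and halt when a PL homeomorphism is found. With that repair both of your arguments are sound; the Jaco--Tollefson route has the additional advantage of being self-contained in normal surface theory and of not presupposing the Geometrization Theorem, which your JSJ-based route uses twice.
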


\subsection{Consequences of the Geometrization Theorem}

The subsequent theorem is a consequence of the Geometrization Theorem together with work of Leeb \cite{Le95}; see also \cite[Theorem~2.3]{Be13} for the extension to the case of non-toroidal boundary.

\begin{theorem}\label{thm:npc}
If $N$ is an aspherical, orientable $3$-manifold which is not a closed graph manifold,
then the interior of $N$ 
 admits a complete, non-positively curved, Riemannian metric.
\end{theorem}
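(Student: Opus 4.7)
The plan is to apply the Geometrization Theorem (Theorem~\ref{thm:geom}) to decompose $N$ into geometric pieces, endow each piece with a non-positively curved (NPC) metric having a standardised flat product form in a collar of every toroidal boundary component, glue these metrics along the JSJ tori, and finally attach complete NPC ends to achieve completeness on the interior.

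Concretely, first I would invoke Theorem~\ref{thm:geom} to obtain an incompressible toral decomposition of $N$ whose pieces are either hyperbolic or Seifert fibered. Asphericity of $N$ rules out spherical space forms as pieces, while the hypothesis that $N$ is not a closed graph manifold rules out the one case (all pieces closed and Seifert fibered, glued along JSJ tori into a closed whole) where the sought-after NPC metric may genuinely fail to exist (see Kapovich--Leeb's obstructions). For each remaining piece I would construct an NPC metric with a collar of every toroidal boundary component isometric to a flat product $T^2\times[0,\epsilon)$. On a hyperbolic piece, begin with the complete hyperbolic metric on the interior, truncate each cusp at a horospherical torus (whose induced metric is flat), and smooth so that the resulting metric is strictly negatively curved in the interior and a flat product in the collar. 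On a Seifert fibered piece with toroidal boundary, follow Leeb \cite{Le95}: put a finite-area hyperbolic (or Euclidean) metric on the base orbifold, take the associated Seifert fibration with a suitable warped circle-bundle metric whose sectional curvatures are non-positive, and adjust near each boundary torus so the metric there is a flat product.

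The key technical step, carried out by Leeb, is the gluing along each JSJ torus. The two flat metrics on the two collars $T^2\times[0,\epsilon)$ adjacent to a given JSJ torus need not agree under the gluing homeomorphism; since the space of flat structures on $T^2$ is convex and the gluing maps are affine, one can interpolate the two product metrics through the collars via warped product metrics that remain NPC, producing a smooth global NPC metric on the compact manifold $N$ with flat product collars around $\partial N$. To achieve completeness on the interior one attaches an infinite cusp $T^2\times[0,\infty)$ with metric $e^{-2t}g_{\mathrm{flat}}+dt^2$ to each toroidal boundary component; this end is complete and has constant curvature $-1$, hence NPC, and it matches the flat product collar smoothly. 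For non-toroidal boundary components of $N$ one invokes Bessi\`eres' extension \cite[Theorem~2.3]{Be13} to attach an appropriate warped-product end over each higher-genus boundary surface. The main obstacle throughout is verifying that non-positive curvature is preserved through the metric interpolations near the JSJ tori and near the attached ends; Leeb's proof handles this via explicit sectional curvature computations for warped products, together with the convexity properties of NPC metrics on the torus.
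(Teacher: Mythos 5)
The paper gives no proof of this theorem: it is stated as a consequence of the Geometrization Theorem together with Leeb's work \cite{Le95}, with \cite[Theorem~2.3]{Be13} cited for non-toroidal boundary. Your sketch is essentially a reconstruction of Leeb's argument -- decompose along the JSJ tori, equip hyperbolic pieces with truncated-and-flattened cusp metrics and Seifert pieces with suitable non-positively curved fibred metrics, match the flat structures on the collars, and glue -- so you are following the route the paper points to, and you correctly attribute the genuinely hard steps (flattening hyperbolic cusps without losing NPC, the interpolation lemma for two flat metrics on $T^2\times[0,1]$, realising prescribed flat boundary data on Seifert pieces) to \cite{Le95}.

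However, one step as written would fail, and it is exactly the step that produces completeness -- the part of the statement going beyond non-positive curvature of the compact pieces. The metric $e^{-2t}g_{\mathrm{flat}}+dt^2$ on $T^2\times[0,\infty)$ does \emph{not} match a flat product collar smoothly: the warping function $f(t)=e^{-t}$ has $f'(0)=-1$, so the gluing torus is totally geodesic on the product side but has non-vanishing second fundamental form on the cusp side. The glued metric is only $C^0$ there, and it is not NPC even in the synthetic sense: geodesics between points of the gluing torus dip into the cusp (in $\mathbb{H}^2$ the geodesic between two points of a horocycle is shorter than the horocyclic arc and enters the horoball), so positive curvature concentrates along the junction. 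Nor can one smooth the junction while staying NPC, since the plane spanned by $\partial_t$ and a horizontal vector has sectional curvature $-f''/f$, forcing $f$ to be convex, and no convex $f$ can pass from $f\equiv 1$ near $t=0$ to a decreasing exponential. The repair is easy: attach instead the flat cylinder $T^2\times[0,\infty)$ with the product metric $g_{\mathrm{flat}}+dt^2$, which is complete, flat, and literally an extension of the collar; or, for boundary tori of hyperbolic pieces, simply do not truncate those cusps and keep the complete finite-volume hyperbolic metric there. With that correction (and with Leeb's lemmas doing the real work, as you acknowledge), your outline is the standard proof.
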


\noindent
It is important to note  that there  are graph manifolds that are not non-positively curved.
For example Sol- and Nil-manifolds are not non-positively curved.
We refer to  \cite{BK96a,BK96b} and \cite{Le95} for more information.

\medskip
\noindent
The following theorem  is a consequence of the Geometrization Theorem in combination with the Mostow--Prasad Rigidity Theorem, work of Waldhausen \cite[Corollary~6.5]{Wa68a}, and
Scott \cite[Theorem~3.1]{Sc83b} and classical work on spherical $3$-manifolds.

\begin{theorem}\label{thm:primepi}
Let $N$ and $N'$ be two   orientable, closed, prime $3$-manifolds and let  $\varphi\colon \pi_1N\to \pi_1N'$ be an isomorphism. Then the following hold:
\bnc
\item If $N$ and $N'$ are not lens spaces, then  $N$ and $N'$ are homeomorphic.
\item If  $N$ and $N'$ are not spherical, then there exists a homeomorphism which induces $\varphi$.
\enc
\end{theorem}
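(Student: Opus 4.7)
The plan is to combine the Geometrization Theorem with three classical uniqueness results, selected according to the geometric type of $N$. First I would reduce to the irreducible case: since a closed, orientable, prime $3$-manifold is either irreducible or $S^1\times S^2$, and since no closed, orientable, irreducible $3$-manifold has $\pi_1\cong\Z$ (such a manifold would be a closed aspherical $K(\Z,1)$, forcing $H_3=0$), the case $\pi_1N\cong\Z$ forces $N\cong N'\cong S^1\times S^2$, for which both conclusions are immediate (the only automorphisms of $\Z$ are $\pm\id$, and both are realized by self-homeomorphisms of $S^1\times S^2$). So from now on assume $N$ and $N'$ are irreducible.

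Next I would distinguish whether $\pi_1N$ is finite or infinite. If $\pi_1N$ is finite, Perelman's Elliptization Theorem (part of the Geometrization Theorem) implies that $N$ and $N'$ are both spherical, and the classical classification of spherical space forms recalled in \cite{Sc83b} shows that two non--lens-space spherical $3$-manifolds with isomorphic fundamental groups are homeomorphic, proving (1); statement (2) is vacuous here because spherical manifolds are excluded. If $\pi_1N$ is infinite, then by the Sphere Theorem applied to the universal cover, $N$ and $N'$ are both closed aspherical $3$-manifolds, so $\varphi$ is induced by a homotopy equivalence $f\colon N\to N'$ (any map of $K(\pi,1)$'s realizing $\varphi$ will do).

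For this aspherical case I would apply Geometrization to $N$ and split into three subcases. If $N$ has a non-trivial JSJ decomposition, the JSJ tori are incompressible and make $N$ Haken; Waldhausen's theorem \cite[Corollary~6.5]{Wa68a} then promotes $f$ to a homeomorphism homotopic to it, which therefore induces $\varphi$. If $N$ is Seifert fibered with infinite fundamental group, Scott's theorem \cite[Theorem~3.1]{Sc83b} directly realizes $\varphi$ by a homeomorphism. Finally, if $N$ is hyperbolic, Mostow--Prasad Rigidity upgrades $f$ to an isometry, which is the required homeomorphism and which induces $\varphi$ by construction.

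The delicate point, and the place where (1) and (2) diverge, is ensuring that the homeomorphism induces the \emph{given} isomorphism $\varphi$ rather than merely some isomorphism. This is built into each of Waldhausen's, Scott's, and Mostow--Prasad's results, so in the infinite-$\pi_1$ regime no further work is needed. In the spherical regime, however, even for non--lens-space spherical manifolds not every outer automorphism of $\pi_1$ is realizable by a self-homeomorphism, which is precisely why (2) excludes all spherical examples while (1) excludes only lens spaces; I would therefore expect the main bookkeeping obstacle to be clearly delineating which part of each cited rigidity statement supplies the \emph{given} $\varphi$ and which only guarantees the bare homeomorphism.
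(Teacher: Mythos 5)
Your proposal is correct and follows essentially the same route the paper indicates: the paper gives no written-out proof but cites exactly the ingredients you use (Geometrization, the classical classification of spherical space forms, Waldhausen's \cite[Corollary~6.5]{Wa68a} for the Haken case, Scott's \cite[Theorem~3.1]{Sc83b} for Seifert fibered spaces with infinite fundamental group, and Mostow--Prasad rigidity for the hyperbolic case). Your additional bookkeeping — disposing of $S^1\times S^2$ first, checking that the aspherical subcases are exhaustive, and flagging that realizing the \emph{given} $\varphi$ is exactly what fails in the spherical case — fills in the details the paper leaves implicit.
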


\subsection{The Virtually Compact Special Theorem}

We now turn to the theorem which is arguably the most important result in $3$-manifold topology since the proof of the Geometrization Conjecture.
The statement was  proved by Wise \cite{Wi09,Wi12a,Wi12b} for hyperbolic $3$-manifolds with boundary
and for closed hyperbolic $3$-manifolds which admit a geometrically finite surface. The general case of closed hyperbolic $3$-manifolds is 
due to Agol~\cite{Ag13}.

\begin{theorem}[Virtually Compact Special Theorem] \label{thm:akmw}
The fundamen\-tal group of a hyperbolic $3$-manifold is the fundamental group of a virtually-special compact cube complex.
\end{theorem}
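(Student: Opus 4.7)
The plan is to reduce the theorem to three successive inputs: (i) produce an abundance of quasi-convex codimension-one surface subgroups in $\pi_1 N$; (ii) use Sageev's construction to cubulate $\pi_1 N$ on a CAT(0) cube complex on which it acts properly and cocompactly; (iii) promote the cubulation to a virtually special one by invoking the Agol/Wise theory of cubulated hyperbolic groups. Along the way I would split into two cases, according to whether $N$ has toroidal boundary (the cusped case, due to Wise) or is closed (the case completed by Agol).

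For step (i), consider first the case in which $N$ has non-empty toroidal boundary, so $\pi_1 N$ is a relatively hyperbolic group. Here one can appeal to results of Cooper--Long--Reid and of Bergeron--Wise producing a large supply of geometrically finite immersed surfaces, whose fundamental groups give quasi-convex codimension-one subgroups of $\pi_1 N$ that are rich enough to separate any pair of points in the Bowditch boundary. In the closed case, $\pi_1 N$ is word hyperbolic with Gromov boundary $S^2$, and the corresponding input is the Kahn--Markovic Surface Subgroup Theorem, which produces nearly geodesic immersed closed surfaces in $N$; Bergeron--Wise then verify that one may choose such surfaces with limit circles sufficiently dense that every pair of distinct points in $S^2_\infty$ is separated by some limit circle.

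For step (ii), given this separating collection $\mathcal{H}$ of quasi-convex codimension-one subgroups, apply Sageev's dual cube complex construction to obtain a CAT(0) cube complex $X$ on which $\pi_1 N$ acts. The Bergeron--Wise criterion then guarantees that, because the limit sets separate the boundary, the action is both proper and cocompact. Thus $\pi_1 N$ is the fundamental group of a compact non-positively curved cube complex $Y := \pi_1 N \backslash X$. For step (iii), invoke Agol's theorem that every cubulated word hyperbolic group is virtually compact special in the sense of Haglund--Wise; this gives a finite-index subgroup $H \leq \pi_1 N$ acting freely on $X$ with $H \backslash X$ a compact special cube complex, which is the conclusion. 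In the cusped (relatively hyperbolic) case, the analogous promotion step is Wise's earlier theorem covering the relatively hyperbolic setting.

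The main obstacle is step (iii), Agol's theorem. Its proof is the deepest part: one must turn a mere cubulation into a finite-sheeted cover whose hyperplanes are two-sided, embedded, non-self-osculating, and pairwise non-inter-osculating. Agol achieves this through an intricate coloring argument on the dual cube complex, whose central input is Wise's Malnormal Special Quotient Theorem; the construction of suitable colorings, and the verification that the associated cover is indeed special, is the technical heart of the argument and the only step I would not expect to sketch at the level of routine detail.
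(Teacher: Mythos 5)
The paper does not prove this theorem at all: it is quoted as a black box, attributed to Wise \cite{Wi09,Wi12a,Wi12b} for the cusped case and for closed manifolds containing a geometrically finite surface, and to Agol \cite{Ag13} for the remaining closed case. So there is no in-paper argument to compare against; what you have written is a summary of the proof in the cited literature, and as such it is essentially accurate. Your three-step decomposition (Kahn--Markovic surfaces, Sageev's dual cube complex with the Bergeron--Wise separation criterion for properness and cocompactness, then Agol's theorem that cubulated word-hyperbolic groups are virtually compact special, resting on Wise's Malnormal Special Quotient Theorem) is the standard account of the closed case and correctly identifies where the real difficulty lies. Two caveats on the cusped case. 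First, Wise's actual route there is not via a Bergeron--Wise-style cubulation from a separating family of surfaces but via the quasiconvex hierarchy: a cusped hyperbolic $3$-manifold has $b_1\geq 1$, hence is Haken, hence admits a hierarchy along surfaces that (after handling the fibered case separately) can be taken geometrically finite, and Wise's main theorem is that a group hyperbolic relative to abelian subgroups with such a quasiconvex hierarchy is virtually special. Second, in the cusped case the action on the dual cube complex is proper but only cosparse, not cocompact, so the passage to a \emph{compact} (virtually) special cube complex requires an additional argument; your sketch elides this. Neither point undermines the overall structure, but a referee of a full write-up would ask you to address both.
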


\noindent
The definition of a  `special compact cube complex' goes back to Haglund and Wise~\cite{HW08}. We will not repeat the definition here, but we will give a precise algebraic statement which is a consequence of the preceding theorem, and sufficient for our purposes.

In order to do so, we need one more definition.
Given a   graph $G$ with vertex set $V=\{v_1,\dots,v_k\}$ and edge set $E$,
the corresponding \emph{right-angled Artin group} is defined as
\[
\Gamma=\Gamma(G)=\big\langle v_1,\dots,v_k \,\big|\, \text{$[v_i,v_j]=1$ if $(v_i,v_j)\in E$}\bigg. \big\rangle.\]
For example, free groups of finite rank and finitely generated free abelian groups are
right-angled Artin groups. See \cite{charney} for a survey on this class of groups.
The following theorem now follows from Theorem \ref{thm:akmw}.

\begin{theorem}\label{thm:raag}
Let $N$ be a hyperbolic $3$-manifold. Then $\pi_1N$ is  virtually a virtual retract of a right-angled Artin group. This means that there exists a finite index subgroup $\pi'$ of $\pi_1N$, a finite-index subgroup $\Gamma$ of a right-angled Artin group, and an embedding $\pi'\to \Gamma$ which has a left-inverse $\Gamma\to \pi'$.
\end{theorem}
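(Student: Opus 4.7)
The plan is to deduce Theorem~\ref{thm:raag} from Theorem~\ref{thm:akmw} by invoking the canonical embedding and canonical completion constructions of Haglund--Wise~\cite{HW08}, applied to a special finite cover of the aspherical cube complex supplied by the Virtually Compact Special Theorem.

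First, by Theorem~\ref{thm:akmw}, the group $\pi_1 N$ is the fundamental group of a compact nonpositively curved cube complex $X$ which admits a finite regular cover $X' \to X$ such that $X'$ is a compact \emph{special} cube complex in the sense of Haglund--Wise. Set $\pi' := \pi_1 X'$, a finite index subgroup of $\pi_1 N$.

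Next, to the special cube complex $X'$ Haglund--Wise associate a right-angled Artin group $A = A(X')$ whose defining graph has a vertex for each hyperplane of $X'$ and an edge between two vertices whenever the corresponding hyperplanes cross in $X'$. They construct a canonical combinatorial local isometry $f\colon X' \to S_A$ into the Salvetti complex $S_A$ of $A$; since $X'$ is compact and $S_A$ is nonpositively curved, $f$ lifts to an embedding of universal covers, and hence induces an injection $f_*\colon \pi' \hookrightarrow A$ on fundamental groups.

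The remaining step is to promote this embedding to a virtual retract. This is precisely the content of the Haglund--Wise \emph{canonical completion and retraction}: for any local isometry of compact special cube complexes $Y \to Z$, there is a functorial finite cover $\widehat Z \to Z$ through which $Y \to Z$ lifts to an inclusion $Y \hookrightarrow \widehat Z$ that is a retract of $\widehat Z$. Applied to $f\colon X' \to S_A$, this produces a finite cover $\widehat{S_A} \to S_A$, an inclusion $X' \hookrightarrow \widehat{S_A}$, and a retraction $\widehat{S_A} \to X'$. Taking fundamental groups and setting $\Gamma := \pi_1 \widehat{S_A}$, a finite-index subgroup of $A$, we obtain the desired embedding $\pi' \hookrightarrow \Gamma$ together with a left-inverse $\Gamma \to \pi'$.

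The main obstacle is packaging the Haglund--Wise machinery: the definition of a special cube complex and the verification that the canonical map $X' \to S_A$ really is a local isometry, and that the canonical completion is finite, all require the four defining properties of specialness (no one-sided, no self-intersecting, no self-osculating, no inter-osculating hyperplanes). Once these are granted, the combinatorial topology is straightforward and the theorem follows directly.
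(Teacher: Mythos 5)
Your argument is correct and is precisely the route the paper intends: the paper offers no proof beyond asserting that the statement ``follows from Theorem~\ref{thm:akmw}'' and deferring to \cite{AFW12}, and the omitted details are exactly the Haglund--Wise canonical local isometry $X'\to S_A$ into the Salvetti complex of the hyperplane-crossing RAAG together with canonical completion and retraction, as you spell out. The only minor point to note is that specialness passes to finite covers, so your normalization to a regular special cover is harmless.
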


\noindent
We refer to \cite{AFW12} for details.
The Virtually Compact Special Theorem has many striking consequences; for example, it implies by Agol's theorem \cite{Ag08} (see also \cite{FK14}) that any
hyperbolic $3$-manifold with empty or toroidal boundary  is virtually fibered.
Together with 
the Tameness Theorem of Agol \cite{Ag04} and Calegari--Gabai \cite{CG06} 
and work of Haglund \cite{Hag08} one also obtains the following theorem.

\begin{theorem}\label{thm:hypsubgroup}
Let $N$ be a hyperbolic $3$-manifold and $\Gamma$ be a finitely generated subgroup of $\pi:=\pi_1N$. Then one of the following holds:
\bnc
\item either $\Gamma$ is a virtual retract of $\pi$, or
\item there exists a finite index subgroup $\pi'$ of $\pi$ which contains $\Gamma$ as a normal subgroup with $\pi'/\Gamma\cong \Z$.
\enc
In either case, $\Gamma$ is a separable subgroup of $\pi$. 
\end{theorem}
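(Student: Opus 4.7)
The plan is to base the proof on the classical dichotomy provided by the Tameness Theorem of Agol and Calegari--Gabai, combined with Canary's Covering Theorem: any finitely generated subgroup $\Gamma$ of $\pi=\pi_1 N$ is either \emph{geometrically finite} or \emph{geometrically infinite}, and in the latter case $\Gamma$ is a virtual fibre subgroup. Concretely, for the geometrically infinite case I would argue that, after passing to a finite cover of $N$ realizing $\Gamma$ as the fibre group of a surface-bundle structure (using tameness plus Canary's Covering Theorem, handling the cusped case in the usual way), there exists a finite index subgroup $\pi'\leq \pi$ containing $\Gamma$ such that $\Gamma\triangleleft\pi'$ with $\pi'/\Gamma\cong \Z$. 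This yields conclusion~(2).

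For the geometrically finite case I would invoke the Virtually Compact Special Theorem~\ref{thm:akmw}, or equivalently Theorem~\ref{thm:raag}: a finite index subgroup $\pi_0\leq \pi$ embeds as a virtual retract into a right-angled Artin group $\Gamma(G)$. A geometrically finite subgroup of a hyperbolic $3$-manifold group corresponds, under the embedding into the cube complex, to a \emph{convex cocompact} subgroup of the ambient RAAG, and by Haglund's theorem~\cite{Hag08} such subgroups are virtual retracts of $\Gamma(G)$. Composing with the retraction $\Gamma\hookrightarrow\Gamma(G)\to\pi_0\hookrightarrow\pi$ (suitably restricted to a finite index subgroup) then exhibits $\Gamma$ as a virtual retract of~$\pi$, giving conclusion~(1).

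For the final separability assertion, I would treat the two cases uniformly using the fact that $\pi$ is residually finite (being virtually a subgroup of a RAAG, and hence linear). In case~(1), a virtual retract of a residually finite group is automatically separable: if $\Gamma\leq \pi'\leq_{\textup{f.i.}}\pi$ with retraction $\rho\colon\pi'\to\Gamma$, then for any $g\in \pi'\smallsetminus \Gamma$ the element $g\rho(g)^{-1}$ is non-trivial, so can be killed by a finite quotient of $\pi'$, and one concludes separability in $\pi'$ and hence in $\pi$ by a standard argument. In case~(2), $\Gamma$ is the kernel of an epimorphism $\pi'\twoheadrightarrow\Z$, and since $\Z$ is residually finite, $\Gamma$ is separable in $\pi'$; separability then passes from the finite index subgroup $\pi'$ up to $\pi$.

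The main obstacle is the geometrically finite case: it rests on two deep inputs, namely the Virtually Compact Special Theorem to produce the embedding into a RAAG, and Haglund's theorem identifying convex cocompactness with being a virtual retract. The geometrically infinite case, by contrast, is essentially a packaging of Agol--Calegari--Gabai tameness together with Canary's Covering Theorem, and the separability statement is a relatively soft consequence once the structural dichotomy is in place.
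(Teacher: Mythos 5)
Your proposal follows essentially the same route the paper indicates: the paper derives this theorem from the Virtually Compact Special Theorem together with the Tameness Theorem and Haglund's work, with case~(1) corresponding to $\Gamma$ geometrically finite and case~(2) to $\Gamma$ geometrically infinite (virtual fibre), exactly as you argue, deferring details to \cite{AFW12}. Your additional derivation of separability (retracts of residually finite groups are separable in case~(1); kernels of maps to $\Z$ are separable in case~(2), with separability passing up from a finite-index subgroup) is the standard argument and is correct.
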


\noindent
In the theorem the former case corresponds to $\Gamma$ being `geometrically finite', whereas the latter corresponds to $\Gamma$ being `geometrically infinite'. 
Here we again refer to our survey \cite{AFW12} for  details and precise references.
Theorem~\ref{thm:hypsubgroup} implies in particular  that 
the fundamental group of a hyperbolic $3$-manifold is subgroup separable.

\section{Decision Problems}\label{section:solutions}

\noindent
This section is organized as follows.  For each decision problem, we start by stating the theorem that describes its solvability in maximum generality and by sketching a proof or giving all the relevant references.  We then go on to give a brief sample of the literature on the problem and  discuss variations.

\subsection{The Word Problem}
The first classical decision problem which was solved for $3$-manifold groups was the Word Problem.

\begin{theorem}\label{thm: Word Problem}
The Word Problem is uniformly solvable in the class of fundamental groups of  $3$-manifolds.
\end{theorem}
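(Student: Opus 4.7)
The plan is to invoke McKinsey's classical observation: any finitely presentable residually finite group has uniformly solvable Word Problem. Given as input a finite presentation $\ll A\mid R\rr$ of a $3$-manifold group $\pi$ together with a word $w\in F(A)$, I would run the following two enumerations in parallel. Process~(i) lists the elements of the normal closure of $R$ in $F(A)$, halting with the output ``$w$ represents the identity'' the moment $w$ is produced. Process~(ii) enumerates all pairs $(G,\alpha)$ in which $G$ is a finite group and $\alpha\colon F(A)\to G$ is a group morphism with $\alpha(r)=1_G$ for each $r\in R$ (so $\alpha$ descends to a morphism $\pi\to G$), halting with output ``$w$ does not represent the identity'' as soon as a pair is found with $\alpha(w)\neq 1_G$. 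Process~(i) terminates exactly when $w=1$ in $\pi$; process~(ii) terminates exactly when $w$ has nontrivial image in some finite quotient of $\pi$. Hence the algorithm halts on every input provided every $3$-manifold group is residually finite.

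The critical input is therefore residual finiteness of all $3$-manifold groups. The algorithm itself does not need to use this fact constructively, so it suffices to establish it abstractly, by reduction along the standard structure theory. Passing to the orientation double cover (a subgroup of index at most $2$) reduces the problem to orientable manifolds, since a group containing a residually finite subgroup of finite index is again residually finite. Capping spherical boundary components with $3$-balls does not affect the fundamental group, so we may assume empty or toroidal boundary. Since free products of residually finite groups are residually finite, the Prime Decomposition Theorem (Theorem~\ref{thm:prime}) then reduces to prime manifolds, and so, aside from the trivial case $S^1\times S^2$, to the irreducible case.

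The main obstacle is residual finiteness of irreducible $3$-manifold groups. When the manifold is geometric this is classical: Seifert fibered, $\Nil$-, and $\Sol$-manifolds have virtually polycyclic fundamental groups, while hyperbolic $3$-manifold groups embed into $\mathrm{PSL}_2(\C)$; in either case, Mal'cev's theorem on finitely generated linear groups applies. The remaining, non-geometric, case is Hempel's theorem on the residual finiteness of Haken $3$-manifold groups, which is established by gluing across the JSJ tori produced by the Geometrization Theorem (Theorem~\ref{thm:geom}). (As an alternative for closed hyperbolic manifolds, the Virtually Compact Special Theorem, Theorem~\ref{thm:akmw}, gives residual finiteness because subgroups of right-angled Artin groups are residually finite.) Combining these structural results with McKinsey's parallel enumeration yields the claimed uniform solvability of the Word Problem in the class of fundamental groups of $3$-manifolds.
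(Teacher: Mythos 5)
Your proposal follows essentially the same route as the paper: the algorithm is exactly the parallel-enumeration argument of Lemma~\ref{lem:resfinite} (McKinsey, Dyson, Mostowski), and the non-constructive ingredient is the residual finiteness of all $3$-manifold groups, which the paper records as Theorem~\ref{thm:Hempel} and simply cites as a consequence of Geometrization and Hempel's work. The one point that needs correcting is in your supplementary sketch of residual finiteness: it is \emph{not} true that Seifert fibered manifolds have virtually polycyclic fundamental group. A Seifert fibered space whose base orbifold is hyperbolic --- for instance the trivial circle bundle over a closed orientable surface of genus $2$, with fundamental group $\pi_1\Sigma\times\Z$ --- contains a nonabelian free subgroup, so it is not virtually polycyclic; only the spherical, Euclidean, $\Nil$ and $\Sol$ geometries yield virtually solvable groups. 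The conclusion you need is nonetheless classical: Seifert fibered groups are central extensions of Fuchsian orbifold groups by $\Z$ and are residually finite (indeed linear), so Mal'cev's theorem or a direct argument still applies, and the gap is local and easily patched. Your remaining reductions --- the orientation double cover, capping spherical boundary components, Gruenberg's theorem for free products, linearity of hyperbolic groups via $\mathrm{PSL}_2(\C)$, and Hempel's gluing argument across the JSJ tori for the non-geometric case --- are sound, and together they amount to a proof of Theorem~\ref{thm:Hempel} rather than an alternative to the paper's argument.
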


\noindent
It is conceptually easiest to prove the theorem by appealing to the following theorem, a consequence 
of the Geometrization Theorem and work of Hempel~\cite{hempel_residual_1987}.

\begin{theorem}\label{thm:Hempel}
The fundamental group of each $3$-manifold is residually finite.
\end{theorem}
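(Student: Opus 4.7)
The plan is to reduce to the case of a closed, orientable, irreducible $3$-manifold with empty or toroidal boundary, and then apply the Geometrization Theorem together with Hempel's theorem on Haken manifolds.

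I would begin with the standard reductions. Residual finiteness passes both to subgroups of finite index and, via the normal-core construction, to finite-index overgroups. Passing to the orientation double cover therefore reduces to the case of orientable $N$. Capping off any $S^2$-boundary components with $3$-balls does not change $\pi_1 N$, and doubling $N$ along its boundary components of genus at least one produces a $3$-manifold $DN$ in which $\pi_1 N$ sits as a retract of $\pi_1(DN)$, with $\partial(DN)$ empty or toroidal; since residual finiteness descends to subgroups, it suffices to treat the case where $\partial N$ is empty or toroidal. By the Prime Decomposition Theorem (Theorem~\ref{thm:prime}) and the classical result (due to Gruenberg) that a free product of residually finite groups is residually finite, I may further assume $N$ is prime. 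Since $\pi_1(S^1 \times S^2) = \mathbb{Z}$ is residually finite, I may then assume $N$ is irreducible.

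At this point Geometrization (Theorem~\ref{thm:geom}) applies and I would split into cases. If $\pi_1 N$ is finite then the conclusion is trivial. If $N$ is a $\Sol$- or $\Nil$-manifold, then $\pi_1 N$ is virtually polycyclic and hence residually finite by a theorem of Hirsch. If $N$ is Seifert fibered with infinite fundamental group, then $\pi_1 N$ is a central extension of a Fuchsian or Bieberbach orbifold group by $\mathbb{Z}$, and a direct argument (for instance, reducing to a finite cover that is a circle bundle over a surface) yields residual finiteness. If $N$ is hyperbolic, then $\pi_1 N$ embeds into $\mathrm{PSL}(2,\mathbb{C})$ and Mal'cev's theorem on finitely generated linear groups applies. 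The remaining case is that $N$ has nontrivial JSJ decomposition; then $N$ is Haken, and I would invoke Hempel's theorem~\cite{hempel_residual_1987}.

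The main obstacle is this final, Haken, case. Here $N$ is built from geometric pieces whose fundamental groups are residually finite (by the earlier cases), glued along peripheral copies of $\mathbb{Z}^2$; residual finiteness is, in general, \emph{not} preserved under taking fundamental groups of graphs of groups, so the substance of Hempel's work is to show that one can coherently choose finite quotients of the vertex groups that agree on each edge group. This relies on the fact that each geometric vertex group has enough finite quotients restricting faithfully to its peripheral subgroups. Originally Hempel's argument was conditional on the Geometrization Conjecture; with Perelman's theorem it is now unconditional.
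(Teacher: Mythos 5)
Your outline is correct and matches the paper's approach: the paper gives no proof beyond attributing the result to the Geometrization Theorem combined with Hempel's work \cite{hempel_residual_1987}, and your sketch is precisely the standard unpacking of that citation (reductions to the orientable irreducible case, direct verification for the geometric pieces, and delegation of the essential difficulty of compatible finite quotients along JSJ tori to Hempel). Nothing is done differently in substance.
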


\noindent
Theorem~\ref{thm: Word Problem} is now an immediate consequence of the following well-known observation
of Dyson~\cite{dyson} and Mostowski~\cite{mostowski},
which in essence goes back to McKinsey~\cite{mckinsey}; see~\cite{evans}.

\begin{lemma}\label{lem:resfinite}
The Word Problem is uniformly solvable in the class of finitely presented residually finite groups.
\end{lemma}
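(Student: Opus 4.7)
The plan is to run two semi-decision procedures in parallel. Fix a finite presentation $\langle A\mid R\rangle$ for a residually finite group $\pi$, and let $w\in F(A)$ be the input word. Let $N\trianglelefteq F(A)$ denote the normal closure of $R$, so that $\pi\cong F(A)/N$ and our task is to decide whether $w\in N$.

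The first semi-algorithm systematically enumerates the elements of $N$. Explicitly, one enumerates all finite sequences $(u_1,r_1^{\eps_1}),\dots,(u_k,r_k^{\eps_k})$ with $u_i\in F(A)$, $r_i\in R$, $\eps_i\in\{\pm 1\}$, forms the corresponding product of conjugates $\prod_{i=1}^k u_i r_i^{\eps_i} u_i^{-1}$, reduces it as an element of $F(A)$, and compares the result to $w$. This procedure halts and outputs \textbf{YES} exactly when $w\in N$.

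The second semi-algorithm enumerates pairs consisting of a finite group $G$ (presented, for instance, by its multiplication table) together with a function $\varphi_0\colon A\to G$. For each such pair it checks, by direct computation in $G$, whether $\varphi_0(r)=1$ in $G$ for every $r\in R$; if so, $\varphi_0$ extends to a morphism $\varphi\colon \pi\to G$, and one then computes $\varphi(w)\in G$ and tests whether it equals $1$. If $\varphi(w)\neq 1$ is ever encountered, the procedure halts and outputs \textbf{NO}. By the hypothesis that $\pi$ is residually finite, if $w\notin N$ then some finite quotient separates the image of $w$ from $1$, and so this procedure is guaranteed to terminate.

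Running both procedures in parallel therefore always halts, and correctness follows from what was just said: semi-algorithm~1 terminates iff $w$ represents $1$, and semi-algorithm~2 terminates iff $w$ does not. The entire construction is uniform in the presentation $\langle A\mid R\rangle$, since the input $(A,R,w)$ feeds directly into both enumerations. The only mildly delicate point is the effectivity of enumerating finite group quotients, which is handled by the explicit enumeration of multiplication tables on finite sets together with functions from the finite generating set $A$; this is a standard and entirely mechanical step, so there is no real obstacle.
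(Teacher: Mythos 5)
Your proposal is correct and follows essentially the same two-parallel-semi-algorithms strategy as the paper: enumerate products of conjugates of relators to detect $w=1$, and enumerate morphisms to finite groups to detect $w\neq 1$ via residual finiteness. You merely spell out the enumeration of finite quotients (multiplication tables plus maps on $A$) in more detail than the paper does.
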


\begin{proof}
Let $\pi=\ll A\,|\,R\rr$  be a finite presentation  for a residually finite group
and let $w$ be a word in $A$.  Systematically enumerate all words which are products
of conjugates of elements in $R$ and check whether $w$ is one of them, 
and  simultaneously also enumerate all morphisms $\a\colon\pi\to G$ to finite groups~$G$
and test whether the element $g$ of $\pi$ represented by $w$ is in the kernel of $\a$.
If $w$ represents the identity element of $\pi$, then our first procedure  eventually detects this;
on the other hand, if $w$ represents an element $g\neq 1$ of $\pi$, then by residual finiteness there exists a morphism $\a\colon \pi\to G$ to a finite group with $\a(g)\ne 1$, and our second procedure will  detect this after finitely many steps.
\end{proof}

\noindent
Although this approach gives a clean and uniform solution to the Word Problem, it has the disadvantage that it gives a very poor upper bound for its computational complexity.   Another approach, which gives much better estimates of the complexity of the algorithm involved, uses the notion of automaticity.  We refer to \cite{epstein_word_1992} for the definition of an automatic group and for further details.

For our purposes, the key fact is that the elements of an automatic group can efficiently be put into a \emph{canonical form} \cite[Theorem 2.3.10]{epstein_word_1992}; in particular, the word problem is (efficiently) solvable in automatic groups.

The following result is Theorem 12.4.7 of \cite{epstein_word_1992}.

\begin{theorem}\label{thm:n3automatic}
Let $N$ be an orientable $3$-manifold such that no prime factor admits $\Nil$ or $\Sol$ geometry.  Then $\pi_1N$ is automatic.
\end{theorem}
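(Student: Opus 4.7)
The plan is to prove the theorem in three stages: first reduce to prime pieces, then handle each prime piece geometrically via the JSJ decomposition, and finally combine the geometric pieces along their JSJ tori.

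\medskip

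\noindent\textbf{Reduction to prime factors.} By the Prime Decomposition Theorem (Theorem~\ref{thm:prime}) we may write $N$ as a connected sum of oriented prime $3$-manifolds $N_1,\dots,N_r$, so that $\pi_1 N \cong \pi_1 N_1 * \cdots * \pi_1 N_r$. Since the class of automatic groups is closed under free products (a standard combination theorem in \cite{epstein_word_1992}, Chapter~12), it suffices to treat each $\pi_1 N_i$ separately. A prime $3$-manifold is either $S^1\times S^2$ (whose fundamental group $\Z$ is automatic) or irreducible, so from now on I assume each $N_i$ is irreducible and, by hypothesis, neither $\Nil$ nor $\Sol$.

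\medskip

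\noindent\textbf{Geometric pieces.} Apply the Geometrization Theorem (Theorem~\ref{thm:geom}) to each irreducible $N_i$ to obtain a JSJ decomposition of $N_i$ into hyperbolic and Seifert fibered pieces, glued along incompressible tori. The vertex groups in the associated graph of groups are of three types:
\emph{(i)} finite, if the piece is spherical;
\emph{(ii)} word-hyperbolic, if the piece is hyperbolic with empty or toroidal cusps (these are known to be automatic, indeed biautomatic);
\emph{(iii)} Seifert fibered with base orbifold of Euclidean or hyperbolic type. In case (iii), the fundamental group contains a central cyclic subgroup $\langle t\rangle$ with quotient a $2$-orbifold group, and an explicit biautomatic structure was constructed in \cite{epstein_word_1992} (Chapters~4 and~8). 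The excluded geometries $\Nil$ and $\Sol$ are precisely those for which the corresponding groups are known \emph{not} to admit an automatic structure, so the hypothesis is used here in an essential way. The edge groups are always isomorphic to $\Z^2$ and are biautomatic.

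\medskip

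\noindent\textbf{Gluing along the JSJ tori.} The heart of the proof is to combine the biautomatic structures on the vertex groups into a biautomatic structure on $\pi_1 N_i$, viewed as the fundamental group of the graph of groups dual to the JSJ decomposition. For this I would invoke the combination theorem for graphs of biautomatic groups amalgamated along virtually abelian subgroups, in the form developed in \cite{epstein_word_1992} (Chapter~12, especially the result feeding into Theorem~12.4.7). The verification requires that the embeddings of each JSJ-torus group $\Z^2$ into the adjacent vertex groups are ``rational'' with respect to the chosen biautomatic structures, which can be arranged because each peripheral $\Z^2$ is a quasi-convex subgroup of a biautomatic vertex group (cusp subgroups in the hyperbolic pieces, and fiber-plus-boundary subgroups in the Seifert pieces). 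The main technical obstacle is precisely this compatibility: it is not enough for each vertex group to be biautomatic, one must line up the languages on the edge groups. This is the core content of the argument and is where the $\Nil$/$\Sol$ hypothesis is used a second time, since the failure of biautomaticity for those pieces would obstruct the gluing even if only one bad piece were present. Assembling the vertex structures via this combination theorem yields an automatic structure on each $\pi_1 N_i$, completing the proof.
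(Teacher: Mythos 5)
Your overall architecture (prime decomposition, then JSJ, then a combination theorem over the torus edge groups) is indeed the strategy of the source the paper cites for this result --- the paper itself gives no proof beyond the reference to Theorem~12.4.7 of \cite{epstein_word_1992}. However, the step you yourself identify as ``the heart of the proof'' rests on a combination theorem that is false in the generality you state. Under the paper's formulation of the Geometrization Theorem (Theorem~\ref{thm:geom}), a closed $\Sol$ torus bundle \emph{is} a graph manifold: cutting along the fibre torus yields $T^2\times[0,1]$, which is Seifert fibered, so its JSJ graph of groups is an HNN extension whose vertex group and edge group are both $\Z^2$, hence biautomatic. Yet the resulting group has exponential Dehn function (Theorem~8.1.3 of \cite{epstein_word_1992}, quoted later in this paper) and so is not automatic. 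Thus there is no ``combination theorem for graphs of biautomatic groups amalgamated along virtually abelian subgroups,'' and your account of how the $\Nil$/$\Sol$ hypothesis enters --- via the non-biautomaticity of a bad vertex group obstructing the gluing --- cannot be right: in the $\Sol$ example every vertex and edge group is biautomatic, and it is the gluing isomorphism (the Anosov monodromy) that destroys automaticity. The actual argument in \cite{epstein_word_1992} must, and does, impose conditions on \emph{how} the $\Z^2$ edge groups are identified inside the adjacent vertex groups, not merely on the vertex groups themselves; this is where the excluded geometries are genuinely ruled out, and your sketch does not supply that analysis.

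Two smaller points. First, your case (ii) is misstated: a finite-volume hyperbolic piece with toroidal cusps has $\Z^2$ cusp subgroups and is therefore not word-hyperbolic; such groups are biautomatic for different reasons (they are the geometrically finite groups treated in \cite{epstein_word_1992}), and conflating the two hides exactly the peripheral structure that the gluing step needs. Second, in case (iii) a Seifert fibered space over a Euclidean base orbifold has automatic fundamental group only when the Euler number vanishes (Euclidean geometry); when it is nonzero one gets a $\Nil$ manifold, so the dichotomy ``Euclidean or hyperbolic base'' does not by itself invoke your hypothesis correctly --- you need to exclude the twisted central extensions explicitly. The reduction to prime factors via closure of automaticity under free products is fine.
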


\noindent
By another result of \cite{epstein_word_1992}, there is an algorithm that finds automatic structures on groups, and it follows that this solution to the word problem is uniform. 

One way to quantify the efficiency of this solution to the Word Problem uses the notion of the Dehn function of a group.  Given a finite presentation  $\langle A\,|\, R\rangle$ of a group~$\pi$ and word $w\in F(A)$ that represents the identity element in $\pi$, the \emph{area} of $w$ is defined to be the minimal $n$ such that
\[
w=\prod_{i=1}^n g_ir_i^{\pm 1}g_i^{-1}\qquad\text{where $g_i\in F(A)$ and $r_i\in R$.}
\]
The \emph{Dehn function $\delta_{\langle A\,|\, R\rangle}\colon\N\to\N$} of the presentation $\langle A\,|\, R\rangle$ is   defined   by
\[
\delta_{\langle A\,|\, R\rangle}(n)=\max\big\{ \operatorname{Area}(w) : l_A(w)\leq n,\ w=_{\pi}1 \big\},
\]
where $l_A(w)$ is the length of the word $w$ in $F(A)$; that is, $\delta(n)$ is the maximal area among all words of length at most $n$ in $F(A)$ which represent the identity element of $\pi$.  Although this definition depends on the presentation~$\langle A\,|\, R\rangle$ of $\pi$, it turns out that the growth type of $\delta_{\langle A\,|\, R\rangle}$ only depends on $\pi$, and so we will often write $\delta_\pi$ instead.
The Word Problem in $\pi$ is solvable if and only if its Dehn function is computable (by a Turing machine).
Roughly speaking, the Dehn function of $\pi$ measures the difficulty of solving the Word Problem in $\pi$.

\begin{definition}
The group $\pi$ is said to satisfy a \emph{linear \textup{(}respectively sub-quadratic, quadratic, exponential\textup{)} isoperimetric inequality} if $\delta_\pi$ is bounded above by a linear (respectively sub-quadratic, quadratic, exponential) function.
\end{definition}

\noindent
The terminology refers to a beautiful \emph{Filling Theorem} which, for a compact Riemannian manifold $N$, relates the growth type of $\delta_{\pi_1N}$ to solutions to Plateau's Problem in the universal cover of $N$ (see \cite{bridson_geometry_2002}).

\medskip
\noindent
The connection to automaticity is given by \cite[Theo\-rem~2.3.12]{epstein_word_1992}:

\begin{theorem}\label{thm: Automatic quadratic}
Every automatic group satisfies a quadratic isoperimetric inequality.
\end{theorem}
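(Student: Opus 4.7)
The plan is to exploit the defining \emph{fellow-traveller property} of an automatic structure on $\pi$ in order to build an explicit van Kampen diagram of quadratic area for any null-homotopic word. Fix an automatic structure, consisting of a finite generating set $A$ (closed under inverses) and a regular language $L\subseteq F(A)$ of normal forms, such that the evaluation map $L\to \pi$ is surjective and, for each generator $a\in A\cup\{1\}$, there is a constant $K$ with the following property: whenever $u,v\in L$ satisfy $ua=_\pi v$ in $\pi$, the Cayley-graph distance between the prefixes $u(t)$ and $v(t)$ (of length $t$) is at most $K$ for every $t\geq 0$. A standard first consequence, which I would record at the outset, is that $\pi$ is finitely presented; moreover, by adjoining to the relating set every null-homotopic word over $A$ of length at most $2K+2$, we may assume every such short loop bounds a van Kampen subdiagram of area at most some uniform constant $C$.

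Now let $w=a_1\cdots a_n$ be a word of length $n$ representing $1\in\pi$. For $0\leq i\leq n$, let $g_i\in\pi$ be the element spelled by $a_1\cdots a_i$ and choose a normal form $\sigma_i\in L$ representing $g_i$, with $\sigma_0=\sigma_n$ the empty word. A standard consequence of the fellow-traveller property is that normal forms have length comparable to the word metric, so $|\sigma_i|\leq \lambda\cdot d_A(1,g_i)\leq \lambda i\leq \lambda n$ for some constant $\lambda$. Applied with the generator $a_{i+1}$, the fellow-traveller property yields $d_A(\sigma_i(t),\sigma_{i+1}(t))\leq K$ for every $t$. I would assemble a van Kampen diagram for $w$ by stacking $n$ ``ladders'', one for each pair $(\sigma_i,\sigma_{i+1})$: for each $t$, pick a geodesic rung of length $\leq K$ between $\sigma_i(t)$ and $\sigma_{i+1}(t)$, and fill each quadrilateral bounded by two consecutive rungs and one letter of each $\sigma_i,\sigma_{i+1}$ (together, at the top, with the edge $a_{i+1}$) using the uniform-area subdiagram guaranteed by the choice of presentation. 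The number of such small cells in the $i$-th ladder is bounded by $\max(|\sigma_i|,|\sigma_{i+1}|)\leq \lambda n$, so the total area is at most $C\lambda n^2$, giving the quadratic isoperimetric inequality $\delta_\pi(n)\leq C\lambda n^2$.

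The main obstacles are bookkeeping ones rather than conceptual. First, when $|\sigma_i|\neq |\sigma_{i+1}|$ the ladder is not strictly rectangular, and some attention is needed at the tail end where one normal form has been exhausted while the other has not, but this can be handled by inserting degenerate rungs or, equivalently, by padding the shorter word with a stay-put symbol in the synchronous automaton. Second, one must justify both the finite presentability of $\pi$ and the uniform bound $C$ on fillings of short null-homotopic loops; both follow from the fellow-traveller property by a direct ladder argument applied to an arbitrary null-homotopic word together with its normal form, and would be established as a preparatory lemma before the main quadratic estimate.
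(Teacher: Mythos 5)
Your argument is correct and is essentially the proof of the result the paper relies on: the paper gives no proof of this theorem beyond citing \cite[Theorem~2.3.12]{epstein_word_1992}, and the proof there is exactly this fellow-traveller ladder construction (stack $n$ ladders between consecutive normal forms $\sigma_i$, fill each rung-bounded quadrilateral with a uniformly bounded subdiagram). The one point deserving care is that the linear bound $|\sigma_i|\leq \lambda n$ does not follow from the fellow-traveller property alone but requires the regularity of $L$ (a pumping argument on the multiplier automata, or equivalently an inductive choice of the $\sigma_i$ with $|\sigma_{i+1}|\leq|\sigma_i|+N$); this is precisely the standard preparatory lemma you allude to, so the proof stands as written.
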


\noindent
The combination of Theorems \ref{thm:n3automatic} and \ref{thm: Automatic quadratic} now gives  the following corollary.

\begin{corollary}\label{cor: Quadratic isoperimetric inequality}
Let $N$ be an orientable $3$-manifold such that no prime factor admits $\Nil$ or $\Sol$ geometry. Then $\pi_1N$ satisfies a quadratic isoperimetric inequality.
\end{corollary}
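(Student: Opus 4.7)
The plan is to deduce the corollary immediately from the two results just stated. Under the hypothesis that $N$ is orientable and that no prime factor of $N$ admits $\Nil$ or $\Sol$ geometry, Theorem~\ref{thm:n3automatic} asserts that $\pi_1 N$ is automatic. Theorem~\ref{thm: Automatic quadratic} then gives that any automatic group satisfies a quadratic isoperimetric inequality, so $\pi_1 N$ does, which is exactly the desired conclusion.

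In this sense the corollary lives up to its name: the substantive content has already been absorbed into the two preceding theorems. The first packages together the fact that the fundamental groups of the allowed geometric pieces (hyperbolic, $\Eucl$, $\Sph^3$, $\Sph^2\times\R$, $\Hyp^2\times\R$, $\widetilde{\sl(2,\R)}$) are each automatic, together with closure of automaticity under the operations needed to reassemble prime pieces along their $\Z^2$ peripheral subgroups and to take free products corresponding to prime decomposition. The second translates the fellow-traveller property of an automatic structure into a quadratic bound on the areas of van Kampen diagrams. Neither input is needed in any deeper form than already stated.

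There is essentially no obstacle to the deduction itself: the hypotheses of Theorem~\ref{thm:n3automatic} match the hypotheses of the corollary verbatim, and the conclusion of Theorem~\ref{thm: Automatic quadratic} is precisely what the corollary asserts. I would simply write the proof as a one-line appeal to the two theorems. (As a remark, one could also give a more geometric proof of the corollary bypassing automaticity, by handling each prime factor separately---hyperbolic pieces are $\delta$-hyperbolic and hence have even linear Dehn function, non-positively curved graph manifold groups fall under Theorem~\ref{thm:npc} and are $\operatorname{CAT}(0)$, spherical and $S^2\times\R$ pieces have finite or virtually cyclic fundamental group---and then using closure of quadratic isoperimetric behaviour under free products; but the uniform automaticity argument is cleaner and is the one the exposition is steering toward.)
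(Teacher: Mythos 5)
Your proof is correct and is exactly the paper's argument: the corollary is stated there as an immediate combination of Theorem~\ref{thm:n3automatic} and Theorem~\ref{thm: Automatic quadratic}, with no further content. The additional remarks you offer are reasonable but unnecessary for the deduction itself.
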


\noindent
This result is optimal, as demonstrated by the following consequence of the Geometrization Theorem and a theorem of Gromov \cite[2.3.F]{gromov_hyperbolic_1987} (see also~\cite{papsoglu_sub-quadratic_1995}).

\begin{theorem}
For an  irreducible $3$-manifold $N$, the following are equivalent:
\bnc
\item $\pi_1N$ satisfies a sub-quadratic isoperimetric inequality;
\item $\pi_1N$ satisfies a linear isoperimetric inequality;
\item $N$ is hyperbolic.
\enc
\end{theorem}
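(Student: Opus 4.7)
The equivalence $(1) \Leftrightarrow (2)$ follows from Gromov's celebrated \emph{isoperimetric gap theorem}: a finitely presented group whose Dehn function has sub-quadratic growth must in fact be word-hyperbolic, and so satisfy a \emph{linear} isoperimetric inequality. The direction $(2)\Rightarrow(1)$ is immediate from the definitions, while $(1)\Rightarrow(2)$ is the deep content that we import wholesale from the cited work of Gromov \cite{gromov_hyperbolic_1987} and Papasoglu \cite{papsoglu_sub-quadratic_1995}; I will treat it as a black box.

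For $(3)\Rightarrow(2)$, suppose $N$ is a closed hyperbolic $3$-manifold. Then $\pi_1 N$ acts properly and cocompactly by isometries on $\Hyp^3$, and since $\Hyp^3$ is a $\delta$-hyperbolic geodesic metric space the \v{S}varc--Milnor lemma shows that $\pi_1 N$ is word-hyperbolic. Every word-hyperbolic group satisfies a linear isoperimetric inequality, as required. (Here ``hyperbolic'' is being used in the closed sense; a cusped hyperbolic manifold with toroidal boundary carries $\Z^2$-subgroups in $\pi_1$ and so its Dehn function is at best quadratic by Corollary~\ref{cor: Quadratic isoperimetric inequality}. The equivalence is most naturally stated for closed $N$.)

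The main content is $(2)\Rightarrow(3)$. Set $\pi := \pi_1 N$, word-hyperbolic by hypothesis, and recall that word-hyperbolic groups contain no copy of $\Z^2$. Apply the Geometrization Theorem (Theorem~\ref{thm:geom}) to decompose $N$ along its JSJ tori into geometric pieces. Each JSJ torus is incompressible and yields an injection $\Z^2 \hookrightarrow \pi$, as does any toroidal boundary component; absence of $\Z^2$ in $\pi$ therefore forces $N$ to have no JSJ tori and no toroidal boundary, so $N$ is a single geometric piece. It remains to rule out the non-hyperbolic Thurston geometries. If $N$ were Seifert fibered with infinite fundamental group, a regular Seifert fibre would give an infinite cyclic normal subgroup of $\pi$; but a word-hyperbolic group admitting an infinite cyclic normal subgroup must be virtually cyclic, contradicting the structure of fundamental groups of closed irreducible Seifert fibered $3$-manifolds whose base orbifold is not spherical. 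If $N$ admitted Sol- or Nil-geometry, then a finite cover of $N$ would be a torus bundle over $S^1$, and the torus fibre would produce a $\Z^2$-subgroup of $\pi$, again a contradiction. The only remaining possibility is that $N$ is hyperbolic.

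The principal obstacle is the gap theorem $(1)\Rightarrow(2)$, which is the only genuinely hard ingredient and which I treat as a black box. The argument $(2)\Rightarrow(3)$ is then essentially a structural bookkeeping exercise, combining the ``no $\Z^2$'' property of word-hyperbolic groups, the Geometrization Theorem, and elementary facts about centralisers in Seifert fibered and solvable $3$-manifold groups.
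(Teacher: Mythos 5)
Your overall route is exactly the one the paper intends: the paper gives no written-out proof of this theorem, only the attribution to the Geometrization Theorem together with Gromov's sub-quadratic gap theorem, and your argument fleshes out precisely that combination --- $(1)\Leftrightarrow(2)$ via the gap theorem, $(3)\Rightarrow(2)$ via \v{S}varc--Milnor, and $(2)\Rightarrow(3)$ by using the absence of $\Z^2$ in a word-hyperbolic group to kill the JSJ tori and the non-hyperbolic geometries. Your caveat about cusped manifolds is also correct and worth making: with the paper's definition of ``hyperbolic'' (interior admits a complete metric of curvature $-1$), a finite-volume cusped hyperbolic manifold has peripheral $\Z^2$ subgroups and a quadratic Dehn function, so $(3)\Rightarrow(1)$ fails there and the statement should be read for closed $N$.

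There is, however, a genuine gap in your $(2)\Rightarrow(3)$: you never address finite fundamental groups. Your case analysis covers ``Seifert fibered with infinite fundamental group'' and the Sol and Nil geometries, but an irreducible $3$-manifold can be spherical --- $S^3$, a lens space, the Poincar\'e sphere --- in which case $\pi_1N$ is finite, hence word-hyperbolic, hence satisfies a linear isoperimetric inequality, while $N$ is certainly not hyperbolic. So the implication $(2)\Rightarrow(3)$ is false as literally stated, and no argument can close this case; it has to be excluded by hypothesis, exactly as you excluded the cusped case from $(3)\Rightarrow(2)$. The clean statement requires $N$ to be closed, irreducible and with \emph{infinite} fundamental group (equivalently, aspherical); with that proviso your structure argument goes through. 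One further small point: when you rule out Seifert fibered pieces with infinite $\pi_1$, the step ``word-hyperbolic with an infinite cyclic normal subgroup implies virtually cyclic'' is correct, but you should finish it by observing that a closed irreducible $3$-manifold with infinite virtually cyclic (two-ended) fundamental group cannot exist, since such a manifold would be finitely covered by $S^2\times S^1$, contradicting irreducibility.
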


\noindent
The $\Nil$ and $\Sol$ cases are also known: $\Nil$-manifolds have cubic Dehn functions  and $\Sol$ manifolds have exponential Dehn functions; see Example~8.1.1 and Theorem~8.1.3 of \cite{epstein_word_1992}, respectively.

 A third approach to the Word Problem is provided by non-positive curvature: if $N$ admits a non-positively curved metric then, by the Filling Theorem alluded to above, $\pi_1N$ admits a quadratic isoperimetric inequality.  By Theorem~\ref{thm:npc} this gives a solution to the Word Problem for a very large class of $3$-manifolds.

\medskip
\noindent
We note some consequences of the solvability of the uniform Word Problem for $3$-manifold groups:

\begin{corollary}\label{cor:decide triviality}
There is an algorithm which upon input of a finite presentation of a $3$-manifold group $\pi$, decides whether $\pi$ is trivial, respectively abelian. 
\end{corollary}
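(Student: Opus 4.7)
The plan is to reduce both questions directly to the uniform Word Problem for $3$-manifold groups, which is available by Theorem~\ref{thm: Word Problem}. Given a finite presentation $\langle A\mid R\rangle$ of a $3$-manifold group $\pi$, I would first observe that $\pi$ is trivial if and only if every generator $a\in A$ represents the identity element of $\pi$, and that $\pi$ is abelian if and only if every commutator word $[a,b]=aba^{-1}b^{-1}$, for $a,b\in A$, represents the identity element of~$\pi$. Since $A$ is finite, each check involves only finitely many words.

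The algorithm then runs as follows. To decide triviality, I would feed each pair $(\langle A\mid R\rangle,a)$ with $a\in A$ into the uniform Word Problem algorithm of Theorem~\ref{thm: Word Problem}, and output YES precisely when all these runs return YES. To decide abelianness, I would feed each pair $(\langle A\mid R\rangle,[a,b])$ with $a,b\in A$ into the same algorithm and output YES precisely when all runs return YES. Termination in every case is guaranteed by the uniform solvability of the Word Problem, and correctness follows from the fact that a group is trivial (respectively abelian) exactly when its generators (respectively their pairwise commutators) all vanish.

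There is no genuine obstacle here, as the content of the corollary lies entirely in Theorem~\ref{thm: Word Problem}; the only minor point worth recording is that the input presentation need not come equipped with a certificate that it does present a $3$-manifold group, but this is not required, because the uniform Word Problem algorithm is assumed to work on any input from the class and we are told the input lies in the class.
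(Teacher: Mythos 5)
Your proposal is correct and matches the paper's own proof: both reduce triviality to checking that each generator represents the identity, and abelianness to checking that each pairwise commutator of generators does, using the uniform solution to the Word Problem from Theorem~\ref{thm: Word Problem}. No further comment is needed.
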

\begin{proof}
A finitely generated group $G$ is trivial iff each generator is trivial, and $G$ is abelian iff each pair of generators commutes. Both conditions can be effectively verified using an algorithm for the Word Problem.
\end{proof}

\subsection{The Conjugacy Problem}
The second of Dehn's decision problems  also has a positive solution for $3$-manifold groups:

\begin{theorem}
The Conjugacy Problem is uniformly solvable in the class of fundamental groups of  $3$-manifolds.
\end{theorem}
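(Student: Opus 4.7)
The plan is to follow the template of the proof of Theorem~\ref{thm: Word Problem}, replacing residual finiteness by the stronger property of \emph{conjugacy separability}: a group $\pi$ is conjugacy separable if, for any two non-conjugate elements $g,h\in\pi$, there exists a morphism $\alpha\colon\pi\to G$ onto a finite group $G$ with $\alpha(g)$ and $\alpha(h)$ non-conjugate in $G$.

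First I would establish a McKinsey-style lemma: the Conjugacy Problem is uniformly solvable in the class of finitely presented conjugacy separable groups with uniformly solvable Word Problem. The proof mirrors Lemma~\ref{lem:resfinite}. Given a finite presentation $\ll A\mid R\rr$ and words $w_1,w_2\in F(A)$, run two procedures in parallel: (i) enumerate elements $g\in F(A)$ and, using the Word Problem algorithm, check whether $g w_1 g^{-1}w_2^{-1}$ represents the identity in $\pi$; (ii) enumerate morphisms $\alpha\colon\pi\to G$ to finite groups $G$ and check whether $\alpha(w_1)$ and $\alpha(w_2)$ are conjugate in $G$. If $w_1$ and $w_2$ are conjugate then (i) terminates; otherwise, conjugacy separability guarantees that (ii) terminates. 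Since the Word Problem is uniformly solvable for $3$-manifold groups by Theorem~\ref{thm: Word Problem}, the Conjugacy Problem would then reduce to showing that every $3$-manifold group is conjugacy separable.

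The main task is therefore to establish conjugacy separability for $\pi_1 N$ in general. I would argue by reducing along the structural decomposition of $N$. By Stebe's theorem, free products of conjugacy separable groups are conjugacy separable, so the Prime Decomposition Theorem (Theorem~\ref{thm:prime}) reduces the problem to prime $3$-manifolds; the case $S^1\times S^2$ is trivial. For irreducible $N$ with empty or toroidal boundary, Geometrization (Theorem~\ref{thm:geom}) supplies the JSJ decomposition. Hyperbolic JSJ pieces are handled by Theorem~\ref{thm:raag}: since $\pi_1 N$ is virtually a virtual retract of a right-angled Artin group, and Minasyan proved right-angled Artin groups to be conjugacy separable, and conjugacy separability passes to finite-index subgroups and to virtual retracts, hyperbolic pieces are conjugacy separable. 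For Seifert fibered pieces and $\Sol$-manifolds, conjugacy separability is classical work of Stebe, Martino, and Allenby--Tang. Finally, to assemble the pieces along the JSJ tori one invokes a combination theorem of Hamilton--Wilton--Zalesskii for fundamental groups of graphs of conjugacy separable groups with controlled (here, abelian) edge groups.

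The principal obstacle is this last combination step. Conjugacy separability along the JSJ is considerably more delicate than the corresponding residual finiteness statement underlying Theorem~\ref{thm:Hempel}, because one must control not only elements of vertex groups but also how their conjugacy classes interact with peripheral (edge) subgroups and with their images in finite quotients; non-conjugate elements in a vertex group may become conjugate through long edge-group amalgamations, and the compatible systems of finite quotients needed to rule this out require the full strength of the Virtually Compact Special Theorem. As a quantitative alternative, one could instead follow Préaux's approach via biautomatic normal forms in graph-of-groups decompositions, which circumvents the combination theorem at the cost of an explicit case analysis of the JSJ structure but yields complexity bounds analogous to Corollary~\ref{cor: Quadratic isoperimetric inequality}.
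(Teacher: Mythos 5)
Your strategy for orientable manifolds is essentially the paper's second proof: conjugacy separability of orientable $3$-manifold groups (established in \cite{hamilton_separability_2011} using the Virtually Compact Special Theorem and Minasyan's work \cite{Min12}) combined with a McKinsey-style double enumeration as in Lemma~\ref{lem:resfinite}. That part is sound, with one technical caveat: your assertion that ``conjugacy separability passes to finite-index subgroups and to virtual retracts'' is false in general --- unlike residual finiteness, conjugacy separability is \emph{not} inherited by finite-index subgroups or overgroups (this is exactly why Minasyan works with \emph{hereditary} conjugacy separability of right-angled Artin groups, and why the paper inserts the parenthetical warning that solvability of the Conjugacy Problem does not automatically pass to finite extensions). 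The hyperbolic-piece step and the reduction from a Seifert fibered piece to its product cover both need the hereditary version to go through.

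The genuine gap is the non-orientable case, which your argument does not reach. Every structural reduction you invoke --- the Prime Decomposition Theorem as stated (Theorem~\ref{thm:prime}), Geometrization (Theorem~\ref{thm:geom}), and the Hamilton--Wilton--Zalesskii combination theorem --- is formulated for orientable manifolds, and you cannot repair this by passing to the orientation double cover precisely because neither conjugacy separability nor solvability of the Conjugacy Problem descends from a finite-index subgroup to the ambient group. The paper's actual proof of the theorem in full generality is to cite Pr\'eaux, who solved the orientable case in \cite{preaux_conjugacy_2006} and then had to write a separate paper \cite{preaux_conjugacy_2012} specifically to handle non-orientable $3$-manifolds; the separability route is offered only as an alternative proof in the orientable case. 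A further small inaccuracy: Pr\'eaux's method is not ``biautomatic normal forms'' but an explicit algorithmic analysis of conjugacy in the JSJ graph of groups extending Sela's work on knot groups; indeed, whether automatic $3$-manifold groups are biautomatic is recorded in the paper as an open question, so a biautomaticity-based fallback is not currently available.
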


\noindent
Pr\'{e}aux, extending Sela's work on knot groups \cite{sela_conjugacy_1993}, proved that the Conjugacy Problem is solvable, first for the fundamental groups of orientable $3$-manifolds \cite{preaux_conjugacy_2006}, and then also for the fundamental groups of non-orientable  $3$-manifolds \cite{preaux_conjugacy_2012}.  (Note that, in contrast to many other group properties, solvability of the Conjugacy Problem does not automatically pass to finite extensions.)  Although he does not explicitly state it, Pr\'{e}aux's solution to the Conjugacy Problem for $3$-manifold groups is uniform.

\medskip 
\noindent
For orientable $3$-manifolds we can also give a proof of this theorem which is analogous to the first proof in the previous section.   More precisely, building on the Virtually Compact Special Theorem and work of Minasyan \cite{Min12} it was shown in  \cite[Theorem~1.3]{hamilton_separability_2011} that  the  fundamental group  of any  orientable $3$-manifold $N$ is \emph{conjugacy separable.}\/  This means that given any non-conjugate $g,h\in \pi_1N$ there exists a morphism $\a\colon \pi_1N\to G$ to a finite group $G$ such that $\a(g)$ and $\a(h)$ are non-conjugate. It now follows from an argument similar to the proof of  Lemma~\ref{lem:resfinite} that the Conjugacy Problem is uniformly solvable for fundamental groups of orientable $3$-manifolds.  

\medskip
\noindent
As in the last section, if a $3$-manifold $N$ admits a non-positively curved metric, then \cite[Theorem III.$\Gamma$.1.12]{bridson_metric_1999}
gives another solution to the conjugacy problem for $\pi_1N$. However, as the constants involved in this theorem depend on the non-positively curved metric on $N$, this does not \emph{a priori} give a \emph{uniform} solution.

\medskip
\noindent
Finally, the Conjugacy Problem is not known to be solvable for automatic groups \cite[Open Question 2.5.8]{epstein_word_1992}.  The Conjugacy Problem is known to be solvable for biautomatic groups, but it is unknown whether automatic $3$-manifold groups are biautomatic \cite[Question 9.33]{AFW12}. 

\subsection{The Membership Problem}\label{sec:membership problem}

The following theorem was recently proved in \cite{FW14}.
It should be contrasted with the fact that the Membership Problem is not solvable even for fairly simple groups like 
the direct product $F_2\times F_2$ of two copies of the free group $F_2$ on two generators
\cite{Mi58,Mi66}.

\begin{theorem}\label{thm: MP}\label{thm:mp}
The Membership Problem is uniformly solvable in the class of fundamental groups of $3$-manifolds.
\end{theorem}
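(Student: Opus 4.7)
The plan is to stratify the Membership Problem via the geometric decomposition theorems of Section \ref{section:3manifolds}, feed the resulting graph-of-groups structure into the folding algorithm of Kapovich--Miasnikov--Weidmann, and handle the vertex groups by exploiting either central-extension structure (Seifert case) or the separability and structural dichotomy of Theorem \ref{thm:hypsubgroup} (hyperbolic case).  First I would reduce to the closed orientable case: Lemma \ref{lem: Groups to retracts} (applied, if necessary, after passing to an index-two subgroup to ensure orientability) embeds $\pi$ as a retract of $\pi_1 N$ for some closed orientable $3$-manifold $N$, via an algorithmically computable retraction $r\co \pi_1 N \to \pi$.  Since $r$ fixes $\pi$ pointwise, membership of $g\in \pi$ in $\langle g_1,\ldots,g_k\rangle\le \pi$ is equivalent to membership in the same subgroup considered inside $\pi_1 N$.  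So it suffices to solve Membership uniformly for closed orientable $3$-manifolds, and to that end I would apply Theorem \ref{thm:findprimedecomposition} followed by Theorem \ref{thm:findjsjdecomposition} to each prime summand.  This exhibits $\pi_1 N$ as the fundamental group of a computable graph of groups whose edge groups are trivial or $\Z^2$ and whose vertex groups are fundamental groups of Seifert fibered or hyperbolic pieces.  The Kapovich--Miasnikov--Weidmann folding procedure \cite{kapovich_foldings_2005} reduces Membership in $\pi_1 N$ to Membership in each vertex group $\pi_v$, plus the auxiliary task of computing, for a finitely generated $\Gamma\le \pi_v$ and an incident edge group $E\le \pi_v$, generators for $\Gamma\cap E$ and coset representatives of $\Gamma\cap E$ inside $E$.

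For Seifert fibered vertex groups, these computations are handled by classical algorithms in the central extension $1\to \Z\to \pi_v\to \pi_1^{\mathrm{orb}}(\Sigma)\to 1$.  For hyperbolic vertex groups, I would invoke Theorem \ref{thm:hypsubgroup}: any finitely generated $\Gamma\le \pi_v$ is either a virtual retract of $\pi_v$, or sits normally in a finite-index subgroup of $\pi_v$ with infinite cyclic quotient; in either case $\Gamma$ is separable.  Separability, combined with residual finiteness (Theorem \ref{thm:Hempel}) and systematic enumeration of finite quotients, yields a Membership algorithm for $\Gamma$ in the style of Lemma \ref{lem:resfinite}: enumerate products of the $g_i$ to detect positive membership, and enumerate morphisms to finite groups to detect non-membership.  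The same structural dichotomy makes the intersections $\Gamma\cap E$ effectively computable: apply the retraction to (the generators of) $E$ in the virtual retract case, or exploit the fact that $\Gamma\cap E$ has finite index in $E$ and can be read off the cyclic quotient in the geometrically infinite case (using Lemma \ref{lem:finiteindexkernelcokernel}).

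The main obstacle is converting the purely existential content of Theorem \ref{thm:hypsubgroup} into an \emph{effective} procedure that actually produces the retractions, the finite-index normal subgroups, and the finite quotients realizing separability, and then arguing that the folding procedure fed with these primitives terminates uniformly across all $3$-manifold inputs rather than just for each individual JSJ piece.  The first task requires unpacking the virtual specialness provided by the Virtually Compact Special Theorem together with the Tameness Theorem to recognize the geometrically finite/infinite dichotomy algorithmically; the second requires a carefully designed complexity invariant that decreases under folding moves so that termination can be certified for the composite algorithm.  Together these constitute the main technical content of \cite{FW14}.
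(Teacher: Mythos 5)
Your proposal follows essentially the same route as the paper: reduce to the closed orientable case via a finite-index subgroup and doubling, split off the prime decomposition, compute the JSJ decomposition, and verify that the resulting graph of groups satisfies the Kapovich--Miasnikov--Weidmann hypotheses, using separability and the dichotomy of Theorem~\ref{thm:hypsubgroup} for the vertex groups. Three points of imprecision are worth flagging. First, the KMW ``benign'' hypotheses require not only the intersection computations you list but also an algorithm for membership in \emph{double cosets} $HG_e$ in each vertex group (condition (B1)); the paper obtains this from the separability of products of two finitely generated subgroups (Niblo in the Seifert fibered case, Wise combined with Hruska in the hyperbolic case), which is genuinely stronger than the subgroup separability you invoke. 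Second, in the geometrically infinite case $\Gamma\cap E$ is the kernel of a homomorphism from $E_0\cong\Z^2$ to $\Z$, hence typically of \emph{infinite} index in $E$ --- your ``finite index'' claim is wrong, although the linear-algebra computation you describe is still the correct one. Third, in the virtual-retract case one has $\Gamma\cap E=\rho(E_0)\cap E_0$ rather than $\rho(E_0)$; evaluating this intersection requires the additional observations that the peripheral subgroup is maximal abelian (so that $[\rho(E_0),E_0]=1$ forces $\rho(E_0)\subseteq E_0$) and that centralizers in hyperbolic $3$-manifold groups are abelian (so that otherwise the intersection is trivial). None of these affects the overall architecture, which matches the paper's.
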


\noindent
In Theorem \ref{thm:hypsubgroup} we saw that fundamental groups of hyperbolic $3$-mani\-folds are subgroup separable.   Using the argument of the proof of Lemma~\ref{lem:resfinite} we then obtain a solution to the uniform Membership Problem for fundamental groups of hyperbolic $3$-manifolds. This approach works also for fundamental groups of Seifert fibered spaces, which are subgroup separable by work of Scott~\cite{Sc78}.  The argument can also be used  for special classes of subgroups, e.g., subgroups carried by embedded surfaces~\cite{PW14}, but this approach does not work in general.  For example, Niblo--Wise \cite[Theorem~4.2]{NW01}  showed that the fundamental groups of most graph manifolds are not subgroup separable.  We thus see that we cannot hope to prove Theorem~\ref{thm: MP} in the general case
by appealing to separability properties only. 

\medskip
\noindent
In the following we will summarize the proof 
of Theorem \ref{thm:mp}.
 We refer to \cite{FW14} for full details and for a careful discussion of the fact that we can give a uniform solution to the membership problem.

\medskip
\noindent
In the proof of 
Theorem~\ref{thm: MP} we employ the following useful lemma.

\begin{lemma}\label{lem: MP passing to fi subgroups}
Let $\mathcal{G}$ be a class of finitely presentable groups, 
and let $\mathcal{R}$ be a set of finite presentations of groups. Suppose that every group in $\mathcal{G}$ is virtually isomorphic to a group presented by some element of~$\mathcal{R}$.  If
\begin{compactitem}
\item the Membership Problem is uniformly solvable in the class of groups presented by elements of $\mathcal{R}$, and 
\item $\mathcal{R}$ is recursively enumerable,
\end{compactitem}
then the Membership Problem is also uniformly solvable in $\mathcal{G}$.
\end{lemma}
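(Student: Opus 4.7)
The strategy is to reduce the Membership Problem in a group $\pi\in\mathcal{G}$ to the Membership Problem in a group presented by an element of $\mathcal{R}$, by passing to a finite-index subgroup. Given a presentation $\langle A\mid R\rangle$ of $\pi\in\mathcal{G}$, words $w_1,\ldots,w_k\in F(A)$ representing generators $g_1,\ldots,g_k$ of a subgroup $H\leq\pi$, and a further word $v$, the aim is to decide whether $v\in H$.

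The first step is to find a finite-index subgroup $\pi'\leq\pi$ whose isomorphism type is available in $\mathcal{R}$. To do this I would simultaneously enumerate finite-index subgroups of $\pi$ (with finite presentations and coset representatives, produced by running over epimorphisms $\pi\to G$ onto finite groups and applying the Reidemeister--Schreier process), presentations drawn from $\mathcal{R}$, and candidate morphisms between them. By the argument of Lemma~\ref{lem:findiso, rec enum}, the virtual-isomorphism hypothesis combined with the recursive enumerability of $\mathcal{R}$ guarantees that this search eventually terminates, yielding a finite-index subgroup $\pi'\leq\pi$, a presentation $\langle B\mid S\rangle\in\mathcal{R}$ of a group $\rho$, and an explicit isomorphism $\psi\colon\pi'\to\rho$ together with its inverse.

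Next I would identify $H\cap\pi'$ together with coset representatives for it in $H$. Apply Lemma~\ref{lem:finiteindexkernelcokernel} to the free group $\Gamma:=F(x_1,\ldots,x_k)$ and the morphism $\varphi\colon\Gamma\to\pi$ sending $x_i\mapsto g_i$, using $\pi'$ as the finite-index subgroup of $\pi$. This produces a finite presentation for $\Gamma_0:=\varphi^{-1}(\pi')$ and coset representatives $\gamma_1,\ldots,\gamma_m$ for $\Gamma_0$ in $\Gamma$. Setting $h_i:=\varphi(\gamma_i)$ and $H_0:=H\cap\pi'=\varphi(\Gamma_0)$, the $h_i$ are coset representatives for $H_0$ in $H$, while the $\varphi$-images of the generators of $\Gamma_0$ give an explicit finite generating set for $H_0\leq\pi'$.

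Finally, note that $v\in H$ if and only if $h_i^{-1}v\in H_0$ for some $i$. For each such $i$, first test whether $h_i^{-1}v\in\pi'$ using the precomputed coset representatives of $\pi'$ in $\pi$; if so, rewrite $h_i^{-1}v$ via Reidemeister--Schreier as a word in the generators of $\pi'$, apply $\psi$ to turn this word and the generators of $H_0$ into words in $\rho$, and invoke the uniform Membership Problem algorithm for $\mathcal{R}$ on these data. The main obstacle is the first step: one must search simultaneously over two unknowns (a finite-index subgroup of $\pi$ and a matching target in $\mathcal{R}$), with termination resting entirely on the virtual-isomorphism hypothesis and the recursive enumerability of $\mathcal{R}$. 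Uniformity of the resulting algorithm follows because every step of the procedure is driven algorithmically by the input presentation~$\langle A\mid R\rangle$.
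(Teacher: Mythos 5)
Your proposal is correct and follows essentially the same route as the paper: enumerate finite-index subgroups of $\pi$ until one is confirmed (via the search of Lemma~\ref{lem:findiso, rec enum}) to be presented by an element of $\mathcal{R}$, use Lemma~\ref{lem:finiteindexkernelcokernel} to get generators of $H_0=H\cap\pi'$ and coset representatives $h_i$ of $H_0$ in $H$, and reduce the query $v\in H$ to finitely many queries $h_i^{-1}v\in H_0$ decided by the algorithm for $\mathcal{R}$. You merely spell out two details the paper leaves implicit, namely applying Lemma~\ref{lem:finiteindexkernelcokernel} to a free group surjecting onto $H$ and carrying the explicit isomorphism $\pi'\to\rho$ through the rewriting; both are correct.
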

\begin{proof}
Consider a group $\pi$ in $\mathcal{G}$.  Using the Reidemeister--Schreier procedure, we may enumerate all subgroups of finite index in $\pi$.  Because $\mathcal{R}$ is recursively enumerable, we will eventually find one, $\pi_0$ say, that we may confirm is in~$\mathcal{R}$, using Lemma~\ref{lem:findiso, rec enum}.

Let $H$ be a finitely generated subgroup of $\pi$, specified by a finite set of elements. By Lemma~\ref{lem:finiteindexkernelcokernel}, we may compute a generating set for $H_0= H\cap\pi_0$ and a set of (left) coset representatives $h_1,\ldots, h_k$ for $H_0$ in $H$.  Now, let $g\in \pi$ be given. For each $i$, we may determine whether  $h^{-1}_ig\in\pi_0$, since membership in subgroups of finite index is always decidable.  If there is no such $i$ then evidently $g\notin H$.  Otherwise, if $h^{-1}_ig\in\pi_0$ then, using the solution to the Membership Problem in $\pi_0$, we may determine whether   $h^{-1}_ig\in H_0$. Since $g\in H$ if and only if $h^{-1}_ig\in H_0$ for some $i$, this solves the Membership Problem in $\pi$.
\end{proof}

\noindent
Solvability of the Membership Problem in fundamental groups of graphs of groups was addressed by Kapovich--Miasnikov--Weidmann \cite{kapovich_foldings_2005}, who gave the following definition. See, e.g., \cite[Section~1.2]{AF13} or
\cite{serre}
for the definition of a graph of groups.

\begin{definition}
Consider a graph of finitely generated groups.  In the following, $G_v$ is a vertex group and $G_e$ is an incident edge group. Such a graph of finitely generated groups is \emph{benign} if:
\begin{itemize}
\item[(B1)] for each vertex $v$ there is an algorithm to test membership of double cosets of the form $HG_e$ in $G_v$, where $e$ is an incident edge and $H$ is a finitely generated subgroup of $G_v$;
\item[(B2)] for each edge $e$ the group $G_e$ is \emph{slender}, meaning that every subgroup of $G_e$ is finitely generated;
\item[(B3)] for each edge $e$ there is an algorithm to solve the Membership Problem in the edge group $G_e$;
\item[(B4)] for each vertex $v$ and incident edge $e$, there is an algorithm to compute generating sets for intersections $H\cap G_e$ where $H$ is a finitely generated subgroup of $G_v$.
\end{itemize}
\end{definition}

\noindent
The following is~\cite[Theorem 5.13]{kapovich_foldings_2005}:

\begin{theorem}[Kapovich--Miasnikov--Weidmann]\label{thm: KMW}
The Membership Problem is solvable for fundamental groups of benign graphs of groups in which every vertex group has solvable Membership Problem.
\end{theorem}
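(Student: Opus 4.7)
The proof naturally proceeds by a Stallings-style folding algorithm adapted to the graph of groups setting. Given a finitely generated subgroup $H$ of $\pi = \pi_1(\mathcal{G})$ specified by generators $w_1,\ldots,w_k$, my plan is to algorithmically construct a \emph{folded subgroup graph of groups} $\mathcal{B}$ mapping to $\mathcal{G}$ whose fundamental group is isomorphic to $H$ and which carries enough information to decide which elements of $\pi$ lie in $H$. Each generator $w_i$, rewritten in Bass--Serre normal form, is a concatenation of vertex-group elements and edge-traversals; taking a wedge of such loops at a chosen basepoint gives an initial subgroup graph of groups $\mathcal{B}_0$ whose fundamental group surjects onto $H$.

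The core of the proof is a list of elementary folding moves on $\mathcal{B}$ that identify edges and enlarge vertex groups while preserving the subgroup of $\pi$ being represented. Each folding move uses exactly the hypotheses in the definition of benignness. A fold that collapses two edges $e_1, e_2$ at a vertex $v$ with vertex group $H_v \leq G_v$ is possible precisely when the corresponding edge-group labels lie in the same $H_v$-coset in a relevant double coset $H_v G_e$, which by (B1) can be tested algorithmically. The fold then enlarges $H_v$; (B4) allows us to compute the intersection of the new vertex group with an incident edge group, and (B3) together with (B2) ensures that the induced subgroup of $G_e$ is finitely generated and can be tracked throughout. In addition, one must perform subdivisions and edge-pullings whose legitimacy again rests on (B1) and (B4). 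Iterating these moves produces a sequence $\mathcal{B}_0 \to \mathcal{B}_1 \to \cdots$, each of which represents the same subgroup of $\pi$.

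The main obstacle is proving termination: in the free group setting, each fold decreases the number of edges, but here folds can enlarge vertex groups without dropping the edge count. The key observation is that, thanks to the slenderness condition (B2), every edge group is Noetherian, so the sequence of inclusions among the edge-group labels in any given edge must stabilize; combined with a carefully chosen complexity (lexicographically on the number of edges, ranks of edge-group traces, etc.) one shows that no infinite folding sequence is possible. Once $\mathcal{B}$ is folded, a pattern-matching argument shows that a reduced Bass--Serre word $u$ representing $g \in \pi$ lies in $H$ if and only if $u$ lifts to a closed path in $\mathcal{B}$ based at the chosen basepoint; verifying such a lift step-by-step requires only membership tests in vertex groups (which are given by hypothesis), together with (B1) and (B4) to read off the edge maps, and hence is effective.

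Finally, using Lemma~\ref{lem: MP passing to fi subgroups}, one may even pass to a finite-index subgroup of $\pi$ presented as a benign graph of groups if convenient; but the direct algorithm above already gives the claim, yielding a uniform solution whenever the benign graph-of-groups data and the vertex-group membership algorithms are presented effectively.
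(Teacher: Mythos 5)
This theorem is not proved in the survey at all --- it is quoted verbatim as Theorem 5.13 of Kapovich--Miasnikov--Weidmann \cite{kapovich_foldings_2005} --- and your sketch is a faithful outline of exactly the argument given there: build an initial graph-of-groups representative of $H$ from Bass--Serre normal forms, fold it using moves whose effectiveness is guaranteed by (B1)--(B4), prove termination via the edge-count together with the ascending chain condition on subgroups of the slender edge groups, and then decide membership by attempting to read $g$ as a loop in the folded object. So your proposal is correct and takes essentially the same approach as the paper's source; the only caveat is that the details (the precise list of folding moves and the complexity function) live in \cite{kapovich_foldings_2005} rather than in anything you or the survey have written out.
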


\begin{remark}
The given solution is not \emph{a priori} uniform.  However, it depends only on the graph of groups and the algorithms guaranteed by the hypotheses.  If these can be found uniformly, then the solution to the Membership Problem is indeed uniform.
\end{remark}

\noindent
For future reference we record the following immediate corollary first proved by Miha{\u\i}lova~\cite{Mi59,Mi68}; 
see also  \cite[Corollary 5.16]{kapovich_foldings_2005}.

\begin{corollary}\label{cor:solvablempproblemandfreeproducts}
Solvability of the Membership Problem is preserved under taking free products.
\end{corollary}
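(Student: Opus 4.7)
The plan is to realize the free product $G_1 * G_2$ of two finitely generated groups with solvable Membership Problem as the fundamental group of a particularly simple graph of groups, and then invoke Theorem~\ref{thm: KMW}. Specifically, I would take the graph with two vertices labelled by $G_1$ and $G_2$, joined by a single edge whose edge group is trivial; the resulting fundamental group is $G_1 * G_2$ by the standard Bass--Serre theory (see, e.g., \cite{serre}). The bulk of the work is then to check that this graph of groups is benign in the sense of Kapovich--Miasnikov--Weidmann.

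Each of the four conditions (B1)--(B4) becomes essentially trivial because the edge group $G_e$ is the trivial group. For (B1), the double coset $H G_e$ coincides with the subgroup $H$, so the required double coset membership test reduces to the Membership Problem in $G_v \in \{G_1, G_2\}$, which is solvable by hypothesis. Condition (B2) holds since the trivial group is vacuously slender, and (B3) holds since the Membership Problem in the trivial group is decidable. For (B4), the intersection $H \cap G_e$ is trivial for every subgroup $H$, so the empty set serves as a generating set. Having verified that the graph of groups is benign with vertex groups having solvable Membership Problem, Theorem~\ref{thm: KMW} immediately yields the solvability of the Membership Problem in $G_1 * G_2$.

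The only potential obstacle, to the extent that there is one, is ensuring that the solution is genuinely algorithmic and, if one wishes, uniform: the generating set, presentation, and membership algorithm for $G_1 * G_2$ must be effectively constructed from the corresponding data for $G_1$ and $G_2$. This is straightforward because the presentation of $G_1 * G_2$ is obtained by taking the disjoint union of presentations of $G_1$ and $G_2$, and the Membership Problem algorithm supplied by Theorem~\ref{thm: KMW} depends only on the graph of groups and the given vertex-group algorithms, per the remark following that theorem. Iterating, the result extends to free products of finitely many factors.
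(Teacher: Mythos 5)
Your argument is correct and is exactly the one the paper intends: the paper records this as an ``immediate corollary'' of Theorem~\ref{thm: KMW} (citing Miha{\u\i}lova and \cite[Corollary~5.16]{kapovich_foldings_2005}), and realizing $G_1 * G_2$ as a graph of groups with a single trivial edge group, for which (B1)--(B4) degenerate to the hypotheses, is precisely how that corollary is obtained. Your closing remarks on uniformity and iteration to finitely many factors are also consistent with the remark following Theorem~\ref{thm: KMW}.
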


\noindent
We  now argue  that the JSJ graph of groups of a closed $3$-manifold is benign.

Condition (B1) follows from an argument as in Lemma~\ref{lem:resfinite}
and the fact that given a  fundamental group of a Seifert fibered space or a hyperbolic $3$-manifold, any product of two finitely generated subgroups is separable. In the Seifert fibered case this was proved by 
Niblo~\cite{Ni92} building on the aforementioned work of  Scott~\cite{Sc78}. 
In the hyperbolic case this is an extension of 
 Theorem~\ref{thm:hypsubgroup}, which follows from work of Wise~\cite[Theorem~16.23]{Wi12a} combined with work of Hruska~\cite[Corollary~1.6]{Hr10}.
 
Condition~(B2) is immediate, and (B3) follows from the fact that the Membership Problem in $\Z^2$ can be solved easily using basic algebra.

Therefore, to prove the solvability of the Membership Problem in $3$-manifold groups, it now remains to address~(B4).  We deal with the Seifert fibered and hyperbolic cases separately, in the following two lemmas.

\begin{lemma}\label{lem: Intersections for hyperbolic manifolds}
Let $N$ be a hyperbolic $3$-manifold with  toroidal boundary. Let~$P$ be a cusp subgroup of $\pi=\pi_1N$.  There is an algorithm, uniform in $\pi$ and $P$, that takes as input a finite set of elements that generate a subgroup~$\Gamma$ of~$\pi$, and computes a generating set for $\Gamma\cap P$.
\end{lemma}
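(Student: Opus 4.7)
The plan is to invoke the dichotomy of Theorem~\ref{thm:hypsubgroup}: either $\Gamma$ is a virtual retract of~$\pi$, or there is a finite-index subgroup $\pi'\leq\pi$ with $\Gamma\lhd\pi'$ and $\pi'/\Gamma\cong\Z$. These cases can be distinguished algorithmically by parallel search: using the Reidemeister--Schreier procedure, enumerate finite-index subgroups $\pi'$ of~$\pi$, and for each, enumerate candidate homomorphisms (specified by their action on generators) defining either a retraction $r\colon\pi'\twoheadrightarrow\Gamma$ or an epimorphism $q\colon\pi'\twoheadrightarrow\Z$ with $\Gamma\subseteq\ker q$. The solvability of the Word Problem in~$\pi$ (Theorem~\ref{thm: Word Problem}) permits verification of the defining relations of~$\pi'$, of $r|_\Gamma=\mathrm{id}_\Gamma$ for a candidate retraction, and of $\Gamma\subseteq\ker q$ for a candidate epimorphism. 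By Theorem~\ref{thm:hypsubgroup}, one of these searches terminates.

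Having located $\pi'$, set $P':=P\cap\pi'$; this is a finite-index subgroup of $P\cong\Z^2$ with generators computable via Lemma~\ref{lem:finiteindexkernelcokernel}. Since $\Gamma\subseteq\pi'$, one has $\Gamma\cap P=\Gamma\cap P'$, and generators of $\Gamma\cap P$ are obtained from those of $\Gamma\cap P'$ by adjoining any coset representatives of $P'$ in $P$ that happen to lie in~$\Gamma$---a finite check using the Membership Problem for~$\Gamma$, which is decidable because $\Gamma$ is separable (Theorem~\ref{thm:hypsubgroup}) by the argument of Lemma~\ref{lem:resfinite}. It therefore suffices to compute $\Gamma\cap P'$. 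In the geometrically infinite case this is immediate: $\Gamma\cap P'=\ker(q|_{P'})$, the kernel of a homomorphism from a free abelian group of rank at most two into $\Z$, computable by elementary linear algebra from the values of $q$ on generators of $P'$.

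The main obstacle is the geometrically finite case, where we must identify the set $\Gamma\cap P'=\{p\in P':r(p)=p\}$ from the retraction~$r$. Each individual equation $r(p)=p$ is decidable by the Word Problem. Since $\Gamma$, being a virtual retract of the hyperbolic group~$\pi$, is quasi-convex, the intersection $\Gamma\cap P'$ is undistorted in $P'\cong\Z^2$ and therefore a finitely generated subgroup of rank at most two. The plan is to run in parallel: (i)~an enumeration of elements $p\in P'$ satisfying $r(p)=p$, which produces an ascending chain $H_1\leq H_2\leq\cdots$ of subgroups of $\Gamma\cap P'$; and (ii)~a search over finite quotients $\phi\colon\pi\to G$ for one certifying $\phi(\Gamma)\cap\phi(P')=\phi(H_k)$ at some stage. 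The existence of such a $\phi$ is eventually secured by the separability of the product $\Gamma\cdot P'$ of finitely generated subgroups in~$\pi$---precisely the fact cited in the paper's verification of condition~(B1). Once such a $\phi$ is found, each of the finitely many cosets of $\ker\phi\cap P'$ in $P'$ not already contained in $H_k$ can be tested for membership in~$\Gamma$, yielding $\Gamma\cap P'$ exactly; termination of search~(ii) is the delicate point, resting ultimately on the Virtually Compact Special Theorem (Theorem~\ref{thm:akmw}).
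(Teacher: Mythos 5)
Your setup (the dichotomy from Theorem~\ref{thm:hypsubgroup}, the parallel Reidemeister--Schreier search, the passage to $P'=P\cap\pi'$, and the linear-algebra computation of $\ker(q|_{P'})$ in the geometrically infinite case) matches the paper. The gap is in the geometrically finite case. Your completeness certificate is a finite quotient $\phi$ with $\phi(\Gamma)\cap\phi(P')=\phi(H_k)$, and you assert its existence follows from separability of the product $\Gamma\cdot P'$. These are different properties: separability of $\Gamma P'$ says that the \emph{subset} $\Gamma P'$ is closed in the profinite topology, whereas what you need is the profinite intersection property $\overline{\Gamma}\cap\overline{P'}=\overline{\Gamma\cap P'}$ (a Wilson--Zalesskii-type statement), which is not among the results quoted in the paper and does not follow from (B1)-type double coset separability. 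So termination of your search (ii) is unjustified. Moreover, even granting such a $\phi$, the extraction step fails: a coset of $\ker\phi\cap P'$ in $P'$ is an infinite set, and testing a single representative for membership in $\Gamma$ does not decide whether that coset meets $\Gamma$ outside $H_k$; when $\Gamma\cap P'$ has infinite index in $P'\cong\Z^2$ (e.g.\ trivial or of rank one) you cannot exhaust the cosets, so the procedure neither terminates with a certified answer nor correctly recovers $\Gamma\cap P'$. (A smaller slip: $\pi$ is only relatively hyperbolic here, so ``quasi-convex in the hyperbolic group $\pi$'' is not the right justification that $\Gamma\cap P'$ is finitely generated --- though that is automatic for subgroups of $\Z^2$.)

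The paper avoids any completeness certificate by exploiting the algebraic structure of cusp subgroups. With $\rho\colon\pi_0\to\Gamma$ the retraction and $P_0=\pi_0\cap P$, one has $\Gamma\cap P=\rho(P_0)\cap P_0$, and a finite word-problem check decides whether $[\rho(P_0),P_0]=1$. If so, then since $P_0$ is maximal abelian in $\pi_0$ one gets $\rho(P_0)\subseteq P_0$, hence $\Gamma\cap P=\rho(P_0)$, generated by the $\rho$-images of generators of $P_0$. If not, then since centralizers of nontrivial elements of $\pi_0$ are abelian (commutative transitivity), any nontrivial element of $\rho(P_0)\cap P_0$ would force $[\rho(P_0),P_0]=1$, so $\Gamma\cap P=1$. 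You are missing this structural dichotomy, which is what makes the geometrically finite case algorithmic.
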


\begin{proof}
By Theorem~\ref{thm:hypsubgroup}, one of the following happens:
\begin{enumerate}
\item either there exists a
 finite-index subgroup $\pi_0$ of $\pi$ and a retraction $\rho\colon \pi_0\to\Gamma$; or
\item  there exists a finite-index subgroup $\pi_0$ and a morphism $p\colon \pi_0\to \Z$ such that $\Gamma=\ker p$.
\end{enumerate}
We now run two algorithms simultaneously:
 a naive search using the Reide\-meister--Schreier algorithm
 to find   $\pi_0$ and $\rho$ as in Case~(1), and
 a naive search using the Reidemeister--Schreier algorithm and the algorithm
 used in the proof of Lemma~\ref{lem:findiso, rec enum} looking for $\pi_0$ and $p$ as in Case~(2). 
 Since Case~(1) or (2) hold, one of these algorithms will terminate.
  By Lemma~\ref{lem:finiteindexkernelcokernel},  in either case we can compute generators for $P_0=\pi_0\cap P$.

In Case~(2), $\Gamma\cap P=(\ker p)\cap P_0$, which can be computed by standard linear algebra.

Suppose that we are in Case~(1). We note that $\Gamma\cap P=\rho(P_0)\cap P_0$. 
Using the solution to the Word Problem in $\pi$ we can determine whether all generators of $\rho(P_0)$ and $P_0$ commute, i.e.,  whether  $[\rho(P_0),P_0]=1$. 

First suppose that  $[\rho(P_0),P_0]=1$. It follows from the well-known fact 
that $P_0$ is maximal abelian in $\pi_0$
(see, e.g., \cite[Theorem~3.1]{AFW12}) that then $\rho(P_0)\subseteq P_0$, which implies that  $\Gamma\cap P=\rho(P_0)$. 

Now suppose that  $[\rho(P_0),P_0]\ne 1$. The fact that $N$ is hyperbolic implies
by \cite[Corollary 3.11]{AFW12} that   the centralizer of any non-identity element in $\pi_0$ is abelian. It now follows that $\rho(P_0)\cap P_0=1$ and so $\Gamma\cap P=1$.
\end{proof}

\noindent
A similar argument deals with the case of a product manifold:

\begin{lemma}\label{lem: Intersections for SF manifolds}
Let $N$ be a compact Seifert-fibered manifold with boundary and let~$P$ be a cusp subgroup of $\pi=\pi_1N$.  There is an algorithm, uniform in $\pi$ and $P$, that takes as input a finite set of elements that generate a subgroup~$\Gamma$ of $\pi$, and computes a generating set for $\Gamma\cap P$. 
\end{lemma}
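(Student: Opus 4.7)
The plan is to mirror the strategy of Lemma~\ref{lem: Intersections for hyperbolic manifolds}, exploiting the simpler subgroup structure of Seifert fibered pieces.  The key structural input is that every compact Seifert fibered $3$-manifold with non-empty boundary is finitely covered by a trivial circle bundle $\Sigma\times S^1$ over a compact surface $\Sigma$ with non-empty boundary; consequently $\pi=\pi_1N$ contains a finite-index subgroup $\pi_0\cong F\times\langle t\rangle$, where $F$ is a finitely generated free group and $t$ is central and corresponds to a power of the regular fiber.  Under such an identification, $P_0:=P\cap\pi_0$ is a rank-$2$ abelian subgroup of $F\times\langle t\rangle$ containing a power of $t$, and after a suitable change of basis it is generated by elements of the form $(\gamma,t^c)$ and $(1,t^d)$ for some primitive $\gamma\in F$ and integers $c,d$ with $d\ge 1$.

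As a first step, I would systematically enumerate finite-index subgroups of $\pi$ via the Reidemeister--Schreier process, and for each candidate test---using Lemma~\ref{lem:findiso, rec enum}---whether it admits a product decomposition $F\times\Z$ of the required form.  The search terminates by the structural fact recalled above.  Lemma~\ref{lem:finiteindexkernelcokernel} then yields a finite generating set for $\Gamma_0:=\Gamma\cap\pi_0$, coset representatives $h_1,\ldots,h_m$ for $\Gamma_0$ in $\Gamma$, and the explicit form of $P_0$ inside the product decomposition.

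As a second step, I would compute $\Gamma_0\cap P_0$ inside $F\times\langle t\rangle$ as follows: project $\Gamma_0$ to the $F$-factor to obtain a finitely generated subgroup $H\le F$; compute $H\cap\langle\gamma\rangle$ using Stallings foldings, tracking preimages in $\Gamma_0$; compute the kernel of the projection, which is a subgroup of $\langle t\rangle\cong\Z$; then assemble the answer by elementary linear algebra.  To recover $\Gamma\cap P$ from $\Gamma_0\cap P_0=(\Gamma\cap P)\cap\pi_0$, note that the latter has index at most $[\pi:\pi_0]$ in the former and both are subgroups of $P\cong\Z^2$; one therefore enumerates the finitely many candidate cosets in $P$ and, for each $p\in P$, tests whether $p\in\Gamma$ by checking, for every $h_i$, whether $h_i^{-1}p$ lies in $\pi_0$ (decidable since $\pi_0$ has finite index) and, if so, in $\Gamma_0\le F\times\Z$ (decidable by Stallings foldings plus linear algebra in the $\Z$ coordinate).

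The main obstacle is not any single algorithmic step---each is standard once the virtual product structure is in hand---but rather verifying that the initial enumeration terminates in a way that is uniform over all compact Seifert fibered manifolds with boundary.  This reduces to the classical observation that the base $2$-orbifold has non-empty boundary, hence is virtually a surface with boundary, and that the resulting circle bundle over this surface can be further unwrapped to a trivial bundle via a finite cover.
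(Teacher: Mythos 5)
Your argument is correct, but at the decisive step it takes a genuinely different route from the paper's. Both proofs start from the same structural reduction: $N$ is finitely covered by $\Sigma\times S^1$, so $\pi$ has a finite-index subgroup isomorphic to $F\times\Z$ with $F$ free (and the low-complexity cases are trivial). From there the paper does \emph{not} compute inside $F\times\Z$ directly. Instead it invokes the fact, implicit in Scott's work, that every finitely generated subgroup of $\pi_1\Sigma\times\Z$ is a virtual retract; a naive search then produces a finite-index subgroup $\pi_0$ and a retraction $\rho\colon\pi_0\to\Gamma$, and the identity $\Gamma\cap P=\rho(P_0)\cap P_0$ reduces everything to testing whether $[\rho(P_0),P_0]=1$, using that $P_0$ is maximal abelian and that $\pi_1\Sigma$ is commutative transitive (in the non-commuting case the intersection drops into the centre and is read off from the abelianization). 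You instead compute $\Gamma_0\cap P_0$ head-on in $F\times\langle t\rangle$ via Stallings foldings on the free factor and linear algebra on the central factor, and then climb back up to $\Gamma\cap P$ by a finite search. Your route needs no retraction or separability input at all---only decidability of membership and intersections in $F\times\Z$---which is more elementary and makes uniformity transparent; the paper's route involves less bookkeeping and runs exactly parallel to its hyperbolic analogue, at the cost of quoting the virtual-retract property.

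One step of yours needs more care than you give it: the recovery of $\Gamma\cap P$ from $A:=\Gamma_0\cap P_0$ cannot literally proceed by ``enumerating the finitely many candidate cosets in $P$,'' because $A$ may have infinite index in $P$ (indeed $A$ may be trivial or of rank one), so the cosets of $A$ in $P$ are infinite in number and each is an infinite set with no distinguished representative to feed into your membership test. The repair is to enumerate candidate \emph{subgroups} rather than cosets: since $(\Gamma\cap P)\cap P_0=A$, the quotient $(\Gamma\cap P)/A$ embeds in the finite group $P/P_0$, and inside $P\cong\Z^2$ there are only finitely many subgroups $B$ with $B\cap P_0=A$, each with explicitly computable generators (treat the rank $0$, $1$, $2$ cases of $A$ separately). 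Then $\Gamma\cap P$ is the unique maximal such $B$ all of whose generators pass your---correctly described---membership test for $\Gamma$. With that adjustment the proof goes through.
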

\begin{proof}
Since $N$ is finitely covered by a product $\Sigma\times S^1$ (where $\Sigma$ is a compact surface with boundary), one quickly reduces to the case $N=\Sigma\times S^1$ (see \cite[Lemma~26]{FW14} for full details).

The case that $\Sigma$ is a disk or an annulus is trivial. We therefore henceforth assume that $\chi(\Sigma)<0$.  Note that, since every finitely generated subgroup of a surface group $\pi_1\Sigma$ is a virtual retract (this is implicit in \cite{Sc78}), it follows easily that every finitely generated subgroup of $\pi_1\Sigma\times \mathbb{Z}$ is a virtual retract.  Therefore, a naive search using the Reidemeister--Schreier algorithm will find a finite-index subgroup $\pi_0$ of $\pi$ and a retraction $\rho\colon\pi_0\to\Gamma$.  
As in the proof of Lemma~\ref{lem: Intersections for hyperbolic manifolds}, we can  compute generators for $P_0=\pi_0\cap P$.

Again, we note that $\Gamma\cap P=\rho(P_0)\cap P_0$.  As before, an explicit computation again determines whether   $[\rho(P_0),P_0]=1$. 
  If so then, just as before, because $P_0$ is maximal abelian we have $\rho(P_0)\subseteq P_0$ and so $\rho(P_0)=\Gamma\cap P$.  If not, then by the commutative transitivity of $\pi_1\Sigma$, we deduce that $\Gamma\cap P=\rho(P_0)\cap P_0$ is contained in the centre $Z_0$ of $\pi_0$ and so it suffices to compute $\rho(P_0)\cap Z_0$. 
  (Here recall that a group is {\it commutative transitive}\/ if $[a,b]=1$ and $[b,c]=1$ imply that $[a,c]=1$.)
   But now $\rho(P_0)\cap Z_0$ can be seen in the abelianization of $\pi_0$, and so can be computed by elementary linear algebra.
\end{proof}

\noindent
We are now ready to prove that the Membership Problem is solvable.

\begin{proof}[Proof of Theorem~\ref{thm: MP}]
We start out by showing that it suffices to show that the Membership Problem is uniformly solvable for the class of  fundamental groups of \emph{orientable} $3$-manifolds.
To see this, 
let $\pi$ be the fundamental group of a $3$-manifold $N$.   Then  $\pi':=\ker(\pi\to H_1(\pi;\F_2))$ is the fundamental group of an orientable $3$-manifold. By Lem\-ma~\ref{lem:finiteindexkernelcokernel}, we can determine a presentation for $\pi'$. Moreover, by Lemma~\ref{lem: MP passing to fi subgroups}, a solution to the Membership Problem for~$\pi'$ also gives a solution to the Membership Problem for $\pi$.  This concludes the proof of the claim.

\medskip
\noindent
By the claim and Lemma~\ref{lem: Groups to retracts}, we may now assume that $\pi$ is the fundamental group of a closed orientable $3$-manifold $N$ and that we furthermore have a triangulation for $N$ available.
By  Theorem~\ref{thm:findprimedecomposition} we can determine the Prime Decomposition of $N$. By  
Corollary~\ref{cor:solvablempproblemandfreeproducts} it  thus suffices to consider the prime components of $N$. 
We can henceforth assume that $N$ is  irreducible.

 Using Theorem \ref{thm:isitsfs} we  determine whether $N$ is Seifert fibered. As   mentioned before,  in this case, $\pi$ is subgroup separable by \cite{Sc78}; this immediately gives a uniform solution to the Membership Problem.  If~$N$ is not Seifert fibered, then we use  Theorem~\ref{thm:findjsjdecomposition} to  determine the JSJ decomposition of $N$.  The corresponding graph of groups decomposition of~$\pi$ is benign by Lemmas~\ref{lem: Intersections for hyperbolic manifolds} and~\ref{lem: Intersections for SF manifolds}. So the result follows from Theorem~\ref{thm: KMW}.
\end{proof}

\begin{remark}
As a further consequence of \cite[Theorem~5.8,~(b)]{kapovich_foldings_2005}, we also get that the \emph{Uniform Finite Presentation Problem} for closed, orientable $3$-manifold groups is solvable. That is, there is an algorithm that takes as input a finite set $S$ of elements of a fundamental group $\pi$ of a closed, orientable $3$-manifold and outputs a finite presentation for the subgroup~$\langle S\rangle$ of $\pi$, and this algorithm is uniform in $\pi$.
\end{remark}

\subsection{The Isomorphism Problem}

Much of the material in this section derives from \cite[Section 10]{sela_isomorphism_1995}. We  refer to \cite{bessieres_geometrisation_2010} for further details.

\begin{theorem}\label{thm: IP}
The Isomorphism Problem for the class of fundamental groups of closed, orientable $3$-manifolds is solvable.
\end{theorem}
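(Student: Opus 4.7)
The plan is to reduce the Isomorphism Problem to the Homeomorphism Problem for closed, orientable, irreducible $3$-manifolds, whose solvability is discussed in Section~\ref{section:homeomorphismproblem}. The key point is that the Prime Decomposition Theorem provides a canonical form for the fundamental group of a closed orientable $3$-manifold as a free product, and the main subtlety---distinct lens spaces can have isomorphic fundamental groups---can be dealt with separately because their $\pi_1$'s are especially simple.

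Given finite presentations of groups $\pi$ and $\pi'$ in the class, I would first invoke Lemma~\ref{lem: Groups to closed $3$-manifolds} (restricted via Corollary~\ref{cor: $3$-manifolds are re} to the recursively enumerable subclass of closed \emph{orientable} $3$-manifolds) to produce triangulations of closed orientable $3$-manifolds $N$ and $N'$ with $\pi_1N\cong\pi$ and $\pi_1N'\cong\pi'$. Applying Theorem~\ref{thm:findprimedecomposition} then yields prime decompositions
\[
N\cong N_1\#\cdots\#N_r, \qquad N'\cong N_1'\#\cdots\#N_s'.
\]
Each $S^2\times S^1$ summand contributes a free $\Z$-factor to $\pi_1$, each $S^3$ summand contributes nothing, and each remaining (irreducible, non-$S^3$) prime summand contributes a freely indecomposable, non-infinite-cyclic factor: indeed, a non-trivial free product splitting of the fundamental group of such a summand would, by the Kneser Conjecture (Theorem~\ref{thm:kneserconj}) together with the resolution of the Poincar\'e Conjecture, yield a non-trivial connected sum decomposition, contradicting irreducibility. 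Hence the induced free product decomposition of $\pi_1N$, obtained by gathering the $S^2\times S^1$ contributions into a single free group $F_n$, is the Grushko decomposition of $\pi_1N$, and is therefore unique up to reordering and isomorphism of the indecomposable factors.

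It then suffices to decide whether the numbers of $S^2\times S^1$ factors on the two sides agree and whether the multisets of fundamental groups of the remaining irreducible prime factors match up to isomorphism. For each pair of irreducible prime factors, I would first detect algorithmically which are lens spaces, and distinguish two cases: if neither is a lens space, then Theorem~\ref{thm:primepi}(1) reduces isomorphism of $\pi_1$'s to homeomorphism of manifolds, which is decidable by appeal to the Homeomorphism Problem for closed orientable irreducible $3$-manifolds; if both are lens spaces, then their fundamental groups are finite cyclic and are isomorphic iff they share the same order $p$, an easily computable invariant. The main delicate point is precisely this lens space ambiguity---distinct $L(p,q)$ and $L(p,q')$ with $qq'\not\equiv\pm 1\pmod{p}$ are non-homeomorphic yet have $\pi_1\cong\Z/p\Z$, so a naive application of the Homeomorphism Problem would give the wrong answer. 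Once lens spaces are identified and set aside, however, this ambiguity dissolves and the algorithm terminates correctly.
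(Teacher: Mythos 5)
Your reduction to the irreducible case (prime decomposition, Grushko uniqueness, Kneser's Conjecture plus the Poincar\'e Conjecture to see that the non-$S^2\times S^1$ prime summands give the freely indecomposable factors) is exactly the first step of the paper's argument and is fine. The genuine gap is in the second step: you decide isomorphism of the fundamental groups of two non-lens-space irreducible prime summands by ``appeal to the Homeomorphism Problem for closed orientable irreducible $3$-manifolds, whose solvability is discussed in Section~\ref{section:homeomorphismproblem}.'' That is circular relative to this paper: the proof of Theorem~\ref{thm:homeomorphism} handles the closed case precisely by first invoking Theorem~\ref{thm: IP} to test whether $\pi_1N\cong\pi_1N'$ and then applying Theorem~\ref{thm:primepi}. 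So the implication you need runs in the opposite direction from the one established in Section~\ref{section:homeomorphismproblem}, and you cannot use it as a black box here.

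Your reduction does go through where an \emph{independent} homeomorphism algorithm exists, namely for Haken manifolds (Haken--Hemion--Matveev), and this is exactly what the paper does in that case, after first applying the Jaco--Oertel algorithm to decide Hakenness. But the non-Haken case is non-empty and is not covered by your argument: by Geometrization it comprises spherical manifolds (not only lens spaces --- e.g.\ the Poincar\'e sphere and prism manifolds, which your lens-space dichotomy does not catch), small Seifert fibered manifolds with infinite fundamental group, and closed non-Haken hyperbolic manifolds. For these the paper works group-theoretically rather than topologically: Todd--Coxeter for the finite (spherical) groups, Sela's solution for the infinite small Seifert fibered groups (reducing to triangle groups), and Sela's solution to the Isomorphism Problem for torsion-free hyperbolic groups in the hyperbolic case. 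You would need to supply these ingredients (or an independent, non-circular solution to the oriented-or-not Homeomorphism Problem for non-Haken manifolds, which is genuinely harder than what you cite) to close the argument. A smaller point: you should also say how you algorithmically recognize the spherical summands --- in the paper this is part of the algorithm that sorts a non-Haken manifold into the spherical, small Seifert fibered, or hyperbolic bin.
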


\noindent
In the remainder of this section we will sketch the proof of Theorem \ref{thm: IP}.
First, it follows from  Lemma \ref{lem: Groups to closed $3$-manifolds} together with Theorems  \ref{thm:findprimedecomposition} and \ref{thm:kneserconj}  that  it suffices to solve the Isomorphism Problem for fundamental groups of closed, orientable and \emph{irreducible} $3$-manifolds. 

 At this stage, the argument divides into the cases in which $N$ is Haken and $N$ is non-Haken.  First, we need to be able to determine which case we are in.  That we can follows from \cite[Theorem 4.3]{jaco_algorithm_1984}.

\begin{theorem}[Jaco--Oertel]  
There is an algorithm to determine whether  a given closed, irreducible $3$-manifold $N$ is Haken.
\end{theorem}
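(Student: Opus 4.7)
The plan is to apply normal surface theory, following Kneser, Haken, and Jaco--Oertel. Fix a triangulation $\mathcal T$ of $N$ with $t$ tetrahedra. Recall that a \emph{normal surface} meets each tetrahedron in a disjoint union of triangles and quadrilaterals of the seven standard types, and is encoded by a nonnegative integer vector in $\mathbb Z^{7t}$ satisfying the linear matching equations along faces and the quadratic incompatibility constraint that at most one quadrilateral type occurs per tetrahedron. The admissible vectors form a finite union of rational polyhedral cones, so there is a computable finite set $\mathcal V$ of \emph{fundamental normal surfaces}, namely those whose coordinate vectors are not proper sums of two admissible vectors; every admissible surface is a Haken sum of elements of $\mathcal V$.

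The central geometric fact I would invoke is the Jaco--Oertel lemma: if $N$ is closed, orientable, irreducible, and Haken, then \emph{some} $S\in\mathcal V$ is two-sided (hence orientable in $N$), incompressible, and not a $2$-sphere. The proof is by taking any embedded incompressible surface $S_0$, isotoping it into normal form (possible in irreducible manifolds by Kneser--Haken), and then decomposing $S_0$ as a Haken sum $S_0=\sum n_iF_i$ with $F_i\in\mathcal V$. A careful case analysis of the intersection patterns along the exchange curves of the sum shows that incompressibility (combined with irreducibility of $N$ and the fact that every embedded $S^2$ bounds a ball) forces at least one $F_i$ to itself be incompressible and not a sphere; otherwise one could simplify $S_0$ against $S_0$, contradicting normality. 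This is the step I expect to be the main obstacle, and I would follow Jaco--Oertel's original treatment.

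With this lemma in hand the algorithm is straightforward. First, compute $\mathcal V$ from $\mathcal T$ by enumerating vertices of the projectivized normal surface cone (standard integer linear programming). Second, for each $S\in\mathcal V$ compute its Euler characteristic and number of components from the coordinate vector; discard $S$ if every component is a $2$-sphere (by irreducibility these bound balls and are useless), and if $S$ is one-sided replace it with the boundary of a regular neighbourhood, which is again a normal surface of computable coordinates. Third, for each surviving candidate $S$, run Haken's incompressibility test: cut $N$ along $S$, triangulate the result, and search for a normal compressing disk; by the Haken finiteness principle it suffices to look among fundamental normal disks in the cut-open manifold, and the absence of such a disk (plus absence of an analogous boundary-parallel annulus in the two-sided case) certifies incompressibility.

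Finally, output \textbf{Haken} if any candidate passes the incompressibility test, and \textbf{non-Haken} otherwise. Correctness in the positive direction is immediate: producing an incompressible two-sided surface of genus $\geq 1$ exhibits $N$ as Haken. Correctness in the negative direction is exactly the Jaco--Oertel lemma above: if $N$ were Haken some fundamental surface would have survived and passed the test. Termination is clear since $\mathcal V$ is finite and each subroutine (enumeration of fundamental solutions, Euler characteristic computation, Haken's disk search) terminates in finitely many steps.
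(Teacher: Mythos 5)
The paper offers no proof of this statement---it is quoted directly from Jaco--Oertel \cite[Theorem~4.3]{jaco_algorithm_1984}---and your sketch is a faithful outline of that original argument: enumerate the fundamental normal surfaces of a triangulation, invoke the fact that a closed irreducible Haken manifold must contain a two-sided incompressible non-sphere among them, and test each candidate with Haken's incompressibility algorithm. The only imprecision is in your heuristic for the key lemma: the contradiction is not with normality of $S_0$ but with a \emph{least-weight} choice of $S_0$ in its isotopy class (Jaco--Oertel show that the summands in any Haken sum decomposition of a least-weight incompressible normal surface are themselves incompressible), but since you explicitly defer to their treatment for exactly this step, this is not a gap.
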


\noindent
The Haken case is dealt with by the following theorem of Haken \cite{haken_homoomorphieproblem_1962}, Waldhausen \cite{waldhausen_recent_1978}, Hemion \cite{Hen79} and Matveev \cite{Mat03}.

\begin{theorem}[Haken]  
There is an algorithm that determines whether  an input pair of orientable Haken $3$-manifolds are homeomorphic.
\end{theorem}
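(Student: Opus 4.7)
The plan is to proceed via the theory of normal surfaces and hierarchies, in the classical style of Haken, Waldhausen, Hemion and Matveev. The input is triangulations of two Haken manifolds $M_1$ and $M_2$, and the goal is to decide whether they are homeomorphic.

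First, I would algorithmically construct a \emph{hierarchy} for each $M_i$: a finite sequence of cuts along incompressible, boundary-incompressible surfaces $\Sigma_i^{(1)}, \Sigma_i^{(2)}, \dots$ that eventually reduces $M_i$ to a disjoint union of $3$-balls. The existence of such a hierarchy is classical for Haken manifolds. The algorithmic construction rests on Haken's normal surface theory: essential surfaces in a triangulated $3$-manifold correspond, up to normal isotopy, to nonnegative integer solutions of a finite system of linear matching equations in the triangulation, and one can exhibit an essential normal surface whenever one exists by enumerating finitely many fundamental (or vertex) solutions and testing each for incompressibility. Between cuts one re-triangulates the result and iterates; a Kneser-type bound controls the depth of the hierarchy in terms of the initial triangulation, so the procedure terminates.

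Next, observe that the terminal collection of $3$-balls, together with the cut surfaces $\Sigma_i^{(j)}$ and the gluing homeomorphisms reidentifying their two sides, reconstructs $M_i$ up to homeomorphism. Deciding $M_1 \cong M_2$ thus reduces to deciding whether there is a correspondence of terminal $3$-balls that carries the marked patterns induced by successive cuts and is compatible with all gluings. Processing the hierarchies from the deepest level upward, at each stage one reduces to the following question: given two self-homeo\-morphisms $\varphi, \psi$ of a compact surface $\Sigma$, do they coincide modulo isotopy and modulo pre- and post-composition by homeomorphisms that extend across the already-matched pieces on either side?

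The main obstacle is precisely this last question, which amounts to a double-coset problem in the mapping class group $\Mod(\Sigma)$. Hemion proved that the conjugacy problem, and in fact the form of the double-coset problem needed here, is algorithmically solvable in $\Mod(\Sigma)$ for any compact surface; the argument goes through the Nielsen--Thurston classification together with a careful analysis of the canonical reducing curve system of a mapping class. Combining the hierarchy construction with Hemion's solution yields the desired algorithm, and Matveev's monograph \cite{Mat03} provides a comprehensive and fully algorithmic modern treatment.
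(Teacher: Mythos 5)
Your outline is correct and follows exactly the route the paper intends: the paper gives no proof of its own but simply cites Haken, Waldhausen, Hemion and Matveev, i.e.\ precisely the hierarchy-plus-normal-surface construction and Hemion's solution of the surface mapping class comparison problem that you describe, with Matveev's monograph supplying the complete modern treatment. Nothing further is needed.
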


\noindent
By Theorem \ref{thm:primepi}, this gives rise to a group-theoretic analogue.

\begin{corollary}
The Isomorphism Problem for the class of fundamental groups of closed, orientable, Haken $3$-manifolds is solvable.
\end{corollary}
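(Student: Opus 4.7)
The plan is to reduce the Isomorphism Problem for this class of groups to Haken's Homeomorphism Problem via the Mostow-type rigidity statement encoded in Theorem \ref{thm:primepi}. Suppose we are given two finite presentations $P_1$, $P_2$ of groups $\pi^{(1)}$, $\pi^{(2)}$, each of which is (promised to be) the fundamental group of some closed, orientable, Haken $3$-manifold. We want to decide whether $\pi^{(1)} \cong \pi^{(2)}$.

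First I would produce a recursively enumerable set $\mathcal{H}$ of triangulations of all closed, orientable, Haken $3$-manifolds. By Corollary \ref{cor: $3$-manifolds are re} the triangulations of closed, orientable $3$-manifolds are recursively enumerable; for each such triangulation we apply Theorem \ref{thm:findprimedecomposition} and retain only those whose prime decomposition has a single factor (so the manifold is irreducible), and then apply the Jaco--Oertel algorithm to retain only those that are Haken. From each triangulation in $\mathcal{H}$ we read off a finite presentation of the fundamental group (from the $2$-skeleton); call the resulting recursively enumerable set of presentations $\mathcal{R}$.

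Next, applying Lemma \ref{lem:findiso, rec enum} with $\mathcal{R}$ on one side and $\{P_i\}$ on the other (and remembering the triangulation from which the element of $\mathcal{R}$ came), we obtain, for $i=1,2$, a triangulated closed orientable Haken $3$-manifold $N_i$ together with an explicit isomorphism $\pi_1 N_i \xrightarrow{\sim} \pi^{(i)}$. This search terminates because $\pi^{(i)}$ is by hypothesis the fundamental group of some Haken $3$-manifold. We then run Haken's algorithm to decide whether $N_1$ and $N_2$ are homeomorphic, and output the answer.

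Correctness follows from Theorem \ref{thm:primepi}(1). Every Haken manifold is irreducible, hence prime; and no Haken manifold is a lens space, because a Haken manifold contains an incompressible orientable surface distinct from $D^2$ and $S^2$ (hence of infinite $\pi_1$) whose group injects into $\pi_1$, while lens spaces have finite fundamental group. Thus the hypotheses of Theorem \ref{thm:primepi}(1) are met by $N_1$ and $N_2$, so $N_1 \cong N_2$ if and only if $\pi_1 N_1 \cong \pi_1 N_2$, i.e., if and only if $\pi^{(1)} \cong \pi^{(2)}$. The only non-routine ingredient is Haken's Homeomorphism Theorem itself, which is already invoked; everything else is bookkeeping that combines Corollary \ref{cor: $3$-manifolds are re}, Theorems \ref{thm:findprimedecomposition} and the Jaco--Oertel algorithm, Lemma \ref{lem:findiso, rec enum}, and the rigidity in Theorem \ref{thm:primepi}.
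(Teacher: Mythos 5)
Your argument is correct and is essentially the paper's own: the paper deduces this corollary from Haken's Homeomorphism Theorem together with Theorem~\ref{thm:primepi}(1) in exactly the way you describe, with the passage from presentations back to triangulated Haken manifolds being the same bookkeeping via Corollary~\ref{cor: $3$-manifolds are re} and Lemma~\ref{lem:findiso, rec enum}. (One cosmetic point: a single prime factor only guarantees primeness, so before invoking Jaco--Oertel you should also discard $S^1\times S^2$ --- e.g.\ by testing whether $\pi_1\cong\Z$ --- but this does not affect the argument.)
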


\noindent
It remains to deal with the non-Haken case which, by Geometrization, divides into three cases: spherical, small Seifert fibered with infinite fundamental group and hyperbolic. 
The next theorem asserts that these can be recognized algorithmically.

\begin{theorem}
There is an algorithm to determine whether a given closed, orientable, non-Haken $3$-manifold is spherical, small Seifert fibered  with infinite fundamental group, or hyperbolic.
\end{theorem}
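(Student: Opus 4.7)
The plan is to detect Seifert fibrations first, handle the hyperbolic case by elimination via Geometrization, and then separate the two remaining Seifert possibilities by reading off the Seifert invariants.

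First, we feed the given triangulation of $N$ into the Jaco--Tollefson algorithm of Theorem~\ref{thm:isitsfs} to decide whether $N$ is Seifert fibered. Suppose the answer is negative. Since $N$ is closed, orientable, irreducible and non-Haken, it contains no two-sided incompressible surface other than (possibly) spheres or disks; in particular, no incompressible torus. By the Geometrization Theorem~\ref{thm:geom}, the JSJ collection of $N$ is empty, so $N$ itself is geometric; since it is not Seifert fibered, it must be hyperbolic, and the algorithm reports this.

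Suppose instead that the answer is positive. The Jaco--Tollefson algorithm \cite[Algorithm~8.1]{jaco_algorithms_1995} does more than just return a yes/no answer: it constructs an explicit Seifert fibration of $N$, from which one effectively reads off the normalized Seifert invariants, namely the base orbifold $B$ (with its singular data) and the rational Euler number $e$ of the fibration. The geometry of a closed orientable Seifert fibered $3$-manifold is determined by the pair $\bigl(\chi^{\mathrm{orb}}(B),\, e\bigr)$ according to the standard table (see, e.g., \cite{Sc83a}); in particular, $N$ is spherical iff $\chi^{\mathrm{orb}}(B) > 0$ and $e \neq 0$. This condition is effectively checkable once the invariants are in hand, so we can decide whether $N$ is spherical. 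If it is not, then $\pi_1 N$ is infinite; but $N$ is still Seifert fibered and non-Haken, which forces the base orbifold to be $S^2$ with at most three cone points. Therefore $N$ is a small Seifert fibered space with infinite fundamental group, and the algorithm reports this.

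The main obstacle is the extraction of the Seifert invariants in the positive case: one must verify that the normal-surface-theoretic machinery of \cite{jaco_algorithms_1995} outputs enough information to recover $B$ and $e$, not merely the existence of a fibration. An alternative, which sidesteps this bookkeeping but gives less geometric information, would be to decide finiteness of $\pi_1 N$ by combining residual finiteness (Theorem~\ref{thm:Hempel}) with the Word Problem (Theorem~\ref{thm: Word Problem}) once one knows a priori, from $N$ being a closed Seifert fibered space, that $\pi_1 N$ is either finite or contains a detectable $\mathbb{Z}$ subgroup coming from the regular fibre or the base orbifold group. Either route produces the required trichotomy.
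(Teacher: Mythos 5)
Your argument is correct, but it takes a genuinely different route from the paper's. The paper disposes of this theorem by citing Sela \cite[Theorem 10.5]{sela_isomorphism_1995}, whose main ingredient is Papasoglu's algorithm \cite{papasoglu_algorithm_1996} (alternatively Manning's \cite{manning_algorithmic_2002}) for certifying word-hyperbolicity: the trichotomy is detected at the level of the fundamental group. You instead argue topologically, via the Jaco--Tollefson Seifert-fibration detector (Theorem~\ref{thm:isitsfs}) together with the observation that a closed, orientable, irreducible, non-Haken manifold contains no incompressible torus, so its JSJ collection is empty and Geometrization (Theorem~\ref{thm:geom}) forces it to be hyperbolic once a Seifert fibration is excluded. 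Both routes are legitimate: yours stays entirely within the normal-surface toolkit the paper has already quoted, whereas the group-theoretic route has the advantage of outputting a hyperbolicity certificate for $\pi_1N$ that feeds directly into Sela's solution of the Isomorphism Problem in the hyperbolic case, which is the use the paper makes of this theorem. Two remarks on your write-up. First, you are right to flag the extraction of the Seifert invariants from \cite[Algorithm~8.1]{jaco_algorithms_1995} as the delicate point of your first variant; your fallback of deciding finiteness of $\pi_1N$ is the cleaner way to isolate the spherical case, and it can be made precise without invoking $\Z$-subgroups: run Todd--Coxeter coset enumeration (which terminates if and only if $\pi_1N$ is finite) in parallel with an enumeration of epimorphisms onto finite groups --- an infinite residually finite group (Theorem~\ref{thm:Hempel}) has finite quotients of unbounded order, which certifies infiniteness. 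By Elliptization, finite $\pi_1N$ already implies spherical, and the remaining infinite Seifert fibered case is small by the trichotomy itself, so no Seifert invariants are needed. Second, you implicitly assume $N$ is irreducible (as required for Theorem~\ref{thm:isitsfs}); this is harmless here since ``non-Haken'' in this paper presupposes irreducibility, but it is worth stating.
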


\noindent
See \cite[Theorem 10.5]{sela_isomorphism_1995} for the proof.  The main ingredient is Papasoglu's algorithm for finding word-hyperbolic structures on a group \cite{papasoglu_algorithm_1996}.  Alternatively, one could use Manning's algorithm \cite{manning_algorithmic_2002}.

\medskip
\noindent
It now remains to solve the Isomorphism Problem in these three cases.  The Isomorphism Problem is easily solvable for finite groups: given finite presentations $\langle A\,|\, R\rangle$ and $\langle A'\,|\,R'\rangle$ for
groups $\pi$, $\pi'$, respectively, first use the Todd--Coxeter algorithm (see, e.g., \cite{sims}) to construct multiplication tables for $\pi$, $\pi'$, using which it is straightforward to determine whether $\pi\cong\pi'$.
The remaining two cases are dealt with by the following theorems of Sela.

\begin{theorem}[Sela]
The Isomorphism Problem is solvable for the infinite groups which are fundamental groups of small Seifert fibered manifolds.
\end{theorem}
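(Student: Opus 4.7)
The plan is to reduce to a comparison of normalized Seifert invariants and then invoke Theorem~\ref{thm:primepi}. By the reductions preceding this theorem we may assume we are given finite presentations of groups $\pi_1,\pi_2$, each of which is the fundamental group of some closed, orientable, irreducible, small Seifert fibered $3$-manifold with infinite fundamental group.

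I would first enumerate a recursively enumerable family $\mathcal R$ consisting of the standard Seifert presentations
\[ \Bigl\langle x_1,x_2,x_3,h \,\Big|\, [h,x_1],[h,x_2],[h,x_3],\ x_1^{a_1}h^{b_1},\ x_2^{a_2}h^{b_2},\ x_3^{a_3}h^{b_3},\ x_1x_2x_3h^{b_0} \Bigr\rangle, \]
tagged with the corresponding Seifert tuple $\sigma=\{b_0;(a_1,b_1),(a_2,b_2),(a_3,b_3)\}$, as $\sigma$ ranges over normalized tuples corresponding to closed, orientable, small Seifert fibered manifolds $N_\sigma$ with infinite fundamental group. The latter condition is computable, since it depends only on the orbifold Euler characteristic of the base $2$-sphere with cone points of orders $a_i$ and on the rational Euler number of the fibration. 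For each input $\pi_k$ (with $k=1,2$) I then apply Lemma~\ref{lem:findiso, rec enum} to the singleton $\{\pi_k\}$ and to $\mathcal R$: by hypothesis some element of $\mathcal R$ is isomorphic to $\pi_k$, so the procedure terminates and outputs a Seifert tuple $\sigma_k$ with $\pi_1 N_{\sigma_k}\cong\pi_k$.

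Since $N_{\sigma_1}$ and $N_{\sigma_2}$ are closed, orientable, prime, and non-spherical, Theorem~\ref{thm:primepi} yields $\pi_1\cong\pi_2$ if and only if $N_{\sigma_1}\cong N_{\sigma_2}$. The latter question is decided from $\sigma_1,\sigma_2$ by the classical homeomorphism classification of orientable Seifert fibered spaces: two normalized Seifert tuples yield homeomorphic manifolds if and only if one can be obtained from the other by a permutation of the exceptional pairs $(a_i,b_i)$ together with the admissible moves $b_i\mapsto b_i+k_ia_i$, $b_0\mapsto b_0-\sum_i k_i$, and possibly a simultaneous sign change of all $b$-invariants (which corresponds to an orientation-reversing homeomorphism). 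The main obstacle will be the matching step: Lemma~\ref{lem:findiso, rec enum} proceeds by blindly enumerating morphisms and gives no effective bound on termination. A more direct approach, extracting the Seifert invariants of $\pi_k$ by identifying the infinite cyclic central fibre subgroup and analysing the resulting triangle quotient, could improve complexity, but enumeration suffices for mere decidability.
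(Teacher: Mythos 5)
Your argument is essentially correct, but it takes a genuinely different route from the one the paper follows. The paper's proof is a citation of \cite{sela_isomorphism_1995}, pp.~280--281, where the argument is purely group-theoretic: such a group is a central extension of a Euclidean or hyperbolic triangle group by $\Z$, and the problem quickly reduces to the Isomorphism Problem for triangle groups (alternatively one can invoke Dahmani--Guirardel for hyperbolic groups with torsion). You instead pass back through topology: enumerate tagged Seifert presentations, match each input group to a Seifert tuple by the brute-force search of Lemma~\ref{lem:findiso, rec enum}, convert the isomorphism question into a homeomorphism question via Theorem~\ref{thm:primepi} (which in this case rests on Scott's ``no fake Seifert fibre spaces with infinite $\pi_1$'' theorem), and finish with the arithmetic classification of Seifert invariants. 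What your route buys is self-containedness relative to this survey---every ingredient except the Seifert classification is already quoted here, and the rigidity theorem does the work that Sela's group-theoretic analysis does---at the cost of an enumeration step with no complexity bound, as you acknowledge; Sela's reduction is more direct and is the one that generalizes.

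One point you must make explicit in the final step: the equivalence between ``same normalized invariants up to your listed moves'' and ``homeomorphic'' is the classification of Seifert fibered spaces up to \emph{fibre-preserving} homeomorphism, so the ``only if'' direction needs the uniqueness of the Seifert fibration on these manifolds. This does hold here: among closed orientable Seifert fibered spaces with infinite fundamental group, the only ones admitting inequivalent fibrations are $S^1\times S^2$ and the manifold fibering over both the Klein bottle and $S^2(2,2,2,2)$, neither of which is an irreducible small Seifert fibered space. But this classical fact must be cited for your ``if and only if'' to be legitimate; as written, the step conflates fibration-isomorphism with homeomorphism.
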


\noindent
The proof of this theorem is the content of \cite[pp.~280--281]{sela_isomorphism_1995}.  The problem quickly reduces to the Isomorphism Problem for triangle groups. Using more heavy machinery, one could instead invoke Dahmani--Guirardel's solution to the Isomorphism Problem for word-hyperbolic groups with torsion \cite{dahmani_isomorphism_2011}.

\begin{theorem}[Sela]\label{thm: IP for hyp manifolds}
The Isomorphism Problem is solvable for fundamental groups of closed, orientable hyperbolic $3$-manifolds.
\end{theorem}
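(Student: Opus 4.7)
The plan is to reduce the Isomorphism Problem to the Homeomorphism Problem for closed, orientable, hyperbolic $3$-manifolds via Mostow--Prasad rigidity. Given finite presentations of groups $\pi,\pi'$ that are fundamental groups of closed orientable hyperbolic $3$-manifolds, I would first use Lemma~\ref{lem: Groups to closed $3$-manifolds} to produce triangulations of closed orientable $3$-manifolds $N,N'$ realizing these fundamental groups. I would algorithmically verify that each is hyperbolic by computing the prime and JSJ decompositions (Theorems~\ref{thm:findprimedecomposition} and~\ref{thm:findjsjdecomposition}), checking the pieces are not Seifert fibered (Theorem~\ref{thm:isitsfs}), and concluding hyperbolicity via Geometrization. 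By Theorem~\ref{thm:primepi}(1), since closed hyperbolic $3$-manifolds are neither spherical nor lens spaces, $\pi\cong\pi'$ if and only if $N\cong N'$, so the task reduces to the Homeomorphism Problem for $N,N'$.

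To decide whether $N\cong N'$, I would run two semi-decision procedures in parallel. The positive side enumerates candidate simplicial homeomorphisms (equivalently, candidate isomorphisms of fundamental groups, as in Lemma~\ref{lem:findiso}, which by Theorem~\ref{thm:primepi}(2) are always induced by homeomorphisms); this process terminates if $N\cong N'$. The negative side uses the Virtually Compact Special Theorem (Theorem~\ref{thm:akmw}) together with Agol's virtual fibering theorem to produce, via naive Reidemeister--Schreier enumeration of finite-index normal subgroups, Haken regular finite covers $\tilde N\to N$ and $\tilde N'\to N'$. One observes that $N\cong N'$ if and only if there exist such regular covers admitting a homeomorphism equivariant with respect to the deck transformations. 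Using the solved Homeomorphism Problem for Haken manifolds (Haken--Waldhausen--Hemion--Matveev), together with its enhancement to enumerate the mapping class group, one can test for the existence of equivariant homeomorphisms between any given pair of candidate Haken covers.

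The principal obstacle is making the negative semi-decision procedure actually terminate when $N\not\cong N'$: one needs an effective bound on the index of the relevant covers beyond which non-existence of an equivariant homeomorphism certifies non-homeomorphism of the base manifolds. A conceptually cleaner route that sidesteps this difficulty is to invoke Sela's general solution of the Isomorphism Problem for torsion-free word-hyperbolic groups \cite{sela_isomorphism_1995}. Indeed, the fundamental group of a closed orientable hyperbolic $3$-manifold is torsion-free (it acts freely on $\Hyp^3$) and word-hyperbolic (it acts geometrically on the Gromov-hyperbolic space $\Hyp^3$), and word-hyperbolicity of a given finite presentation can be algorithmically certified via Manning's or Papasoglu's algorithm. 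Sela's algorithm, built on canonical JSJ decompositions of torsion-free hyperbolic groups and shortening arguments, then resolves the problem directly; the main work then shifts to ensuring that the input presentation has been put into a form to which Sela's machinery applies, which is routine once hyperbolicity has been certified.
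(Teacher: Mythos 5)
Your final route---certifying that the given presentations define torsion-free word-hyperbolic groups and then invoking Sela's solution of the Isomorphism Problem for torsion-free word-hyperbolic groups---is exactly the paper's proof, which records the theorem as an immediate corollary of the main result of \cite{sela_isomorphism_1995} (with Scott--Short \cite{SS12} noted as an alternative via Manning's algorithm). Your first, cover-based reduction to the Homeomorphism Problem stalls at precisely the gap you identify yourself (no effective bound making the negative semi-decision procedure terminate), but since you discard that route in favor of the Sela citation, the proposal as a whole coincides with the paper's argument.
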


\noindent
This is an immediate corollary of the main theorem of \cite{sela_isomorphism_1995}.  Scott and Short \cite{SS12} gave an alternative proof using Manning's algorithm \cite{manning_algorithmic_2002}.

\medskip
\noindent
This completes the proof of Theorem~\ref{thm: IP}.

\subsection{The Homeomorphism Problem}\label{section:homeomorphismproblem}

In several places in the literature, it is stated that an affirmative solution to the Homeomorphism Problem for all closed, orientable $3$-manifolds follows from the Geometrization Theorem and the aforementioned work of Sela \cite{sela_isomorphism_1995}. In fact, we have:

\begin{theorem}\label{thm:homeomorphism}
The Homeomorphism Problem is solvable for orientable, irreducible 
$3$-manifolds  with only incompressible boundary components.
\end{theorem}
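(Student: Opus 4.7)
\medskip
\noindent
The plan is to reduce the Homeomorphism Problem for orientable irreducible $3$-manifolds $N_1, N_2$ with incompressible boundary first to either the closed case or the Haken case, and then to dispatch the closed non-Haken case via Thurston geometry. First I would deal with $2$-sphere boundary components: from the triangulation read off the components of $\partial N_i$, and cap off every $2$-sphere component by attaching a $3$-ball to produce $\hat N_i$. Capping preserves orientability and irreducibility, and the remaining boundary of $\hat N_i$ consists only of positive-genus components and hence remains incompressible. Since in a connected closed $3$-manifold any two disjoint collections of $k$ balls are ambient isotopic, $N_1 \cong N_2$ if and only if $\hat N_1 \cong \hat N_2$ and the numbers of $2$-sphere boundary components of $N_1$ and $N_2$ agree, reducing the problem to the Homeomorphism Problem for $\hat N_1$ and $\hat N_2$. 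In particular, whenever $\partial \hat N_i \neq \varnothing$ the manifold $\hat N_i$ contains an incompressible surface that is neither $D^2$ nor $S^2$, and so is Haken.

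Next, for any closed $\hat N_i$ apply the Jaco--Oertel algorithm to test Hakenness. If exactly one of $\hat N_1, \hat N_2$ is Haken, they are not homeomorphic. If both are Haken, apply the Haken--Waldhausen--Hemion--Matveev algorithm to decide homeomorphism directly. This leaves the case where both $\hat N_i$ are closed, orientable, irreducible, and non-Haken.

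For such $\hat N_i$ I would use the algorithm of Section~\ref{section:solutions} based on Geometrization to determine whether each is spherical, small Seifert fibered with infinite fundamental group, or hyperbolic; if the geometries differ, the manifolds are not homeomorphic. In the hyperbolic case and in the small Seifert case with infinite $\pi_1$, Theorem~\ref{thm:primepi}(2) yields that $\hat N_1 \cong \hat N_2$ if and only if $\pi_1 \hat N_1 \cong \pi_1 \hat N_2$, so extracting finite presentations from the triangulations and invoking Theorem~\ref{thm: IP} (Sela's solution to the Isomorphism Problem) finishes these cases.

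The main obstacle is the spherical case, since lens spaces are not determined by their fundamental groups (famously $L(7,1) \not\cong L(7,2)$). Both fundamental groups $\pi_1 \hat N_i$ are now finite, so the Isomorphism Problem for them is solvable by brute force, for instance via Todd--Coxeter and comparison of multiplication tables. For non-lens-space spherical manifolds, Theorem~\ref{thm:primepi}(1) still reduces homeomorphism to $\pi_1$-isomorphism; since a closed spherical manifold is a lens space iff its fundamental group is cyclic, this dispatches all subcases except the one in which both $\hat N_i$ are lens spaces. For that residual case I would invoke the classical Reidemeister--Franz classification: $L(p,q) \cong L(p,q')$ iff $q' \equiv \pm q^{\pm 1} \pmod{p}$. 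Compute $p = |H_1(\hat N_i;\Z)|$ directly from the triangulation, and recover $q$ modulo the stated equivalence from the Reidemeister torsion of the cellular chain complex of the triangulation, or alternatively by algorithmically producing a genus-one Heegaard splitting. Comparing the resulting pairs $(p,q)$ then decides homeomorphism.
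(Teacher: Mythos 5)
Your argument is correct and reaches the same conclusion, but it is organized rather differently from the paper's proof, so a comparison is worthwhile. The paper splits on the first Betti number: if $b_1N\geq 1$, incompressibility of the boundary forces $N$ to be Haken and the Haken--Hemion--Matveev algorithm applies; if $b_1N=0$, irreducibility forces $N$ to be a $3$-ball or closed, and in the closed case the paper simply invokes Theorem~\ref{thm: IP} together with Theorem~\ref{thm:primepi}, which in one stroke reduces everything to the lens-space ambiguity --- no Hakenness test and no geometric trichotomy are needed at that stage. You instead cap off spherical boundary components (a step that is essentially vacuous here, since an irreducible $3$-manifold with a spherical boundary component is already a $3$-ball --- this is what the paper observes directly), then run Jaco--Oertel on the closed pieces and treat closed Haken manifolds with $b_1=0$ (e.g.\ toroidal homology spheres) via the Haken algorithm rather than via group theory, reserving the geometric trichotomy for the non-Haken case. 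In effect you re-derive inline the portion of the proof of Theorem~\ref{thm: IP} that the paper uses as a black box; this costs you an extra appeal to Jaco--Oertel but makes the role of geometrization and of Mostow rigidity more explicit. Your treatment of the residual lens-space case via Reidemeister torsion is exactly the paper's stated alternative (its primary method is a brute-force PL-isomorphism search against the finite list of triangulated lens spaces with the given fundamental group, justified by Moise's theorem); either works, though your parenthetical suggestion that one can ``algorithmically produce a genus-one Heegaard splitting'' is the least substantiated step in your write-up and is best dropped in favor of the torsion computation.
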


\noindent
Note that by Theorem \ref{thm:moise}, two $3$-manifolds are diffeomorphic if and only if they are homeomorphic.
A solution to the Homeomorphism Problem is thus also  a solution to the Diffeomorphism Problem.

\begin{proof}
Let $N$ and $N'$ be  orientable, irreducible $3$-manifolds.
It is well-known how to compute the first Betti numbers $b_1 N$, $b_1 N'$ of $N$ respectively~$N'$. 

First assume that $b_1N\geq 1$ or $b_1N'\geq 1$, say $b_1N\geq 1$.  Evidently $N'$ is homeomorphic to $N$ only if $b_1N'\geq 1$,
so we may also assume $b_1N'\geq 1$.
Since the boundary components  of $N$ and $N'$ are incompressible it follows from a  standard argument (see, e.g., \cite[(C.18)]{AFW12}) that $N$ and $N'$ are Haken. 
A solution to the Homeomorphism Problem for Haken manifolds has been given by Matveev~\cite[Theorem~6.1.1]{Mat03}, building on earlier work of Hemion~\cite{Hen79} and Haken~\cite{haken_homoomorphieproblem_1962}. 

We now suppose that $b_1N=b_1N'=0$. Since $N$ is irreducible it follows that 
$N$ is either the $3$-ball or closed, and similarly for $N'$. Since connected closed orientable surfaces are classified by their Euler characteristics one can easily determine whether the boundaries of $N$ and $N'$ are homeomorphic to~$S^2$. Hence we can now restrict  to the case that $N$ and $N'$ are closed.

By Theorem~\ref{thm: IP} we can determine whether $\pi_1N$ and $\pi_1N'$ are isomorphic.
Theorem~\ref{thm:primepi} implies that if they are, then $N$ and $N'$ are either homeomorphic or they are both lens spaces.
Suppose that $N$ and $N'$ are lens spaces with fundamental group~$\Z/p\Z$. 
By the classification of lens spaces we  can provide a complete, necessarily finite, list of pairwise non-homeomorphic triangulated lens spaces $L_1,\dots,L_k$ with fundamental group $\Z/p\Z$. Since $N$ and $N'$ are PL-isomorphic to precisely one of the $L_i$ we can now determine whether or not $N$ and $N'$ are homeomorphic. Alternatively, one can use Reidemeister's original approach~\cite{Re35} via the torsion invariant to determine whether $N$ and $N'$ are homeomorphic. 
\end{proof}

\noindent
We do not doubt that the hypotheses of Theorem~\ref{thm:homeomorphism} can be relaxed; in particular, most experts agree that a solution to the Homeomorphism Problem among closed, orientable 3-manifolds that are not necessarily irreducible is within the reach of current techniques.  However, the details of such an algorithm do not, as far as we know, appear in the literature.\medskip

\noindent
In order to deal with the reducible case, one needs to address the \emph{oriented} nature of the uniqueness part of the Kneser--Milnor decomposition.  Specifically, a proof of the following statement is required:

\begin{quote}
\emph{There is an algorithm that determines whether   a closed, orientable, irreducible $3$-manifold admits an orienta\-tion-re\-ver\-sing self-homeomorphism.}
\end{quote}
\noindent
In the case of a hyperbolic manifold $M$ of finite volume, this fact can be deduced from published results.  By Mostow Rigidity, any automorphism of~$\pi_1M$ is induced by  a self-homeomorphism.   The extension of  Sela's results by Dahmani--Groves \cite{dahmani_isomorphism_2008} makes it possible to list the elements $\{\alpha_i\}$ of the (finite) outer automorphism group of $\pi_1M$, and computing the action on~$H_3$ determines whether or not each $\alpha_i$ reverses orientation.  Alternatively, one can attempt to modify the Scott--Short algorithm~\cite{SS12} (cf.~\cite{Kupb}).\medskip

\noindent
In order to complete the solution to the Homeomorphism Problem in the closed case, one needs to analyze the remaining Seifert fibered and Haken cases.  Perhaps a variation of the Haken--Hemion--Matveev algorithm \cite[Theorem~6.1.1]{Mat03} can be used to solve the Oriented Homeomorphism Problem in the Haken case.  Alternatively, using the canonical properties of the JSJ decomposition, it should be possible to reduce the question to the hyperbolic and Seifert fibered cases.\medskip

\noindent
In our opinion, a detailed solution of the above problem would be a great service to the community, and fill an important gap in the literature.\medskip

\noindent
Another problem that has attracted a lot of attention is the problem of deciding whether a given $3$-manifold is homeomorphic to a particular kind of $3$-manifold (e.g.,\ spheres, handlebodies etc.).  For example,
Rubinstein~\cite{Ru95} gave an algorithm that determines whether a  $3$-manifold is homeomorphic to a $3$-sphere, whose  correctness was shown in \cite{Tho94}.
Later, Ivanov~\cite{Iv08} and Schleimer~\cite{Schl11} showed that this  problem is the complexity class~NP, that is, decidable in nondeterministic polynomial time (in size of the triangulation). In his paper~\cite{Iv08}, Ivanov also gave another proof of the result of Hass--Lagarias--Pippenger~\cite{HLP} that the problem of detecting 
whether a knot (represented by a triangulation of its complement) is
the unknot, which 
was first shown to be decidable by Haken~\cite{haken_normalflaechen}, is in NP.
Kuperberg~\cite{Kuperberg}, using a theorem of Koiran~\cite{Koiran}, has shown that modulo the Generalized Riemann Hypothesis,  unknot detection is
also in co-NP, so (assuming also standard conjectures in complexity theory) not NP-hard.
For some NP-complete decision problems in knot theory, see \cite{AHT}.

\section{Open Problems}\label{sec:open problems}

\noindent
We conclude this survey paper with a list of problems and conjectures.
As we saw in Theorem \ref{thm: IP}, the 
Isomorphism Problem for the class of fundamental groups of closed, orientable $3$-manifolds has an algorithmic solution. It is  natural to ask whether the restriction to closed $3$-manifolds is necessary.
 
\begin{question}
Is there an algorithm which determines whether two fundamental groups of $3$-manifolds are isomorphic?
\end{question}

\noindent
The Equation Problem asks for a solution to the problem whether any set of `equations' over a group has a solution. It generalizes both the Word Problem and the Conjugacy Problem, and has been solved for torsion-free hyperbolic groups by Makanin \cite{Mak82} and  Rips--Sela \cite{RS98}, for hyperbolic groups with torsion by Dahmani--Guirardel \cite{dahmani_foliations_2010} and for fundamental groups of Seifert fibered spaces by Liang \cite{Li14}.  The following question thus arises.

\begin{question}
Is the Equation Problem solvable for the fundamental group of any $3$-manifold?
\end{question} 

\noindent
Even more ambitiously, one may ask about the decidability of the full elementary theory of each $3$-manifold group viewed as a structure in the language of groups, in the sense of model theory; see \cite[Example~1.2.5]{marker}.

\begin{question}
Let $\pi$ be a $3$-manifold group.
Is the first-order theory of $\pi$ decidable?
\end{question}

\noindent
See \cite{km06, km13} for work on this problem  in the case where $\pi$ is a free group respectively a
torsion-free hyperbolic group, but see also \cite{km14,se14}. 
 


\medskip
\noindent
Finally, we return to topological decision problems. A {\it $3$-manifold pair}\/ is a pair $(N,S)$ where $N$ is a $3$-manifold and $S$ is a subsurface in $\partial N$. For example, let $L=L_1\cup\dots\cup L_n$ be an oriented link in a $3$-manifold $Y$. Pick a tubular neighborhood $\nu L$ in $Y$
and   denote by $\mu_1,\dots,\mu_n$ the meridians of $L_1,\dots,L_n$, respectively.
The diffeomorphism class of~$(Y,L)$ is then  determined by the 
diffeomorphism class of the $3$-manifold pair $(Y\setminus \nu L,\mu_1\times I\cup \dots\cup \mu_n\times I)$. Sutured manifolds give naturally rise to $3$-manifold pairs. 

\begin{question}
Is there a solution to the Homeomorphism Problem for $3$-manifold pairs?
\end{question}

\end{document}